\documentclass[11pt]{amsart}
\usepackage{amssymb}
\usepackage{tikz-cd}
\usetikzlibrary{decorations.pathmorphing}

\usepackage{amsrefs}

\addtolength{\textwidth}{2.3cm}

\addtolength{\textheight}{2cm}
   
\addtolength{\voffset}{-1cm}

\addtolength{\hoffset}{-1.15cm}

\title{Aspects of automatic continuity}

\author {Christian Rosendal and Luis Carlos Suarez}
\address{Department of Mathematics\\University of Maryland\\4176 Campus Drive - William E. Kirwan Hall\\College Park, MD 20742-4015\\USA}
\email{rosendal@umd.edu}

\urladdr{https://sites.google.com/view/christian-rosendal/}

\email{lcsuarez@umd.edu}

\newcommand{\norm}[1]{\lVert#1\rVert}

\newcommand{\acts}[1]{\mathop{\overset{#1}\curvearrowright}}

\newcommand{\mgd}[2]{\{  {#1}\;|\; {#2} \} }
\newcommand{\Mgd}[2]{\big\{  {#1}\;\big|\; {#2} \big\} }
\newcommand{\MGD}[2]{\Big\{  {#1}\;\Big|\; {#2} \Big\} }


\newcommand{\forkindep}[1][]{\mathop{\mathop{\vcenter{\hbox{\oalign{\noalign{\kern-.3ex}
\hfil$\vert$\hfil\cr\noalign{\kern-.7ex}$\smile$\cr\noalign{\kern-.3ex}}}}}\displaylimits_{#1}}}

\newcommand{\maps}[1]{\mathop{\overset{#1}\longrightarrow}}

\newcommand{\maths}[1]{\[\begin{split}{#1}\end{split}\]}

\newcommand {\N}{\mathbb N}
\newcommand {\Z}{\mathbb Z}
\newcommand {\Q}{\mathbb Q}
\newcommand {\R}{\mathbb R}
\newcommand {\C}{\mathbb C}

\newcommand {\U}{\mathbb U}

\newcommand{\normal}{\vartriangleleft}
\newcommand{\om}{\omega}
\newcommand{\eps}{\epsilon}

\newcommand{\iso}{\cong}

\newcommand{\tom} {\emptyset}

\newcommand{\saa}{\Rightarrow}
\newcommand{\equi}{\Leftrightarrow}

\newcommand {\del}{ \; \big| \;}

\newcommand {\go} {\mathfrak}
\newcommand {\ku} {\mathcal}

\newcommand{\ov}{\overline}
\newcommand{\inv}{^{-1}}

\newtheorem{thm}{Theorem}
\newtheorem{cor}[thm]{Corollary}
\newtheorem{lemme}[thm]{Lemma}

\newtheorem{prop} [thm] {Proposition}
\newtheorem{conj} [thm] {Conjecture}

\newtheorem{prob}[thm]{Problem}

\newtheorem{claim}[thm] {Claim}

\theoremstyle{definition}
\newtheorem{defi}[thm]{Definition}

\newtheorem{exa}[thm]{Example}

\definecolor{groen}{rgb}{0,0.5,.7}
\definecolor{gul}{rgb}{0.94,0.8,0}
\definecolor{blaa}{rgb}{0.16,0,0.6}
\definecolor{roed}{rgb}{1,0,0}

\makeindex

\thanks{The authors acknowledge  support by the U.S.~National Science Foundation under Award Numbers DMS-2246986 and DMS-2204849.  Thanks are also due to Paul Larson, Sang-hyun Kim and  Jindra Zapletal for interesting comments and helpful conversations.}

\begin{document}
\subjclass[2010]{Primary: 22A05, Secondary: 03E15, 03E25}
\keywords{Polish groups, automatic continuity, universally measurable sets, homeomorphism groups, isometry groups, consequences of the axiom of choice}

\maketitle

\begin{abstract}
A general overview of the phenomenon of automatic continuity of homomorphisms between Polish groups is given. In particular, we study variants and improvements of the closed graph theorem, applying these to the problem of continuity of universally measurable homomorphisms and also to gauge the amount of choice needed to construct discontinuous homomorphisms between Polish groups. Furthermore, we provide a simple proof of automatic continuity in the context of homeomorphism groups of compact manifolds and a complete reworking of automatic continuity theory in the context of isometry groups of highly homogeneous complete metric structures.
\end{abstract}

\section{Introduction}
The present paper provides an overview of various new directions in automatic continuity theory. More concretely, the general problem considered here is the conditions under which a homomorphism between two topological groups or, more specifically, Polish groups is continuous. To set the stage, let us recall that a {\em Polish group} is a topological group whose underlying topology is {\em Polish}, that is, separable and completely metrisable. These groups provide a useful generalisation of the class of locally compact, second countable groups, and include most of the classically studied topological transformation groups such as Lie groups, homeomorphism and diffeomorphism groups of manifolds, isometry groups of separable complete metric spaces, automorphism groups of countable discrete structures, and totally disconnected locally compact groups. 

Our main issue here is one of topological rigidity. Namely, what are the conditions on Polish groups $G$ and $H$ that ensure that every homomorphism 
$$
G\maps \pi H
$$
is continuous? Or what abstract conditions on homomorphisms $\pi$ between Polish groups ensure that $\pi$ is continuous? There are several other related concepts of topological rigidity studied in the literature such as algebraic simplicity,  uniqueness and minimality of Polish group topologies, and the Bergman property  (also known as uncountable strong cofinality). However, with the exception of Section \ref{sec:quadri}, where small index properties are compared with automatic continuity, and Section \ref{section:isometry}, where we discuss isometry groups of complete metric structures, we will not be substantially concerned with any of these, although several of the techniques developed here are applicable to these other concepts as well.

So what is the interest of automatic continuity or of any of the other manifestations of topological rigidity? Simply put, it allows one to deal with various topological-algebraic objects without regard to their topological structure and thus occasionally retain a simple algebraic theory that might otherwise require more complex assumptions. To take a particular illustrative example, for a general group $G$, let 
$$
{\sf Hom}(G,\Z)=\{\phi \colon G\to \Z\del \phi \text{ is a group homomorphism }\}
$$
and observe that ${\sf Hom}(G,\Z)$ is itself an abelian group under the operation defined by $(\phi+\psi)(g)=\phi(g)+\psi(g)$. Let also $\prod_{n=1}^\infty \Z$ be full direct product of infinitely many copies of $\Z$ viewed as the set of integer valued sequences with coordinatewise addition and let $\bigoplus_{n=1}^\infty \Z$ be the subgroup of $\prod_{n=1}^\infty \Z$ consisting of finitely supported such sequences. It is now easy to see that
$$
{\sf Hom}\Bigg(\bigoplus_{n=1}^\infty \Z\,,\,\Z\Bigg)=\prod_{n=1}^\infty \Z,
$$
as every homomorphism $\phi\colon\bigoplus_{n=1}^\infty \Z\to\Z$ has the form 
$$
\phi(a_1,a_2,\ldots)=\sum_{n=1}^\infty z_na_n
$$
for some sequence $(z_n)\in \prod_{n=1}^\infty \Z$. What is remarkable however is that, conversely,
$$
{\sf Hom}\Bigg(\prod_{n=1}^\infty \Z\,,\,\Z\Bigg)=\bigoplus_{n=1}^\infty \Z.
$$
This fact is due to E. Specker \cite{specker}, who showed that every homomorphism  $$\psi\colon\prod_{n=1}^\infty \Z\to\Z$$
only depends on finitely many coordinates or, equivalently, is continuous with respect to the discrete topology on $\Z$ and the Tychonoff product topology on $\prod_{n=1}^\infty \Z$. In other words, $\psi$ necessarily has the form
 $$
\psi(z_1,z_2,\ldots)=\sum_{n=1}^\infty a_nz_n
$$
for some  sequence $(a_n)\in \bigoplus_{n=1}^\infty \Z$. 

For another example,  consider the homomorphism 
$$
{\sf Homeo}({\mathbb D^2})\maps{\rho}{\sf Homeo}(\mathbb S^1)
$$
induced by the restriction  of a homeomorphism of the closed disk ${\mathbb D^2}$ to the boundary $\mathbb S^1$. It is easy to conversely construct an inverse $\varepsilon$ to $\rho$, that is, an extension homomorphism
$$
{\sf Homeo}(\mathbb S^1)\maps{\varepsilon}{\sf Homeo}({\mathbb D^2}), 
$$
but D. Epstein and V. Markovic \cite{markovic} asked if all such extension homomorphisms are necessarily continuous. As it turns out, much more is true, since all homomorphisms from ${\sf Homeo}(\mathbb S^1)$ with values in a separable topological group are continuous \cite{ros-sol}. This latter result along with its generalisation to homeomorphism groups of all compact manifolds in \cites{rosendal-israel,mann} also provide the basis for the explicit description of actions of the identity component ${\sf Homeo}_0(M)$ of the homeomorphism group of a topological manifold $M$ on another manifold $N$ given by L. Chen and K. Mann in \cite{chen-mann}.

The organisation of our paper is as follows. Section \ref{sec:closed graph} provides an account of the many facets of the closed graph theorem, a result that provides a weak sufficient condition for the continuity of a homomorphism. The analysis of this classical result also leads to the introduction of a {\em characteristic group} $N$ associated with any given homomorphism $G\maps \pi H$ between Polish groups, whose size is a measure of the discontinuity of $\pi$. In Section \ref{sec:christensen}, the characteristic group is put to use in the context of an old problem of J.P.R. Christensen \cite{christensen} concerning the continuity of universally measurable homomorphisms between Polish groups. We present the positive solution to this problem from \cite{rosendal-Pi} and also the further developments from \cite{banakh2022automatic}. As is well-known from results in set theory, some usage of the axiom of choice is needed in order to produce discontinuous homomorphisms between Polish groups, and in Section \ref{sec:quadri} we present a quadrichotomy that in combinatorial terms calibrates the exact amount of choice needed to produce homomorphisms with varying degrees of discontinuity as measured by the size of the associated characteristic group. In Section \ref{sec:homeo} we provide a greatly simplified proof of the automatic continuity results of \cite{ros-sol, rosendal-israel,mann} on homeomorphism groups of compact manifolds.  Similarly, in Section \ref{section:isometry} we provide a common framework due to M. Sabok \cite{sabok} for dealing with automatic continuity in the automorphism groups of highly homogeneous metric structures. Whereas our account relies heavily on the work of Sabok along with some simplifications introduced in \cite{Malicki}, we are able to discard of much excessive baggage  and give what is hopefully a more accessible and transparent exposition of these results. In particular, Corollary \ref{cor:autconti} provides a single and easily verifiable criterion for automatic continuity in this context. In turn, this criterion is applied to the isometry group of the Urysohn metric space in Section \ref{section:isometry}, providing an alternative proof of a result of Sabok \cite{sabok}, the automorphism group of the standard measure algebra in Section \ref{sec:meas}, recovering a result of I. Ben Yaacov, A. Berenstein and J. Melleray \cite{BBM}, and finally we use our methods to also give a proof for the unitary group of infinite-dimensional separable Hilbert space in Section \ref{sec:hilbert}, which is originally due to T. Tsankov \cite{tsankov}.

The paper \cite{rosendal-BSL} provided an account of the theory as it was about a decade and a half ago and we have largely avoided any overlap with that paper, except for the cases where progress has been made on a specific topic since  (for example, regarding universally measurable homomorphisms). The reader seeking a more complete account of the subject may thus also consult \cite{rosendal-BSL}, for example, regarding constructions of discontinuous homomorphisms, measurability assumptions and ample generics.  Finally, it should be stressed that a certain selection of topics has been made mostly having to do the  authors' specific interests and expertise. The literature is much richer than can be conveyed in a short account such as this. In particular, we are leaving out a good deal of recent literature dealing with identifying discrete groups $\Gamma$ so that every homomorphism $G\maps\pi \Gamma$ from a locally compact or completely metrisable topological group $G$ into $\Gamma$ is continuous. For example,  \cite{dudley,alperin,slutsky,bogopolski} contain some of the many highlights in this direction.

\section{Open and closed mapping theorem}\label{sec:closed graph}
If $X$ is a topological space, a subset $A\subseteq X$ is said to be {\em meagre} provided that $A=\bigcup_{n=1}^\infty F_n$ for some sequence of nowhere dense subsets $F_n\subseteq X$. Also, a set $B\subseteq X$ is {\em comeagre} provided that $A=X\setminus B$ is meagre or, equivalently, $B$ contains the intersection $\bigcap_{n=1}^\infty U_n$ of  dense open subsets $U_n\subseteq X$. Recall also that a topological space $X$ is said to be a {\em Baire} space provided that $X$ satisfies the following two equivalent conditions.
\begin{itemize}
\item Meagre subsets of $X$  have empty interior,
\item every comeagre set is dense in $X$.
\end{itemize}
Finally, a subset $A\subseteq X$ is said to have the {\em property of Baire} or to be {\em Baire measurable} if there is an open set $U\subseteq X$ so that $A\triangle U$ is meagre. To discuss this property, it is useful to introduce a bit of notation. Namely, for a subset $A$ of a topological space $X$, we let 
$$
U(A)=\bigcup\{ U \subseteq X\del U \text{ is open and $U\setminus A$ is meagre}\}.
$$
Then $U(A)\setminus A$ is also meagre \cite[Theorem 8.29]{kechris-book}. Observe also that the operator $U(\cdot)$ is determined by the topology of $X$ and therefore commutes with homeomorphisms, that is, if $f\colon X\to Y$ is a homeomorphism and $A\subseteq X$, then $U(f[A])=f[U(A)]$.

When dealing with topological groups, being a Baire space simply reduces to non-meagreness of the whole space.

\begin{lemme}
A topological group $G$ is a Baire space if and only if $G$ is non-meagre.
\end{lemme}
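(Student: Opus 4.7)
My plan is to prove the nontrivial direction by contrapositive, exploiting translation-invariance of both the topology and the $\sigma$-ideal of meagre sets. The ``only if'' direction is immediate: a nonempty Baire space is a nonempty open subset of itself, and so cannot be meagre.

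For ``if'', suppose $G$ is not Baire. Then some meagre set has nonempty interior, and taking that interior yields a nonempty open meagre set $V$; after left-translating we may assume $e\in V$. Since left multiplication by any $g\in G$ is a self-homeomorphism of $G$, each translate $gV$ is again a nonempty open meagre set, and a neighbourhood of $g$. (One can alternatively produce $V$ via the operator $U(\cdot)$ just introduced: if $A$ is meagre with nonempty interior, then $U(A)$ is a nonempty open set, and $U(A)=(U(A)\cap A)\cup(U(A)\setminus A)$ is meagre since both pieces are.)

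To conclude, I fix a countable dense subset $D\subseteq G$, available in the Polish setting that is the paper's implicit framework, and note that for each $h\in G$ the set $hV^{-1}$ is a nonempty open neighbourhood of $h$ (since $e\in V$), hence meets $D$ at some $d$, whence $h\in dV$. This exhibits $G=\bigcup_{d\in D}dV$ as a countable union of meagre sets, so $G$ is meagre, contradicting the hypothesis. The main point where the group structure intervenes is precisely this promotion of a local open meagre set to a countable meagre cover of $G$; the only nontrivial ingredient there is the existence of a countable dense set, i.e.\ separability, which is free of charge in the Polish context.
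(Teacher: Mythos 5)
Your proof is correct, but it takes a genuinely different route from the paper's, and it is worth noting the difference carefully. You reduce to a \emph{countable} cover $G=\bigcup_{d\in D}dV$ by invoking a countable dense set $D$, and then conclude meagreness of $G$ from countable additivity of the meagre ideal. This is clean but uses separability, which is an extra hypothesis: the lemma as stated is about an arbitrary topological group $G$, not a Polish one. The paper avoids this by exploiting the operator $U(\cdot)$ introduced just above: since $V$ is open, non-empty and meagre, $V\subseteq U(\emptyset)$, and likewise every translate $xV\subseteq U(\emptyset)$; hence the \emph{uncountable} union $G=\bigcup_{x\in G}xV$ is still contained in the single set $U(\emptyset)$, which is meagre by the cited Theorem 8.29 of Kechris. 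In short, the $U(\cdot)$ operator does the work of absorbing arbitrarily many meagre open pieces into one meagre set, so no countability assumption is needed. Your argument buys elementariness at the cost of generality; the paper's buys generality essentially for free given the tool already on the table. Within the Polish setting of this paper your version is entirely adequate, and you correctly flag the separability dependence yourself.

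One minor aside: your parenthetical alternative construction of $V$ via $U(A)$ is slightly off in emphasis. You do not need $A$ meagre with non-empty interior; failure of Baire means some non-empty open set is meagre, and that set already serves as $V$ directly (equivalently, non-Baire means $U(\emptyset)\neq\emptyset$, and $U(\emptyset)$ itself is open and meagre). The detour through a general $A$ is not wrong, but it is heavier than necessary.
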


\begin{proof}
Observe that ${\sf int}\, G=G\neq \tom$ and so, if $G$ is a Baire space, then $G$ cannot itself be meagre. 

Conversely, suppose $G$ fails to be a Baire space. This means that there is a non-empty meagre open set $V\subseteq G$ and thus $V\subseteq U(\emptyset)$.
However, for every $x\in G$, the left translate $xV$ is also meagre and so $xV\subseteq U(\tom)$. We thus conclude that $G=\bigcup_{x\in G}xV\subseteq U(\tom)$, i.e., that $G$ is itself meagre. 
\end{proof}

\begin{lemme}[Pettis' lemma]\label{pettis}
Suppose $G$ is a Baire topological group and $A,B\subseteq G$ are subsets. Then
$$
U(A)\cdot U(B)\subseteq {\sf int}(AB).
$$
\end{lemme}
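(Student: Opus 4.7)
The plan is to argue in two steps: first that $U(A)\cdot U(B)$ is itself an open subset of $G$, and second that $U(A)\cdot U(B)\subseteq AB$. Together these give $U(A)\cdot U(B)\subseteq {\sf int}(AB)$.

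The openness is essentially for free. Since $U(A)$ is open by definition and since, in a topological group, the product $OS=\bigcup_{s\in S}Os$ of an open set $O$ with any set $S$ is a union of translates of an open set and hence open, the product $U(A)\cdot U(B)$ is open in $G$.

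For the containment, fix $g=a_0b_0$ with $a_0\in U(A)$ and $b_0\in U(B)$, and write $W=U(A)\cap gU(B)^{-1}$. This is an intersection of two open sets (the inverse map is a homeomorphism) and is non-empty because $a_0=gb_0^{-1}\in W$. Since $G$ is Baire, $W$ is non-meagre. Now the preliminaries before the lemma give that $U(A)\setminus A$ is meagre, and $gU(B)^{-1}\setminus gB^{-1}$ is the image of the meagre set $U(B)\setminus B$ under the homeomorphism $x\mapsto gx^{-1}$, so it too is meagre. Removing these two meagre sets from $W$ leaves the non-meagre, and in particular non-empty, set $W\cap A\cap gB^{-1}$. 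Picking any $a$ in this intersection, we have $a\in A$ and $a=gb^{-1}$ for some $b\in B$, whence $g=ab\in AB$, as required.

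There is no serious obstacle here: the whole argument is a packaging of the standard Pettis trick with the operator $U(\cdot)$ doing the bookkeeping. The only point worth stressing is the role of Baireness, which ensures that a non-empty open set cannot be swallowed by the union of the two meagre error sets that separate $U(A)$ from $A$ and $gU(B)^{-1}$ from $gB^{-1}$.
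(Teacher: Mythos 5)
Your proof is correct and follows the same route as the paper's: both reduce to showing that the non-empty open set $U(A)\cap gU(B)^{-1}$, being non-meagre by Baireness, still meets $A\cap gB^{-1}$ after the two meagre error sets are discarded, so that $g\in AB$. The paper phrases the second error set via the identity $gU(B)^{-1}=U(gB^{-1})$ (using that $U(\cdot)$ commutes with homeomorphisms), where you instead observe directly that $gU(B)^{-1}\setminus gB^{-1}$ is a homeomorphic image of the meagre set $U(B)\setminus B$ — the same fact, just unpacked.
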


\begin{proof}
We note that if $x\in U(A)U(B)$, then the open set 
$$
V= x U(B)\inv\cap U(A)=U(xB\inv)\cap U(A)
$$ 
is non-empty and so $xB\inv$ and $A$ are comeagre in $V$.
It follows that also $xB\inv \cap A$ is non-meagre in $V$ and therefore is non-empty, whereby $x\in AB$. As $U(A)U(B)$ is itself open, it follows that $U(A)U(B)\subseteq {\sf int}(AB)$. 
\end{proof}

\begin{thm}[Open mapping theorem]
Suppose $G\maps\pi H$ is a continuous epimorphism between Polish groups. Then $\pi$ is an open map.
\end{thm}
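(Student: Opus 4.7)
The plan is to show that $\pi$ sends every open neighbourhood of $e_G$ to a neighbourhood of $e_H$ in $H$; invariance of this property under left translation in both $G$ and $H$ will then immediately upgrade this to openness of $\pi$ on all of $G$. So fix an open neighbourhood $V$ of $e_G$ and, using continuity of multiplication and inversion, choose an open neighbourhood $W$ of $e_G$ with $WW\inv\subseteq V$.

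Next I would run a Baire category argument to locate a non-meagre image set. Since $G$ is Polish, it is separable, and so countably many left translates $(g_nW)_{n\geq 1}$ cover $G$. Applying the surjection $\pi$ yields $H=\bigcup_n \pi(g_n)\pi(W)$. The group $H$, being Polish, is completely metrisable and hence a Baire space; in particular it is non-meagre, so at least one $\pi(g_n)\pi(W)$ must be non-meagre. Since left translation is a self-homeomorphism of $H$, the set $\pi(W)$ itself is then non-meagre. Moreover, $\pi(W)$ is the continuous image of the Polish space $W$, hence analytic and in particular possesses the Baire property, which together with non-meagreness gives $U(\pi(W))\neq\tom$.

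Pettis' lemma then manufactures the required interior. Inversion is a self-homeomorphism of $H$ so, as noted in the paragraph preceding Lemma \ref{pettis}, $U\big(\pi(W)\inv\big)=U(\pi(W))\inv$, and applying Lemma \ref{pettis} to the pair $A=\pi(W)$, $B=\pi(W)\inv=\pi(W\inv)$ yields
$$
U(\pi(W))\cdot U(\pi(W))\inv\subseteq {\sf int}\big(\pi(W)\cdot \pi(W\inv)\big)={\sf int}\big(\pi(WW\inv)\big)\subseteq {\sf int}(\pi(V)).
$$
The left-hand side is non-empty and contains $e_H$ (take $xx\inv$ for any $x\in U(\pi(W))$), so $e_H$ lies in the interior of $\pi(V)$, as desired.

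The main obstacle, insofar as there is one, is converting the soft information ``$\pi(W)$ is non-meagre because its countable translates cover the Baire space $H$'' into the more rigid conclusion $U(\pi(W))\neq\tom$ that is needed to feed productively into Pettis' lemma. This is precisely the point at which analyticity of $\pi(W)$ (and the attendant Baire property) is doing genuine work; once it is in hand, everything else is routine Baire category combined with the already-established Pettis lemma.
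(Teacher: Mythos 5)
Your proof is correct and follows essentially the same route as the paper: cover $G$ by translates of $W$, use Baire category plus analyticity (and hence the Baire property) of $\pi[W]$ to get $U(\pi[W])\neq\tom$, and then feed this into Pettis' lemma to produce interior. The only cosmetic difference is using $WW^{-1}$ rather than $W^{-1}W$, which changes nothing.
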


\begin{proof}
We first show that, if $V$ is an identity neighbourhood in $G$, then $\pi[V]$ is an identity neighbourhood in $H$. To see this, fix $V$ and let $\{x_n\}_{n=1}^\infty$ be a countable dense subset in $G$. Pick also some open identity neighbourhood $W\subseteq G$ so that $W\inv W\subseteq V$. Then $G=\bigcup_{n=1}^\infty x_nW$ and thus
$H=\bigcup_{n=1}^\infty \pi(x_n)\pi[W]$. Because $H$ is not itself meagre, it follows that some  $\pi(x_n)\pi[W]$ is non-meagre and thus that the analytic set $\pi[W]$ is also non-meagre. Being analytic, this means that  $U\big(\pi[W]\big)\neq \tom$ \cite[Theorem 21.6]{kechris-book} and so 
\maths{
1
&\in U\big(\pi[W]\big)\inv \cdot U\big(\pi[W]\big)\\
&=U\big(\pi[W]\inv\big) \cdot U\big(\pi[W]\big)\\
&\subseteq {\sf int}\big(\pi[W]\inv\cdot \pi[W]\big)\\
&\subseteq {\sf int}\big(\pi[W\inv W]\big)\\
&\subseteq {\sf int}\big(\pi[V]\big)
}
by Lemma \ref{pettis}.

Suppose now $O$ is any non-empty open set in $G$. To see that $\pi[O]$ is open, we show that $\pi(x)\in {\sf int}\,\pi[O]$ for all $x\in O$. So let $x\in O$ be given and find an identity neighbourhood $V$ in $G$ so that $xV\subseteq O$. Then $\pi[V]$ is an identity neighbourhood in $H$ and $\pi(x)\pi[V]$ is a neighbourhood of $\pi(x)$ contained in $\pi[O]$, that is, $x\in {\sf int}\, \pi[O]$. 
\end{proof}

\begin{cor}[Closed graph theorem]
Let $G\overset{\pi}{\longrightarrow}H$ be a homomorphism between Polish groups. Then $\pi$ is continuous if and only if the graph $\ku G\pi$ is a closed subset of $G\times H$.
\end{cor}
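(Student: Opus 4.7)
The forward implication is routine: if $\pi$ is continuous, then the map $G\times H\to H$ sending $(x,y)\mapsto \pi(x)y\inv$ is continuous, and $\ku G\pi$ is precisely the preimage of the identity under this map, hence closed in $G\times H$ (using that $H$, being a topological group, is Hausdorff).

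The interesting direction is the converse, and the plan is to reduce it directly to the Open Mapping Theorem proved above. The key observation is that, because $\pi$ is a homomorphism, $\ku G\pi$ is actually a subgroup of the Polish group $G\times H$. Since by hypothesis $\ku G\pi$ is closed in $G\times H$, it inherits a Polish group topology of its own.

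Now consider the two coordinate projections restricted to the graph,
\[
G \xleftarrow{p_G} \ku G\pi \xrightarrow{p_H} H,
\]
both of which are continuous group homomorphisms. The map $p_G$ is moreover a bijection: it is surjective because every $x\in G$ gives the point $(x,\pi(x))\in\ku G\pi$, and injective because $\pi$ is a function of $x$. Thus $p_G$ is a continuous epimorphism between Polish groups, so the Open Mapping Theorem applies and yields that $p_G$ is open, i.e.\ that $p_G\inv\colon G\to \ku G\pi$ is continuous. Composing with the continuous map $p_H$, we obtain
\[
\pi = p_H\circ p_G\inv,
\]
which is continuous, as required.

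There is no real obstacle here beyond the two structural facts that closed subgroups of Polish groups are Polish and that the graph of a homomorphism is itself a subgroup; everything else is a clean application of the Open Mapping Theorem already established.
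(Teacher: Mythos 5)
Your proof is correct and follows essentially the same route as the paper: recognize that $\ku G\pi$ is a closed subgroup of $G\times H$ and hence a Polish group, and apply the Open Mapping Theorem to the projection onto $G$. The only cosmetic difference is that you phrase the conclusion as $\pi = p_H\circ p_G\inv$ with $p_G\inv$ continuous, whereas the paper unwinds the same openness to observe directly that $\phi\big[\ku G\pi\cap(G\times V)\big]=\pi\inv(V)$ is open.
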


\begin{proof}The graph of any continuous map is closed. So suppose instead  that $\ku G\pi$ is closed.
Observe that the graph $\ku G\pi$ is a subgroup of $G\times H$, since
$$
(g,\pi(g))\cdot (f,\pi(f))\inv=(gf\inv, \pi(g)\pi(f)\inv)=(gf\inv ,\pi(gf\inv))\in \ku G\pi
$$
for all $g,f\in G$. Being also closed, $\ku G\pi$ is a Polish group in the topology induced from $G\times H$. Moreover, $\ku G\pi$ is isomorphic with $G$ as an abstract group via the homomorphism $\ku G\pi \maps\phi  G$ defined by $\phi(g,h)=g$. Because $\phi$ is clearly continuous, being also surjective, it is an open mapping and so 
$$
\phi\big[\ku G\pi\cap (G\times V)\big]=\{g\in G\del (g,\pi(g))\in G\times V\}=\pi\inv (V)
$$
is open in $G$ for all open $V\subseteq H$. In other words, $\pi$ is continuous.
\end{proof}

\begin{defi}[The characteristic group of a homomorphism]
Let $G\overset{\pi}{\longrightarrow}H$ be a (possibly discontinuous) group homomorphism between two Polish groups. We define a closed subgroup of $H$, termed the {\em characteristic group} of $\pi$, by the formula
\begin{align*}
    N= \bigcap_V \overline{\pi[V]},
\end{align*}
where the intersection is taken over all identity neighbourhoods $V$ in $G$. Although $N$ depends on the homomorphism $\pi$, to simplify notation, we will simply denote it by $N$. The dependence on $\pi$ will be clear from the context.
\end{defi}
For other descriptions of $N$, observe that an element $h\in H$ belongs to $N$ if and only if
$$
h=\lim_{n}\pi(g_n)
$$
for some sequence $(g_n)$ in $G$ with $\lim_ng_n= 1$. It follows that 
\begin{align*}
N = \big\{h \in H \del (1, h) \in \overline{\ku G\pi}\,\big\}.
\end{align*}
Note also that, if $V$ and $W$ are identity neighbourhoods in $G$ so that $VV\inv \subseteq W$, then 
$$
\ov{\pi[V]}\cdot \ov {\pi[V]}\inv \subseteq \ov{\pi[V]}\cdot \ov {\pi[V\inv ]}\subseteq \ov{\pi[VV\inv ]}\subseteq \ov{\pi[W]}.
$$
This shows that $NN\inv \subseteq N$ and so, as $1\in N$, we see that  $N$ is a subgroup of $H$.

\begin{prop}
Let $G$ and $H$ be Polish groups and $G\maps \pi H$ a homomorphism  with dense image. Then the characteristic group $N$  is the smallest closed normal subgroup of $H$ so that the induced homomorphism $G\maps{\tilde{\pi}}H/N$ is continuous.
\end{prop}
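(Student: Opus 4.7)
My plan is to verify in sequence: (i) that $N$ is normal in $H$, so that $\tilde\pi$ is well-defined as a group homomorphism; (ii) that $\tilde\pi\colon G \to H/N$ is continuous; and (iii) the minimality claim, that any closed normal $M \normal H$ for which $g \mapsto \pi(g)M$ is continuous must contain $N$. I expect step (ii) to be the conceptually most interesting one, but it becomes very short once the right object is identified.

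For (i), I will first check that $\pi(g) N \pi(g)^{-1} \subseteq N$ for every $g \in G$: given $n = \lim_k \pi(g_k)$ with $g_k \to 1$, one has $\pi(g) n \pi(g)^{-1} = \lim_k \pi(g g_k g^{-1})$ and $g g_k g^{-1} \to 1$ in $G$. To extend this to an arbitrary $h \in H$, I invoke the density of $\pi(G)$: writing $h = \lim_j \pi(f_j)$ and using continuity of multiplication and inversion in $H$, $h n h^{-1} = \lim_j \pi(f_j) n \pi(f_j)^{-1}$ is a limit of elements of $N$, and closedness of $N$ concludes the argument. This is the only place the density hypothesis enters.

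For (ii), I apply the closed graph theorem to $\tilde\pi$. The graph $\ku G_{\tilde\pi} = \{(g, \pi(g)N) : g \in G\} \subseteq G \times H/N$ satisfies $(\mathrm{id}_G \times q)^{-1}(\ku G_{\tilde\pi}) = \{(g, h) : h \in \pi(g) N\} = \overline{\ku G\pi}$, where $q \colon H \to H/N$ is the quotient map and the second equality is the characterisation of $\overline{\ku G\pi}$ established just before the statement. Since $q$ is a continuous open surjection, so is $\mathrm{id}_G \times q$; in particular $\mathrm{id}_G \times q$ is a quotient map, and hence closedness of $\overline{\ku G\pi}$ in $G\times H$ forces closedness of $\ku G_{\tilde\pi}$ in $G \times H/N$. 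The closed graph theorem then yields continuity of $\tilde\pi$.

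For (iii), I first observe that any continuous homomorphism $\phi$ has trivial characteristic group: a standard diagonal argument shows that $h \in \bigcap_V \overline{\phi(V)}$ yields $h = \lim \phi(v_n)$ for some $v_n \to 1$, whence $h = \phi(1) = 1$ by continuity. Applied to the continuous map $g \mapsto \pi(g)M$, this gives $\bigcap_V \overline{q_M[\pi(V)]} = \{M\}$ in $H/M$, where $q_M \colon H \to H/M$ is the quotient. Continuity of $q_M$ now implies $q_M[\overline{\pi(V)}] \subseteq \overline{q_M[\pi(V)]}$ for every identity neighbourhood $V \subseteq G$, so any $h \in N = \bigcap_V \overline{\pi(V)}$ satisfies $q_M(h) \in \bigcap_V \overline{q_M[\pi(V)]} = \{M\}$, i.e., $h \in M$; hence $N \subseteq M$.
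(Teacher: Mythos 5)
Your proof is correct, and step (ii) takes a genuinely different route from the paper's. The paper proves closedness of $\ku G_{\tilde\pi}$ directly: it shows $\overline{\ku G_{\tilde\pi}}$ is a subgroup, reduces to checking that $(1, fN) \notin \overline{\ku G_{\tilde\pi}}$ when $f \notin N$, and then constructs an explicit open neighbourhood $U \times V/N$ of $(1, fN)$ disjoint from the graph, using the inclusion $\pi[U]N \subseteq \overline{\pi[U]}$. You instead observe that $\ku G_{\tilde\pi}$ is exactly the image of the closed set $\overline{\ku G\pi}$ under the quotient map $\mathrm{id}_G \times q$ and that $\overline{\ku G\pi}$ is saturated, so closedness passes down through the quotient. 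This is cleaner and more conceptual: it replaces an epsilon-chase by a one-line appeal to the universal property of quotient maps. One small caveat: what the paper establishes just before the statement is only $N = \{h : (1,h) \in \overline{\ku G\pi}\}$; the stronger identity $\overline{\ku G\pi} = \{(g,h) : h \in \pi(g)N\}$ that you invoke needs the additional (easy) step that $\overline{\ku G\pi}$ is a subgroup of $G \times H$, so that translating by $(g, \pi(g))^{-1}$ reduces the general fibre to the one over $1 \in G$. It would be worth saying this explicitly. Your step (i) supplies the normality of $N$ in detail, which the paper leaves implicit inside the proposition (and records only in the remark following it); your step (iii) is the same argument as the paper's, phrased through the triviality of the characteristic group of a continuous homomorphism rather than directly with sequences.
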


\begin{proof}
Note first that, since $N$ is closed, the quotient group $H/N$ equipped with the quotient topology is a Polish group. To see that $\tilde \pi$ is continuous, it suffices, by the closed graph theorem to show that its graph is closed.

Observe that $\ku G \tilde{\pi}$ is a subgroup of $G \times H/N$ and hence so is its closure $\overline{\ku G\tilde{\pi}}$. Therefore, if $(g,hN) \in \overline{\ku G\tilde{\pi}}\setminus \ku G\tilde{\pi}$, also $(1, \pi(g)^{-1}hN) \in \overline{\ku G\tilde{\pi}}\setminus \ku G \tilde{\pi}$. Thus, to see that $\ku G \tilde{\pi}$ is closed, it suffices to show that $(1, fN) \notin \overline{\ku G\tilde{\pi}}$ provided $f \notin N$.

So assume that $f \notin N$. Then there is an open identity neighbourhood $U \subseteq G$ so that $f \notin \overline{\pi[U]}$ and thus also an open neighbourhood $V$ of $f$ with $V \cap \overline{\pi[U]}=\emptyset$.  Note that,  if  $u \in U$, we may pick an identity neighbourhood $W$ in $G$ so that $uW \subseteq U$, whereby
\begin{align*}
\pi(u)N \subseteq \pi(u) \overline{\pi[W]} = \overline{\pi(uW)}\subseteq \overline{\pi[U]}.
\end{align*}
This shows that $\pi[U]N \subseteq \overline{\pi[U]}$, which in turn implies that
$$
VN \cap \pi[U]N = \emptyset.
$$ 
Therefore, $U \times V/N$ is a neighbourhood of $(1,fN)$ disjoint from $\ku G \tilde{\pi}$ and we find that $(1,fN) \notin \overline{\ku G\tilde{\pi}}$ as required.

Now suppose that $K \subseteq H$ is any closed normal subgroup such that the induced homomorphism $G\overset{\tilde{\pi}}{\longrightarrow}H/K$ is continuous and let $h \in N$. Then we may find a sequence $(g_n)_n$ in $G$ such that $g_n \rightarrow 1$ and $\pi(g_n) \rightarrow h$, whereby also $\pi(g_n)K \rightarrow hK$. This shows that $(1, hK)$ lies in the closure of the graph of the continuous homomorphism $G\overset{\tilde{\pi}}{\longrightarrow}H/K$ and hence in the graph itself, whence $h\in K$. So $N\subseteq K$ and we see that $N$  is the smallest closed normal subgroup of $H$ so that the induced homomorphism $G\maps{\tilde{\pi}}H/N$ is continuous.
\end{proof}
As we are interested in the continuity of $\pi$, by replacing $H$ with the closed subgroup $\ov{\pi[G]}$, we can always assume that $\pi[G]$ is dense in $H$ and thus that $N$ is  normal in $H$. 

\begin{cor}\label{cor:closed graph}
A homomorphism $G\overset{\pi}{\longrightarrow}H$ between Polish groups is continuous if and only if $\bigcap_{U}\ov{\pi[U]}=\{1\}$, where the $U$ vary over identity neighbourhoods in $G$.
\end{cor}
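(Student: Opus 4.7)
The plan is to prove the two implications separately. The forward direction relies only on continuity together with regularity of the Hausdorff group $H$, while the converse is almost immediate from the preceding proposition, once we reduce to the case of dense image.

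For $(\Rightarrow)$, I would assume $\pi$ is continuous and fix an arbitrary $h \in \bigcap_U \ov{\pi[U]}$, aiming to show $h = 1$. Given any identity neighbourhood $V$ in $H$, regularity of the Polish group $H$ provides a closed identity neighbourhood $V'$ with $V' \subseteq V$. Continuity of $\pi$ then yields an identity neighbourhood $U$ in $G$ with $\pi[U] \subseteq V'$, whence $\ov{\pi[U]} \subseteq V' \subseteq V$. In particular $h \in V$, and since $V$ was arbitrary and $H$ is Hausdorff, $h = 1$.

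For $(\Leftarrow)$, I would assume $\bigcap_U \ov{\pi[U]} = \{1\}$ and invoke the remark preceding the corollary: replace $H$ by the closed subgroup $H' = \ov{\pi[G]}$, so that $\pi$ has dense image in $H'$ and the characteristic group becomes normal. The first step is to observe that this reduction does not alter the characteristic group: each $\pi[U]$ already lies in the closed set $H'$, so its closure in $H'$ coincides with its closure in $H$. Hence the characteristic group computed in $H'$ is again trivial. The preceding proposition then applies and gives that the induced homomorphism $G \maps{\tilde\pi} H'/N$ is continuous; since $N = \{1\}$, this induced map is just $\pi \colon G \to H'$, and composing with the continuous inclusion $H' \hookrightarrow H$ finishes the argument.

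The proof is essentially a repackaging of the previous proposition, so there is no real obstacle. The only point where I would be careful is the invariance of the characteristic group under the passage from $H$ to $\ov{\pi[G]}$; everything else is a routine application of continuity and regularity.
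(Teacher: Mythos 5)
Your proof is correct and follows essentially the same route the paper intends: the corollary is stated as an immediate consequence of the preceding proposition together with the remark about replacing $H$ by $\ov{\pi[G]}$, and your argument makes that deduction precise. Your observation that the characteristic group is unchanged when passing to the closed subgroup $\ov{\pi[G]}$ is exactly the point the paper leaves implicit, and you handle it correctly.
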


Let us note that Corollary \ref{cor:closed graph} gives an a priori weaker condition for continuity of $\pi$ than requiring continuity at $1\in G$. Namely, 
$G\overset{\pi}{\longrightarrow}H$ is continuous at $1$ if the sets $\ov{\pi[U]}$ form a neighbourhood basis at $1$ when $U$ varies over identity neighbourhoods in $G$. In Corollary \ref{cor:closed graph}, we only require their intersection to be $\{1\}$.

A topological group $H$ is said to have {\em no small subgroups} or to be {\em NSS} if there is an identity neighbourhood $V\subseteq H$ so that the only subgroup of $H$ that is contained in $V$ is the trivial one, $\{1\}$. By  the solution to Hilbert's 5th problem, due to D. Mongomery, A. Gleason, H. Yamabe and L. Zippin \cite{tao}, a locally compact group is NSS if and only if it is a Lie group. The closed graph theorem now applies to give us a simple criterion for continuity of homomorphisms into Lie groups.

\begin{cor}\label{cor:closed graph4}
Suppose that $H$ is a Polish group and that $V\subseteq H$ is a closed identity neighbourhood containing no non-trivial subgroups. Then a homomorphism $G\overset{\pi}{\longrightarrow}H$ from a Polish group $G$ into $H$ is continuous if and only if $\pi\inv(V)$ is an identity neighbourhood in $G$. 
\end{cor}

\begin{proof}
    Observe that, if $G\overset{\pi}{\longrightarrow}H$ is a homomorphism so that $U=\pi\inv(V)$ is an identity neighbourhood in $G$, then
    $$
    N\subseteq \ov{\pi[U]}=\ov V=V
    $$
    and hence $N=\{1\}$. By Corollary \ref{cor:closed graph}, $\pi$ is continuous.
\end{proof}

\begin{exa}[Rigidity of continuity in the space of homomorphisms]
    Suppose $G$ is a locally compact Polish group and $H$ is a second countable Lie group, i.e., a Polish Lie group. Let also 
    $$
    {\sf Hom}(G,H)
    $$ 
    denote the space of all (potentially discontinuous) homomorphisms from $G$ to $H$ equipped with the topology of uniform convergence on compacta. That is, a net $(\pi_i)$ of homomorphisms converges to some homomorphism $\pi$ if, for every compact set $K\subseteq G$ and every identity neighbourhood $W\subseteq H$, we have that
    $$
    \pi_i(g)\in \pi(g) W \;\;\text{for all }g\in K
    $$
    for all sufficiently large $i$. 
    
    We claim that the collection of continuous homomorphisms is open in ${\sf Hom}(G,H)$. Indeed, assume $\pi$ is continuous and that $V$ is a closed identity neighbourhood in $H$ with no small subgroups. Pick some identity neighbourhood $W\subseteq H$ so that $WW\inv \subseteq V$ and let $K\subseteq G$ be a compact identity neighbourhood contained in $\pi\inv(W)$. Then, for any homomorphism $\pi'$ satisfying 
    $$
    \pi'(g)\in \pi(g) W \;\;\text{for all }g\in K
    $$
    we have that 
    $$
    \pi'[K]\subseteq \pi[K]W\subseteq W^2\subseteq V,
    $$
    whereby $\pi'$ is continuous by Corollary \ref{cor:closed graph4}.
\end{exa}

\begin{cor}\label{cor:closed graph2}
Let $G\overset{\pi}{\longrightarrow}H$ be a homomorphism between Polish groups and suppose $H\overset{\iota}{\longrightarrow}\Omega$ is a continuous injection into a Hausdorff topological space $\Omega$. Then   $\pi$ is continuous if and only if $\iota\circ \pi$ is continuous. 
\end{cor}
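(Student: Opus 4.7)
The forward direction is trivial, since the composition of two continuous maps is continuous, so the only content lies in the converse. My plan is to reduce everything to Corollary \ref{cor:closed graph} by showing that if $\iota\circ\pi$ is continuous then the characteristic group $N=\bigcap_U\overline{\pi[U]}$ of $\pi$ is trivial.

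So assume $\iota\circ\pi$ is continuous and pick $h\in N$. Since $H$ is Polish and hence metrisable, the sequential description of $N$ applies: there is a sequence $(g_n)$ in $G$ with $g_n\to 1$ and $\pi(g_n)\to h$. Continuity of $\iota$ on the convergent sequence $\pi(g_n)\to h$ in $H$ yields $\iota(\pi(g_n))\to \iota(h)$ in $\Omega$, whereas continuity of $\iota\circ\pi$ together with $g_n\to 1$ yields $\iota(\pi(g_n))\to \iota(\pi(1))=\iota(1)$. Because $\Omega$ is Hausdorff, limits of sequences are unique, so $\iota(h)=\iota(1)$, and the injectivity of $\iota$ forces $h=1$. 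Hence $N=\{1\}$ and Corollary \ref{cor:closed graph} gives the continuity of $\pi$.

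The only step requiring any care is the appeal to sequences for describing $N$, which is justified by the metrisability of the Polish group $H$; the uniqueness of sequential limits in $\Omega$ is immediate from the Hausdorff hypothesis. No obstacle of substance arises here, as the argument is essentially a one-line consequence of the characteristic group formulation already developed.
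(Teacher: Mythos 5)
Your proof is correct and follows essentially the same route as the paper: show that the characteristic group $N$ is trivial and invoke Corollary \ref{cor:closed graph}. The only cosmetic difference is that you use the sequential description of $N$ (noted in the paper immediately after Definition of $N$), whereas the paper's proof works directly with the closure operator, chaining $\iota\big(\bigcap_U\overline{\pi[U]}\big)\subseteq\bigcap_U\overline{\iota(\pi[U])}=\{\iota(1)\}$; both versions use Hausdorffness of $\Omega$ and injectivity of $\iota$ at exactly the same point.
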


\begin{proof}If $\pi$ is continuous, then clearly so is the composition $\iota\circ\pi$. Conversely, suppose that $\iota\circ\pi$ is continuous. Then, by the continuity of both $\iota$ and $\iota\circ\pi$, we have 
$$
\iota\Big(\bigcap_{U}\ov{\pi[U]}\Big) \;\subseteq\; \bigcap_{U}\iota\Big(\ov{\pi[U]}\Big)\;\subseteq\; \bigcap_{U}\ov{\iota\big(\pi[U]\big)}=\{\iota(1)\}. 
$$
So, by injectivity of $\iota$, we find that
$$
\bigcap_{U}\ov{\pi[U]}=\{1\},
$$
whereby $\pi$ is continuous  by Corollary \ref{cor:closed graph}. 
\end{proof}

Let us give three instances of Corollary \ref{cor:closed graph2} of which the first two are well-known. Namely, suppose $X\maps TY$ is a linear operator between two (separable) Banach spaces and let $Y\maps{\iota}\prod_{Y^*}\R$ be the continuous injection defined by 
$$
\iota(y)\big(\phi\big)=\phi(y).
$$
By Corollary \ref{cor:closed graph2}, we see that $T$ is bounded, i.e., continuous, if and only if $\iota\circ T$ is continuous, which, by definition of the Tychonoff product topology, holds if and only if $\phi\circ T$ is continuous for every $\phi\in Y^*$. In this case, the separability of $X$ and $Y$ is irrelevant. 

Also, suppose $(X,d)$ is a separable complete metric space and $G$ is a Polish group acting by isometries on $(X,d)$. Then the action $G\times X\to X$ is jointly continuous if and only if the associated homomorphism $G\maps\pi {\sf Isom}(X,d)$ is continuous with respect to the Polish group topology on  ${\sf Isom}(X,d)$, which is the topology of pointwise convergence on $X$. Letting ${\sf Isom}(X,d)\maps \iota \prod_XX$ be defined by $\iota(f)_x=f(x)$, we see as in the previous example that the action $G\times X\to X$ is jointly continuous if and only if the orbit evaluation map
$$
g\in G\mapsto gx \in X
$$
is continuous for every $x\in X$. 

For the third less familiar instance, consider a compact smooth manifold $M$ and let for each $r=0,1,2,\ldots, \infty$
$$
{\sf Diff}^r(M)
$$
denote the Polish group of $C^r$-diffeomorphisms of $M$. Thus, for $r=0$, this is just the homeomorphism group ${\sf Homeo}(M)$ with the topology of uniform convergence, whereas for $r\geqslant 1$ not only is the group a proper subgroup of ${\sf Homeo}(M)$, but the topology is also finer. As in the preceding two examples, we have the following corollary. 

\begin{cor}\label{cor:closed graph3}
Let $G$ be a Polish group and $r=0,1,2,\ldots,\infty$. Then a homomorphism
$$
G\maps \pi {\sf Diff}^r(M)
$$
is continuous if and only the orbit evaluation map
$$
g\in G\;\mapsto\; \pi(g)x\in M
$$
is continuous for every $x\in M$. 
\end{cor}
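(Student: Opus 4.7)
The plan is to apply Corollary \ref{cor:closed graph2} exactly as in the preceding isometry example. I would set $\Omega = \prod_{x \in M} M$, equipped with the Tychonoff product topology, which is Hausdorff, and define the map
$$
{\sf Diff}^r(M)\maps{\iota}\prod_{x\in M} M, \qquad \iota(f)_x = f(x).
$$
Injectivity of $\iota$ is immediate, since a $C^r$-diffeomorphism is determined by its pointwise values. For continuity, I would observe that for every $r \geqslant 0$, the topology on ${\sf Diff}^r(M)$ refines the topology of uniform convergence on $M$, which in turn refines the topology of pointwise convergence, so each coordinate evaluation $f \mapsto f(x)$ is continuous. Since the product topology on $\prod_{x\in M}M$ is the coarsest topology making all projections continuous, $\iota$ is continuous.

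Given this, Corollary \ref{cor:closed graph2} says that $\pi$ is continuous if and only if $\iota \circ \pi \colon G \to \prod_{x\in M}M$ is continuous. By the universal property of the product topology, this composition is continuous if and only if each coordinate projection
$$
g \in G \;\longmapsto\; \big(\iota \circ \pi\big)(g)_x \;=\; \pi(g)(x) \;\in\; M
$$
is continuous, which is precisely the orbit evaluation map at $x$. Taking the conjunction over all $x \in M$ yields the statement.

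There is essentially no obstacle here beyond the routine verification that the $C^r$ topology refines pointwise convergence; the whole point is that Corollary \ref{cor:closed graph2} has already done the heavy lifting. The only remark worth flagging is that for $r \geqslant 1$ the group ${\sf Diff}^r(M)$ carries a strictly finer topology than the subspace topology from ${\sf Homeo}(M)$, so one must be sure to state that $\iota$ lands in the (set-theoretic) product $\prod_{x\in M}M$ rather than, say, in ${\sf Homeo}(M)$, since the reduction to pointwise convergence is what makes the coordinate criterion so weak.
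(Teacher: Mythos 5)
Your proof is correct and is exactly the argument the paper intends, since it introduces Corollary \ref{cor:closed graph3} simply with the remark ``as in the preceding two examples'' and supplies no separate proof. You correctly set $\iota\colon {\sf Diff}^r(M)\to\prod_{x\in M}M$, $\iota(f)_x=f(x)$, verify injectivity and continuity (the $C^r$ topology refines pointwise convergence), and reduce via Corollary \ref{cor:closed graph2} and the universal property of the product.
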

Corollary \ref{cor:closed graph3} is evidently related to a well-known theorem of S. Bochner and D. Montgomery \cite{bochner} (see also \cite{kim} for a recent exposition). Their theorem says that, when  
$$
G\maps \pi {\sf Diff}^r(M)
$$
is a homomorphism from a Lie group $G$ into a $C^r$-manifold $M$ with $r=0,1,2,\ldots,\infty$ such that the action map 
$G\times M\to M$ is jointly continuous, then it is jointly $C^k$. Coupled with Corollary \ref{cor:closed graph3}, we see that it is enough to assume that the action map is separately continuous, but the corollary also provides additional information for other Polish groups.

From the results above, we remark that the study of the continuity of $G\overset{{\pi}}{\longrightarrow}H$ can be seen as a problem on the complexity of the characteristic group $N$. In particular, allowing $N$ to be non-trivial, gives us weaker notions of continuity. So let us focus our attention on the study of $N$.

We first recall a result of independent interest. Here the equivalence between (1) and (2) is classical, whereas the equivalence with (3) is due to S. Solecki \cite{solecki} and V. Uspenski\u\i{} \cite{uspenskii}  independently. 
\begin{thm}\label{thm:compactness} 
The following conditions are equivalent for a Polish group $G$.
\begin{enumerate}
    \item $G$ is compact
    \item for every identity neighbourhood $U$, there is a finite set $F$ so that $G=FU$,
    \item for every identity neighbourhood $U$, there is a finite set $F$ so that $G=FUF$.
\end{enumerate}
\end{thm}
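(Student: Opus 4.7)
The implications $(1) \Rightarrow (2) \Rightarrow (3)$ are elementary. When $G$ is compact, extracting a finite subcover of $\{gU\}_{g \in G}$ yields $G = FU$, hence (2); putting $F' = F \cup \{1\}$ then gives $G \subseteq F'UF'$, hence (3). The reverse $(2) \Rightarrow (1)$ is classical: condition (2) says $G$ is totally bounded in its left uniformity, which combined with the completeness of a Polish group in its two-sided uniformity (Klee's theorem) forces compactness. The substantive content is $(3) \Rightarrow (1)$, due to Solecki and Uspenski\u\i.

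My plan for $(3) \Rightarrow (1)$ is to deduce (2) from (3). Equip $G$ with a compatible left-invariant metric $d$ via Birkhoff--Kakutani. Given $\epsilon > 0$, set $V = B_\epsilon(1)$. The central aim is to find an identity neighbourhood $U$ and a finite set $F = \{f_1, \ldots, f_n\}$ satisfying both (i) $G = FUF$ and (ii) $f\inv U f \subseteq V$ for every $f \in F$. Granted both, any $g \in FUF$ rewrites as $g = f_i u f_j = (f_i f_j)(f_j\inv u f_j)$ with $f_j\inv u f_j \in V$ by (ii); left-invariance of $d$ then gives $g \in B_\epsilon(f_i f_j)$, so $G \subseteq \bigcup_{i,j} B_\epsilon(f_i f_j)$, an $\epsilon$-net of size $n^2$, which is (2) at scale $\epsilon$.

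The difficulty is circular: (3) supplies $F$ once $U$ is chosen, while (ii) demands $U$ be small relative to $F$. I plan a stabilisation argument. Build a decreasing sequence $U_0 \supseteq U_1 \supseteq \cdots$ of identity neighbourhoods and an increasing sequence $F_0 \subseteq F_1 \subseteq \cdots$ of finite sets inductively: given $G = F_k U_k F_k$, pick $U_{k+1} \subseteq U_k$ with $f\inv U_{k+1} f \subseteq V$ for every $f \in F_k$ (possible by continuity of conjugation and finiteness of $F_k$), then apply (3) to $U_{k+1}$ to obtain $F'_{k+1}$ with $G = F'_{k+1} U_{k+1} F'_{k+1}$, and set $F_{k+1} = F_k \cup F'_{k+1}$. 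If the chain $(F_k)$ stabilises at some index $k$, meaning $F_{k+1} = F_k$, then the pair $(F_k, U_{k+1})$ simultaneously witnesses (i) and (ii), completing the argument.

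The main obstacle I anticipate is establishing the stabilisation, since \emph{a priori} the finite sets $F_k$ might strictly grow at every step. Circumventing this likely requires a more refined selection procedure---for instance, insisting that $F'_{k+1}$ be of minimal cardinality, or imposing further structural constraints on the witnesses produced at each stage---or else replacing the naive iteration by a compactness or Baire category argument on the space of admissible pairs $(F, U)$. This is precisely the step that carries the essential content of the Solecki--Uspenski\u\i\ theorem.
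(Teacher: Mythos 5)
Your implications $(1)\Rightarrow(2)\Rightarrow(3)$ are fine, and the sketch of $(2)\Rightarrow(1)$ is essentially sound, though terse: you should note that $G=FU$ also gives $G=U\inv F\inv$, so $G$ is totally bounded in both the left and right uniformities, and only then does Raikov completeness (what you call Klee's theorem) yield compactness. The paper makes this explicit via the metric $D(g,f)=d(g,f)+d(g\inv,f\inv)$ and a diagonal extraction, but the idea is the same.

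The genuine gap is exactly where you flag it: in $(3)\Rightarrow(2)$, your stabilisation argument has no reason to terminate. As the neighbourhoods $U_k$ shrink, the finite sets $F_{k+1}'$ witnessing $G=F_{k+1}'U_{k+1}F_{k+1}'$ may a priori need to grow without bound, and nothing in hypothesis (3) gives a uniform cardinality bound. Imposing minimality of $|F_{k+1}'|$ at each step does not obviously help, since the minimal cardinality at a finer scale can still be strictly larger, and there is no evident compactness or Baire category structure on pairs $(F,U)$ to exploit. As stated, the plan does not close. The paper proves $(3)\Rightarrow(2)$ by a quite different and much shorter minimality argument that sidesteps conjugation entirely. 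Given $U$, choose $V$ with $VV\inv\subseteq U$, and consider all finite sequences $f_1,\ldots,f_n$ for which some finite $E$ satisfies $G=\bigcup_{i=1}^n EVf_i$; condition (3) guarantees this family is non-empty (take $E=F$ and the $f_i$ an enumeration of $F$). Fix such a sequence of minimal length $n$ together with a witness $E$. If $G\neq EVV\inv$, pick $x\notin EVV\inv$, so $xV\cap EV=\emptyset$, hence $xVf_n\subseteq G\setminus EVf_n\subseteq\bigcup_{i<n}EVf_i$; left-multiplying by $Ex\inv$ gives $EVf_n\subseteq\bigcup_{i<n}Ex\inv EVf_i$, so $G=\bigcup_{i<n}(E\cup Ex\inv E)Vf_i$, contradicting minimality of $n$. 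Hence $G=EVV\inv\subseteq EU$. Note that the minimality is taken over the \emph{length} of the sequence $(f_i)$, not the size of $E$; the set $E$ is allowed to grow, which is what dissolves the circularity you were fighting. I would replace your stabilisation plan with this argument.
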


\begin{proof}
(3) $\Rightarrow$ (2): Let $U$ be any given identity neighbourhood. Find an identity neighbourhood $V$ such that $VV^{-1} \subseteq U$ and let $f_1, \dots, f_n \in G$ be a sequence of minimal length for which there is a finite set $E \subseteq G$ so that
\begin{align*}
            G = \bigcup_{i=1}^n EVf_i.
        \end{align*}
        Fix such a set $E$ and by way of contradiction, assume that $G \neq EVV^{-1}$. Then we can pick $x \notin EVV^{-1}$, whereby $x V \cap EV=\emptyset$ and hence
        \begin{align*}
            xVf_n \subseteq G \setminus EVf_n \subseteq \bigcup_{i=1}^{n-1} EVf_i.
        \end{align*}
        It follows that 
        \begin{align*}
            EVf_n =Ex\inv x Vf_n\subseteq \bigcup_{i=1}^{n-1} Ex^{-1}EVf_i
        \end{align*}
        and therefore that
        \begin{align*}
            G=\bigcup_{i=1}^{n-1} (E\cup Ex^{-1}E)Vf_i,
        \end{align*}
        contradicting the minimality of $n$. We thus conclude that $G=EVV^{-1} \subseteq EU$, where $E$ is finite.

(2) $\Rightarrow$ (1): Let $d$ be a be a compatible left-invariant metric on $G$ and define a metric $D$ on $G$ by 
\begin{align*}
        D(g,f)= d(g,f)+d(g^{-1},f^{-1}).
\end{align*}
Whereas $D$ need not be left or right invariant, it is still compatible with the topology on $G$. Thus, if $\hat G$ denotes the completion of $G$ with respect to $D$, we see that $G$ is a Polish and hence $G_\delta$ subset of the Polish space $\hat G$. Being also dense, it follows that $G$ is a comeagre subset of $\hat G$.

Also, the group operations in $G$ extend continuously to the completion $\hat G$, meaning that $\hat G$ is a Polish group (see \cite{Rogers}, pp. 352-353). In other words, $G$ is a comeagre subgroup of the Polish group $\hat G$, implying that all its cosets in $\hat G$ are also comeagre and therefore intersect $G$, which is possible only if the index $[\hat G:G]$ is one. It follows that $G=\hat G$ and thus that $D$ is a compatible complete metric on $G$. To see that $G$ is compact, it therefore suffices to see that every sequence $(g_n)$ in $G$ admits a $D$-Cauchy subsequence.

So let $(g_n)$ be given and fix a neighbourhood basis $\{U_i\}_{i=1}^\infty$ at the identity. Choose also finite sets $F_i, E_i\subseteq G$ so that 
$$
G=F_iU_i=U_iE_i
$$
for all $i$. By a diagonal extraction procedure, we may produce a subsequence $(g_{k_n})$ of $(g_n)$ and elements $f_i\in F_i$, $h_i\in E_i$ so that $g_{k_n}\in f_iU_i\cap U_ih_i$ whenever $n\geqslant i$. It follows that, for all $i$ and all $n,m\geqslant i$, we have 
$$
g_{k_m}\inv g_{k_n}\in U_i\inv f_i\inv f_iU_i=U\inv_iU_i
$$
and similarly $g_{k_m}g_{k_n}\inv \in U_iU_i\inv$. It follows that both $(g_{k_n})$ and $(g_{k_n}\inv)$ are $d$-Cauchy, whereby $(g_{k_n})$ is  $D$-Cauchy as required.

(1) $\Rightarrow$ (3): Suppose $G$ is compact and let $U$ be an open identity neighbourhood. Then $G=\bigcup_{f\in G} fU$ is an open covering of $G$ and by compactness admits a finite subcovering $G=\bigcup_{i=1}^n f_i U$. Letting $F=\{1,f_1,\dots, f_n\}$, we have $G =FUF$.
\end{proof}

\begin{lemme}\label{lemme:trivialNbd} 
Let $N$ be a Polish group. Then $N=\{1\}$ if and only if, for every identity neighbourhood $V$, there is a finite set $F\subseteq N$ so that $N=\bigcup_{f \in F} fVf^{-1}$. 
\end{lemme}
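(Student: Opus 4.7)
The plan is to handle the trivial ``only if'' direction quickly and then focus on the converse, where the key step is to first observe that the covering hypothesis forces $N$ to be compact via Theorem \ref{thm:compactness}, after which a standard sequential compactness argument yields a contradiction from any putative non-identity element.

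For the forward direction, if $N=\{1\}$, then the only identity neighbourhood in $N$ is $\{1\}$ itself, so $F=\{1\}$ trivially works. For the converse, assume the covering property. The first step is to deduce that $N$ is compact: given any identity neighbourhood $V$, the equation $N=\bigcup_{f\in F}fVf\inv$ yields $N\subseteq EVE$ where $E=F\cup F\inv$, and since this holds for every $V$, condition (3) of Theorem \ref{thm:compactness} applies to give that $N$ is compact.

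With compactness in hand, suppose for contradiction that some $g\in N$ with $g\neq 1$ exists, and fix a decreasing identity neighbourhood basis $(V_n)_{n=1}^\infty$ in $N$ with $\bigcap_n V_n=\{1\}$, available since $N$ is Polish and hence first countable. By the hypothesis applied to each $V_n$, one obtains an element $f_n\in N$ with $g\in f_nV_nf_n\inv$, that is, $f_n\inv gf_n\in V_n$. As the $V_n$ form a neighbourhood basis at $1$, this gives $f_n\inv gf_n\to 1$. By compactness of $N$, extract a convergent subsequence $f_{n_k}\to f\in N$; joint continuity of the group operations then forces $f\inv gf=\lim_k f_{n_k}\inv gf_{n_k}=1$, whence $g=1$, a contradiction.

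The main obstacle, conceptually, is recognising that the covering hypothesis immediately unlocks compactness through Theorem \ref{thm:compactness}; once this reduction is noted, the remainder is a routine limit-compactness argument requiring only first countability and the Hausdorff property.
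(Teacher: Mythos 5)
Your proof is correct, and it takes a genuinely different route from the paper's. Both arguments begin by invoking Theorem \ref{thm:compactness} from the covering hypothesis (your observation that $N=\bigcup_{f\in F}fVf^{-1}\subseteq EVE$ with $E=F\cup F^{-1}$ is exactly right), but you push through to full compactness and then run a metric argument: fix a decreasing neighbourhood basis $(V_n)$, extract conjugators $f_n$ with $f_n^{-1}gf_n\to 1$, pass to a convergent subsequence $f_{n_k}\to f$, and conclude $f^{-1}gf=1$ by continuity and Hausdorffness. The paper instead only uses the implication (3)$\Rightarrow$(2) of Theorem \ref{thm:compactness} to obtain $N=EV$ for some finite $E$, then argues purely algebraically: given a symmetric identity neighbourhood $U$, set $V=\bigcap_{g\in E}gUg^{-1}$, apply the covering hypothesis to this $V$, and observe that each $fVf^{-1}$ (with $f^{-1}=gu$, $g\in E$, $u\in U$) lands inside $u^{-1}Uu\subseteq U^3$; so $N=U^3$ for every such $U$, and Hausdorffness then forces $N=\{1\}$. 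Your argument is perhaps easier to discover for someone who thinks in terms of limits, at the cost of invoking first countability and sequential compactness; the paper's manipulation is slicker and more self-contained, staying entirely inside the algebra of neighbourhoods. Both are valid and of comparable length.
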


\begin{proof}
The implication from left to right is obvious, so consider the other implication and assume that, for every identity neighbourhood $V$, there is a finite set $F\subseteq N$ so that $N=\bigcup_{f \in F} fVf^{-1}$. By Theorem \ref{thm:compactness}, this implies that, for any identity neighbourhood $V$, there is a finite set $E$ such that $N=EV$.   

We now show that for any symmetric neighbourhood $U$ of the identity, one has $N=U^3$. Since $N$ is Hausdorff, this shows that $N=\{1\}$. In order to see this, let $E$ be a finite set such that $N=EU$ and set $V=\bigcap_{g \in E}gUg^{-1}$. Let also $F$ be a finite set so that $N=\bigcup_{f \in F} fVf^{-1}$. For $f \in F$, write $f^{-1}=gu$ for some $g \in E$ and $u \in U$ and observe that
\begin{align*}
fVf^{-1} = u^{-1}g^{-1}Vgu \subseteq u^{-1}Uu \subseteq U^3.
\end{align*}
So $N=\bigcup_{f \in F}fVf^{-1} = U^3$.
\end{proof}

With the preparatory material in hand, we are now ready to prove two theorems that characterise the extent of continuity of a homomorphism $G\maps \pi H$ in terms of the size of the associated characteristic group $N$. The first can be viewed as an extension of the closed graph theorem.

\begin{thm}\cite{rosendal-Pi}\label{thm:characterization N=1}
The following conditions are equivalent for a homomorphism  between Polish groups, $G\overset{\pi}{\longrightarrow}H$. 
\begin{enumerate} 
\item $\pi$ is continuous,
\item $N = \{1\}$,
\item for all identity neighbourhoods $U \subseteq G$ and $V \subseteq H$, there exists a finite set $F \subseteq U$ such that 
$$
\bigcup_{f\in F} f\pi^{-1}(V)  f^{-1}
$$ 
is an identity neighbourhood in $G$.
\end{enumerate}
\end{thm}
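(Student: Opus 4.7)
The equivalences $(1) \Leftrightarrow (2)$ are already essentially established: $(2) \Rightarrow (1)$ is precisely Corollary \ref{cor:closed graph}, and $(1) \Rightarrow (2)$ follows since continuity of $\pi$ at $1$ yields, for every identity neighbourhood $W \subseteq H$, an identity neighbourhood $V \subseteq G$ with $\pi[V] \subseteq W$; thus $\ov{\pi[V]} \subseteq \ov W$, and so $N \subseteq \bigcap_W \ov W = \{1\}$ by Hausdorffness of $H$. The direction $(1) \Rightarrow (3)$ is trivial, since continuity makes $\pi\inv(V)$ itself an identity neighbourhood in $G$, so $F = \{1\} \subseteq U$ works. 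All the content lies in $(3) \Rightarrow (2)$, which I aim to prove by verifying the hypothesis of Lemma \ref{lemme:trivialNbd} for $N$.

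By replacing $H$ with $\ov{\pi[G]}$ (which affects neither $N$ nor the validity of $(3)$), I may assume $\pi[G]$ is dense in $H$. Then $N$ is normal in $H$, and the induced homomorphism $\tilde\pi\colon G \to H/N$ is continuous by the preceding proposition. Let $W \subseteq N$ be an arbitrary identity neighbourhood, and write $W = V^* \cap N$ for some open identity neighbourhood $V^* \subseteq H$. I want a finite $F \subseteq N$ with $N = \bigcup_{n \in F} n W n\inv$. Using elementary topological group calculations, I choose open identity neighbourhoods $V^\sharp, V^\flat, V' \subseteq H$ such that $\ov{V^\sharp} \subseteq V^*$ and $V' V^\flat (V')\inv \subseteq V^\sharp$ (the latter by joint continuity of conjugation at $(1,1)$). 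By continuity of $\tilde\pi$, I then find an identity neighbourhood $U \subseteq G$ with $\pi[U] \subseteq NV'$.

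Now apply $(3)$ to this $U$ and $V = V^\flat$: there is a finite $F_0 \subseteq U$ for which $\ku W := \bigcup_{f \in F_0} f \pi\inv(V^\flat) f\inv$ is an identity neighbourhood in $G$. For each $f \in F_0$, I use $\pi(f) \in NV' = V'N$ (the equality holding by normality of $N$) to write $\pi(f) = n_f v_f$ with $n_f \in N$ and $v_f \in V'$. Then
$$
\pi(f) V^\flat \pi(f)\inv \;=\; n_f \big(v_f V^\flat v_f\inv\big) n_f\inv \;\subseteq\; n_f V^\sharp n_f\inv,
$$
so $\pi[\ku W] \subseteq \bigcup_{f \in F_0} n_f V^\sharp n_f\inv$. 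Taking closures and using $\ov{V^\sharp} \subseteq V^*$,
$$
N \;\subseteq\; \ov{\pi[\ku W]} \;\subseteq\; \bigcup_{f \in F_0} n_f \ov{V^\sharp} n_f\inv \;\subseteq\; \bigcup_{f \in F_0} n_f V^* n_f\inv.
$$
Intersecting with $N$ and using normality once more yields $N = \bigcup_{f \in F_0} n_f W n_f\inv$, with the finite conjugator set $\{n_f \colon f \in F_0\}$ contained in $N$. This is exactly what Lemma \ref{lemme:trivialNbd} demands, so $N = \{1\}$.

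The principal obstacle to be overcome is the apparent mismatch between the conjugators $\pi(f)$ furnished by $(3)$, which live in $H$, and those demanded by Lemma \ref{lemme:trivialNbd}, which must lie in $N$. Continuity of $\tilde\pi\colon G \to H/N$ is exactly what allows one to split off an $N$-component $n_f$ from each $\pi(f)$ and absorb the small residual factor $v_f \in V'$ through the inner conjugation $v_f V^\flat v_f\inv \subseteq V^\sharp$; without this decomposition the Lemma cannot be invoked, since merely knowing that $N$ is covered by finitely many $H$-conjugates of a basic neighbourhood is strictly weaker than triviality of $N$.
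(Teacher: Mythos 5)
Your proof is correct and takes essentially the same route as the paper's: both use continuity of $\tilde\pi\colon G\to H/N$ to ensure the conjugators supplied by~(3) live in $N\cdot(\text{small nbhd})$, split each $\pi(f)=n_f v_f$ with $n_f\in N$, absorb the residual $v_f$ through an auxiliary neighbourhood inclusion, and invoke Lemma~\ref{lemme:trivialNbd}. The paper compresses your triple $V^\sharp,V^\flat,V'$ into a single $W$ with $\ov{WWW^{-1}}\subseteq V$, but this is purely cosmetic.
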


\begin{proof}
The equivalence of (1) and (2) has already been shown and that (1) implies (3) is obvious. So assume that (3) holds and assume, without loss of generality, that $\pi[G]$ is dense in $H$. Let $\tilde{\pi}$ be the induced mapping $G\overset{\tilde{\pi}}{\longrightarrow}H/N$, which is then continuous. To see that $N=\{1\}$, by Lemma \ref{lemme:trivialNbd}, we only need to check that, for every neighbourhood $V$ in $H$, there is a finite set $F \subseteq N$ so that $N \subseteq \bigcup_{f \in F} fVf^{-1}$. So, let $V$ be any identity neighbourhood in $H$ and let $W$ be an open identity neighbourhood so that $\overline{WWW^{-1}} \subseteq V$. By the open mapping theorem, $N W$ is open in $H/N$, whereby, as $\tilde{\pi}$ is continuous, $\pi^{-1}(N W)$ is an identity neighbourhood in $G$. By (3), there is a finite set $E \subseteq \pi^{-1}(N W)$ so that
    \begin{align*}
        U = \bigcup_{g \in E} g\pi^{-1}(W)g^{-1}
    \end{align*}
    is an identity neighbourhood in $G$ and by definition $N \subseteq \overline{\pi[U]}$. Now, let $F \subseteq N$ be a finite set so that $\pi[E] \subseteq FW$. Then
    \begin{align*}
        \pi[U] \subseteq \bigcup_{g\in E}\pi(g)W\pi(g)\inv \subseteq \bigcup_{f \in F}fWWW^{-1}f^{-1},
    \end{align*}
    and so
    \begin{align*}
        N \subseteq \overline{\pi[U]} \subseteq \bigcup_{f \in F} f\overline{WWW^{-1}}f^{-1} \subseteq \bigcup_{f \in F}fVf^{-1}
    \end{align*}
    as we wanted. 
\end{proof}

\begin{thm}\cite{rosendal-Pi}\label{thm:characterization N compact}
The following conditions are equivalent for a homomorphism  between Polish groups, $G\overset{\pi}{\longrightarrow}H$. 
\begin{enumerate} 
\item $N$ is compact,
\item for all identity neighbourhoods $U \subseteq G$ and $V \subseteq H$, there exists a finite set $F \subseteq U$ so that 
$$
F\cdot \pi\inv(V)\cdot F
$$
is an identity neighbourhood in $G$,
\item for all identity neighbourhoods $U \subseteq G$ and $V \subseteq H$, there exists a finite set $F \subseteq U$ so that 
$$
F\cdot \pi\inv(V)
$$
is an identity neighbourhood in $G$.
\end{enumerate}
\end{thm}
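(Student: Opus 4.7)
The plan is to establish the cycle $(1) \Rightarrow (3) \Rightarrow (2) \Rightarrow (1)$, paralleling the strategy of Theorem \ref{thm:characterization N=1}. As in the discussion just before Corollary \ref{cor:closed graph}, one may assume without loss of generality that $\pi[G]$ is dense in $H$, so that $N$ is a closed normal subgroup of $H$ and the induced map $G\maps{\tilde\pi}H/N$ is continuous. The implication $(3) \Rightarrow (2)$ is then immediate: given a finite $F \subseteq U$ witnessing (3), the enlarged set $F' = F \cup \{1\} \subseteq U$ satisfies $F' \cdot \pi\inv(V) \cdot F' \supseteq F \cdot \pi\inv(V)$, which is already an identity neighbourhood in $G$.

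For $(1) \Rightarrow (3)$, fix identity neighbourhoods $U \subseteq G$ and $V \subseteq H$ and choose an open symmetric $W \subseteq H$ with $WWW \subseteq V$. Since $N$ is compact, Theorem \ref{thm:compactness} applied inside $N$ provides a finite $F_0 \subseteq N$ with $N \subseteq F_0 W$, and the inclusion $F_0 \subseteq N \subseteq \ov{\pi[U]}$ allows one to select, for each $n \in F_0$, some $g_n \in U$ with $\pi(g_n) = n w_n$ for some $w_n \in W$. Setting $F = \{g_n : n \in F_0\} \subseteq U$, the set $\pi\inv(NW)$ is an identity neighbourhood in $G$ by continuity of $\tilde\pi$, and the containment $\pi\inv(NW) \subseteq F \cdot \pi\inv(V)$ follows by direct computation: if $\pi(g) = n w$ with $n = f_0 w'$, $f_0 \in F_0$, and $w, w' \in W$, then $\pi(g_{f_0}\inv g) = w_{f_0}\inv w' w \in WWW \subseteq V$, whence $g \in F \cdot \pi\inv(V)$.

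The only substantive step is $(2) \Rightarrow (1)$, and this is where the main obstacle lies. By Theorem \ref{thm:compactness} applied inside the Polish group $N$, it suffices to produce, for every identity neighbourhood $V \subseteq H$, a finite $F \subseteq N$ with $N \subseteq F V F$, since the identity $N \cap F V F = F(V \cap N) F$ (valid when $F \subseteq N$) then yields $N = F(V \cap N) F$. Pick open $W$ with $\ov{WWW} \subseteq V$, and apply (2) to $U = \pi\inv(NW)$ and $W$ to obtain a finite $E \subseteq \pi\inv(NW)$ for which $E \cdot \pi\inv(W) \cdot E$ is an identity neighbourhood in $G$. For each $g \in E$, decompose $\pi(g) = n_g w_g$ with $n_g \in N$ and $w_g \in W$, and set
\[
F = \{n_g : g \in E\} \cup \{w_g\inv n_g w_g : g \in E\} \subseteq N,
\]
using normality of $N$. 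The essential algebraic manoeuvre is the factorisation
\[
\pi(g) \cdot w \cdot \pi(g') \;=\; n_g \cdot (w_g w w_{g'}) \cdot (w_{g'}\inv n_{g'} w_{g'})
\]
for $g, g' \in E$ and $w \in W$, which gives $\pi[E \cdot \pi\inv(W) \cdot E] \subseteq F \cdot \ov{WWW} \cdot F$. Since $F$ is finite, the right-hand side is a finite union of closed sets and therefore closed, so it contains $\ov{\pi[E \cdot \pi\inv(W) \cdot E]} \supseteq N$ and is in turn contained in $F V F$, completing the argument.
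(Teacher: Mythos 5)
Your proof is correct and follows essentially the same approach as the paper: the cycle $(1)\Rightarrow(3)\Rightarrow(2)\Rightarrow(1)$, the reduction to dense image so that $\tilde\pi\colon G\to H/N$ is continuous, the application of Theorem \ref{thm:compactness} inside $N$, and the choice $U=\pi\inv(NW)$ in $(2)\Rightarrow(1)$ all match. The only cosmetic difference is in $(2)\Rightarrow(1)$, where you produce the right-hand representative $w_g\inv n_g w_g$ explicitly via normality, whereas the paper gets $\pi[E]\subseteq FW\cap WF$ directly from $NW=WN$; these are the same idea.
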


\begin{proof}Without loss of generality, we may assume that $\pi[G]$ is dense in $H$, whereby $N$ is the smallest closed normal subgroup of $H$ so that $G\overset{{\tilde\pi}}{\longrightarrow}H/N$ is continuous.

(1)$\saa$(3):  Suppose $N$ is compact and let $U \subseteq G$ and $V \subseteq H$ be the given identity neighbourhoods. Fix some symmetric open identity neighbourhood $W \subseteq H$ so that $W^3 \subseteq V$. As $N$ is compact, let $E \subseteq N$ be a finite set so that $N \subseteq EW$. Then, as $E \subseteq N \subseteq \overline{\pi[U]} \subseteq \pi[U]W$, we can find a finite set $F \subseteq U$ with $E \subseteq \pi[F]W$ and thus
\begin{align*}
            NW \subseteq EW^2 \subseteq \pi[F]W^3 \subseteq \pi[F]V.
\end{align*}
Since $G\overset{{\tilde\pi}}{\longrightarrow}H/N$ is continuous, it follows that $\pi^{-1}(NW)$ is open in $G$ and thus also that $\pi^{-1}(\pi[F]V)=F\pi^{-1}(V)$ is an identity neighbourhood in $G$.

The implication (3)$\saa$(2) is immediate.

(2)$\saa$(1):
By Theorem \ref{thm:compactness} it is enough to show that, for every identity neighbourhood $V$ in $H$, there exists a finite set $F \subseteq N$ such that $N \subseteq FVF$. So, let $V$ be given and find a symmetric open neighbourhood $W$ so that $\overline{W^3} \subseteq V$. As the induced mapping $G\overset{\tilde{\pi}}{\longrightarrow}H/N$ is continuous, the set $\pi^{-1}(NW)$ is open in $G$. Therefore, there is a finite set $E \subseteq \pi^{-1}(NW)$ such that $U=E\pi^{-1}(W)E$ is an identity neighbourhood in $G$. As $E \subseteq \pi^{-1}(NW)=\pi^{-1}(WN)$, we have $\pi[E]\subseteq FW\cap WF$ for some finite set $F \subseteq N$. Therefore, by the definition of $N$,
        \begin{align*}
            N \subseteq \overline{\pi[U]} \subseteq \overline{\pi[E]W\pi[E]}\subseteq \overline{FWWWF} \subseteq F\overline{WWW}F \subseteq FVF,
        \end{align*}
proving that $N$ is indeed compact.
\end{proof}

\section{Christensen's Problem}\label{sec:christensen}
As witnessed by Theorems \ref{thm:characterization N=1} and \ref{thm:characterization N compact}, the degree of continuity of $G\overset{{\pi}}{\longrightarrow}H$ depends on the size of $N$. For example,  $\pi$ is continuous if and only if $N=\{1\}$. However, when $G$ is locally compact, it carries a left-invariant Haar measure $\mu $ that also allows us to add considerations of measurability. We begin with a classical result due to H. Steinhaus \cite{steinhaus} for $G=\R$ and A. Weil \cite{weil} for arbitrary $G$.

\begin{thm}\cite{steinhaus,weil}\label{weil}
Let $G$ be a locally compact Polish group with (left) Haar measure $\mu$. Then, for any $\mu$-measurable set of positive measure, $A \subseteq G$, the product $AA^{-1}$ is an identity neighbourhood. 
\end{thm}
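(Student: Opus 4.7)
The plan is to follow the classical translation-measure argument, exploiting the inner and outer regularity of Haar measure together with left-invariance. Since every Haar-measurable set of positive measure contains a compact subset of positive measure (by inner regularity of $\mu$ on locally compact Polish groups), it suffices to prove the statement for a compact $K \subseteq A$ with $\mu(K) > 0$, as $KK^{-1} \subseteq AA^{-1}$.

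The quantitative core of the proof is the observation that two left translates $vK$ and $K$ of the same compact set must intersect as soon as their union has measure strictly less than $2\mu(K)$. Concretely, by outer regularity I would pick an open set $U \supseteq K$ with
$$
\mu(K) \;\leqslant\; \mu(U) \;<\; \tfrac{3}{2}\,\mu(K).
$$
By left-invariance of $\mu$, for any $v \in G$ we have $\mu(vK) = \mu(K)$, so if $vK \cup K \subseteq U$ and $vK \cap K = \emptyset$, then
$$
2\mu(K) \;=\; \mu(vK) + \mu(K) \;=\; \mu(vK \cup K) \;\leqslant\; \mu(U) \;<\; \tfrac{3}{2}\mu(K),
$$
which is absurd. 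Hence $vK \cap K \neq \emptyset$, which means $v \in KK^{-1}$.

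It remains to produce an open identity neighbourhood $V$ with $VK \subseteq U$, so that every $v \in V$ falls under the preceding dichotomy. Using the fact that multiplication is continuous at $(1,k)$ for each $k \in K$, one chooses for every $k \in K$ an open identity neighbourhood $W_k$ with $W_k W_k \cdot k \subseteq U$; compactness of $K$ then yields $k_1, \dots, k_n$ with $K \subseteq \bigcup_i W_{k_i} k_i$, and the intersection $V = \bigcap_i W_{k_i}$ satisfies $VK \subseteq U$.  Combining the two steps, $V \subseteq KK^{-1} \subseteq AA^{-1}$, so $AA^{-1}$ contains the identity neighbourhood $V$, as required.

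The main (mild) obstacle is making the measure-theoretic housekeeping clean in the possibly non-$\sigma$-finite setting of a general locally compact Polish group, but this is harmless here because we immediately restrict attention to a compact subset $K$, on which $\mu$ is finite, and an open set $U$ of finite measure containing it; no genuine difficulty intervenes beyond the standard regularity of Haar measure.
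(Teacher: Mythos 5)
Your proposal is correct and follows essentially the same route as the paper's proof: sandwich a compact $K$ inside an open $U$ with a small measure gap, then observe that any $v$ with $vK \subseteq U$ forces $vK \cap K \neq \emptyset$ by counting measure. The only cosmetic differences are your slightly tighter constant ($\tfrac{3}{2}\mu(K)$ rather than $2\mu(K)$), your spelling out of the compactness argument producing the neighbourhood $V$ with $VK \subseteq U$ (which the paper takes for granted), and your choice to first pass to a compact $K \subseteq A$ of finite positive measure before applying outer regularity — a small refinement that gracefully handles the case $\mu(A)=\infty$, which the paper's phrasing $K\subseteq A\subseteq U$ with $\mu(U)<2\mu(K)$ leaves implicit.
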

\begin{proof}
    By regularity of $\mu$, we may find a compact set $K$ and an open set $U$ such that $K \subseteq A \subseteq U$ and $\mu(U)<2\mu(K)$. Now, pick an open identity neighbourhood $V$ such that $VK \subseteq U$. Then, if $g \in V$, $gK$ is a subset of $U$ of measure $\mu(K)$ and so $K \cap gK \neq \emptyset$. Otherwise, because $gK \cup K \subseteq U$ and they are disjoint, we would have that
\begin{align*}
        \mu(U)\geqslant \mu(gK\cup K)=\mu(gK)+\mu(K)=2\mu(K),
    \end{align*}
    contradicting the choice of $K$ and $U$.
We conclude that $g \in KK^{-1} \subseteq AA^{-1}$. So $V \subseteq AA^{-1}$.
\end{proof}

\begin{cor}
 Let $G$  and $H$ be Polish groups with $G$ locally compact. Then every Haar measurable homomorphism $G\overset{{\pi}}{\longrightarrow}H$  is continuous.
\end{cor}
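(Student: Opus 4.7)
The plan is to reduce continuity of $\pi$ to continuity at the identity of $G$, and to exhibit a sufficiently small identity neighbourhood of $G$ in the preimage of an arbitrary identity neighbourhood of $H$ by exploiting Theorem \ref{weil}. So fix an identity neighbourhood $V\subseteq H$ and choose a symmetric identity neighbourhood $W\subseteq H$ with $WW^{-1}\subseteq V$. The goal is to show that $\pi^{-1}(V)$ contains an identity neighbourhood in $G$, and the strategy is to verify that $\pi^{-1}(W)$ has positive Haar measure, so that Theorem \ref{weil} immediately produces
$$
\pi^{-1}(W)\cdot \pi^{-1}(W)^{-1}\;\subseteq\; \pi^{-1}(WW^{-1})\;\subseteq\; \pi^{-1}(V)
$$
as the desired identity neighbourhood.

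To confirm positivity of the measure of $\pi^{-1}(W)$, I would invoke separability of $H$, which makes the subgroup $\pi[G]\subseteq H$ separable, and thus coverable by countably many translates $\pi(g_n)W$ for some sequence $(g_n)\subseteq G$. Pulling back through $\pi$, this yields
$$
G \;=\; \bigcup_{n=1}^\infty g_n\cdot\pi^{-1}(W).
$$
Now fix a compact identity neighbourhood $K\subseteq G$, which has finite positive Haar measure since $G$ is locally compact Polish. Restricting the above decomposition to $K$, at least one of the measurable sets $K\cap g_n\pi^{-1}(W)$ must have positive measure, and by left invariance of $\mu$ this forces $\mu\big(\pi^{-1}(W)\big)>0$. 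Here it is crucial that $\pi^{-1}(W)$ is Haar measurable, which holds by the Haar measurability assumption on $\pi$.

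The main conceptual step is really this measurability/positivity argument, which converts a purely algebraic covering of $G$ by translates of $\pi^{-1}(W)$ into positivity of $\mu\big(\pi^{-1}(W)\big)$; once that is in hand, Theorem \ref{weil} closes the argument without further effort. The potential obstacle is verifying that $\pi^{-1}(W)$ is indeed Haar measurable from the hypothesis that $\pi$ is Haar measurable, but since $W$ is open in $H$ (and we may shrink $W$ to an open symmetric neighbourhood if needed), its preimage under a measurable homomorphism is measurable by definition. Thus the argument bypasses any appeal to the characteristic group $N$, although one could alternatively phrase the conclusion as the statement $N=\{1\}$ via Theorem \ref{thm:characterization N=1}.
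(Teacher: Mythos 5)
Your proof is correct and follows essentially the same route as the paper's: cover $G$ by countably many measurable sets (obtained by pulling back a covering of $H$ by translates of a small neighbourhood), extract one of positive Haar measure, and apply the Steinhaus--Weil theorem. The only cosmetic difference is that you cover $\pi[G]$ by translates $\pi(g_n)W$ with $g_n\in G$ so that the pullback is a covering of $G$ by left translates $g_n\pi^{-1}(W)$, letting you conclude $\mu\big(\pi^{-1}(W)\big)>0$ directly via left invariance; the paper instead covers all of $H$ by $Vh_n$ with $(h_n)$ dense in $H$ and applies Steinhaus--Weil to whichever set $\pi^{-1}(Vh_m)$ has positive measure, without needing it to be a translate of $\pi^{-1}(V)$. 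Both work, and your explicit appeal to a compact neighbourhood $K$ to secure $\sigma$-finiteness is harmless but unnecessary (a locally compact Polish group is $\sigma$-compact, and in any case $G$ itself has positive measure so cannot be a countable union of null sets).
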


\begin{proof}
As it suffices to prove continuity at the identity, let $U$ be any given identity  neighbourhood in $H$ and choose a smaller open identity neighbourhood $V \subseteq U$ such that $VV^{-1} \subseteq U$. Choose a dense sequence $(h_n)_n$ in $H$, whereby  $H=\bigcup_{n=1}^\infty Vh_n$.  Because $\pi$ is measurable, $G=\bigcup_{n=1}^\infty \pi^{-1}(Vh_n)$ is a covering by measurable sets, so we may pick some  $m \in \N$ such that $\mu(\pi^{-1}(Vh_m))>0$. By Theorem \ref{weil}, the set 
$$
\pi^{-1}(Vh_m)(\pi^{-1}(Vh_m))^{-1}
$$ 
is an identity neighbourhood in $G$ and hence so is  $\pi^{-1}(U)$.
\end{proof}

The previous result can be extended to all Polish groups $G$ provided that the concept of Haar measurability, which only makes sense for locally compact groups, is replaced with an alternative measure theoretical concept. For this, a subset $A$ of a Polish group $G$ is called \emph{universally measurable} if it is measurable with respect to \emph{every} Borel probability measure or, equivalently, to every $\sigma$-finite Borel measure on $G$. In particular, if $G$ is locally compact Polish, then a universally measurable set is a fortiori Haar measurable. 

In the context of his study of notions of measure theoretically small sets in infinite-dimensional Banach spaces and topological groups, J.P.R. Christensen investigated the question of whether universally measurable homomorphisms between Polish groups are continuous \cite{christensen}. In particular, Christensen proved this to be the case when either the domain or range is abelian. The proof of this result relies on a more precise statement with wider applicability.

If $k\geqslant 2$, we let $C_k =\Z/k\Z$ be the cyclic group of order $k$, which we may of course identify with the finite set $\{0,1,\ldots, k-1\}$. Let also $\prod_{n=1}^\infty C_k $ be the infinite direct product, which is a compact topological group, and let $\bigoplus_{n=1}^\infty C_k $ be the dense subgroup of $\prod_{n=1}^\infty C_k $ consisting of finitely supported elements, that is, elements of the form $(z_1,z_2,\ldots, z_m, 0,0,\ldots)$ with $z_i\in C_k $. As $\prod_{n=1}^\infty C_k $ is abelian, we write the group operation additively.

\begin{thm}\cite{christensen}\label{jpr}
Suppose that $G=\bigcup_{j=1}^\infty A_j$ is a  covering of a Polish group $G$ by universally measurable sets $A_j$ and that $U$ is an identity neighbourhood in $G$. Then there are a finite set $F\subseteq U$ and some $j$ so that 
$$
\bigcup_{g\in F}gA_jA_j\inv g\inv 
$$ 
is an identity neighbourhood.
\end{thm}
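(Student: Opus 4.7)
The strategy is a pushforward-measure argument that reduces the claim to the classical Steinhaus theorem on the Cantor group $2^\N$, with the conjugating set $F$ arising to absorb the non-commutativity of $G$.

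First, construct a continuous $\phi\colon 2^\N\to G$ with compact image inside $U$. Pick a rapidly shrinking basis $U\supseteq W_1\supseteq W_2\supseteq\cdots$ of symmetric open identity neighbourhoods and elements $h_n\in W_n$, where the $W_n$ decrease fast enough in a fixed compatible complete left-invariant metric that the infinite product $\phi(\eps):=h_1^{\eps_1}h_2^{\eps_2}\cdots$ converges in $G$ and lies in $U$ for every $\eps\in 2^\N$, and arrange the $h_n$ to be ``rich'' enough (for instance, by letting them enumerate a dense subset of $W_n$ in blocks) that the tail products $\phi^{>N}(2^{\N\setminus[N]})$ meet every identity neighbourhood of $G$ for each $N$. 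Pushing forward the Haar probability $\nu$ on $2^\N$ gives a Borel probability $\mu=\phi_*\nu$ on $G$ supported on the compact image $K\subseteq U$. Countable additivity applied to $G=\bigcup_jA_j$ produces some $j_0$ with $\mu(A_{j_0})>0$, equivalently $\nu(B)>0$ for $B:=\phi\inv(A_{j_0})\subseteq 2^\N$.

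The classical Steinhaus theorem in the compact abelian group $2^\N$, combined with Lebesgue density along the coordinate split $[N]\sqcup(\N\setminus[N])$, now produces an integer $N$ and a $\sigma_0\in 2^N$ with $\nu^{>N}(B_{\sigma_0})>\tfrac12$, where $B_{\sigma_0}=\{\tau\in 2^{\N\setminus[N]}\del(\sigma_0,\tau)\in B\}$; a counting-measure argument then gives $B_{\sigma_0}\oplus B_{\sigma_0}=2^{\N\setminus[N]}$. For $\tau_1,\tau_2\in B_{\sigma_0}$ and $\eps=(\sigma_0,\tau_1)$, $\eta=(\sigma_0,\tau_2)$, the common prefix $\phi_N(\sigma_0):=h_1^{\sigma_{0,1}}\cdots h_N^{\sigma_{0,N}}\in U$ and the key algebraic identity
$$\phi(\eps)\phi(\eta)\inv=\phi_N(\sigma_0)\bigl(\phi^{>N}(\tau_1)\phi^{>N}(\tau_2)\inv\bigr)\phi_N(\sigma_0)\inv\in A_{j_0}A_{j_0}\inv$$
places the tail difference $\phi^{>N}(\tau_1)\phi^{>N}(\tau_2)\inv$ inside $\phi_N(\sigma_0)\inv A_{j_0}A_{j_0}\inv\phi_N(\sigma_0)$. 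Taking $F=\{\phi_N(\sigma)\inv\del\sigma\in 2^N\}\subseteq U$, a finite subset of $U$, we obtain that every such tail difference (over all initial segments $\sigma\in 2^N$ whose slice carries positive mass) lies in $\bigcup_{g\in F}gA_{j_0}A_{j_0}\inv g\inv$.

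The principal technical obstacle is verifying that the collection of tail differences covers an identity neighbourhood of $G$, rather than only a neighbourhood inside the thin compact skeleton $K$. In the abelian case, the tail differences range over the products $\prod_{n>N}h_n^{s_n}$ with $s_n\in\{-1,0,1\}$, and the density of $(h_n)$ in successive small neighbourhoods forces this set to swallow an identity neighbourhood of $G$. In the general non-abelian case, the rapid shrinkage of the $W_n$ forces the commutators $[h_i,h_j]$ for $i,j>N$ to be negligible compared to the $h_n$ themselves, so the abelian approximation remains valid up to a controlled error, and this residual discrepancy is precisely what is absorbed by the conjugating finite set $F$.
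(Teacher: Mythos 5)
Your overall strategy diverges from the paper's in a way that creates a fatal gap. You attempt a \emph{direct} proof: build $\phi\colon 2^\N\to G$ with image in $U$, push Haar measure forward, locate $j_0$ with $\mu(A_{j_0})>0$, apply Steinhaus in $2^\N$, and conclude that the resulting set of ``tail differences'' $\phi^{>N}(\tau_1)\phi^{>N}(\tau_2)^{-1}$ swallows an identity neighbourhood of $G$. But all these tail differences lie in $KK^{-1}$ where $K=\phi(2^\N)$ is compact, and in a non-locally-compact Polish group every compact set has empty interior (if a compact set contained a non-empty open set, translating would make $G$ locally compact). So the set you exhibit can never be an identity neighbourhood of $G$ whenever $G$ is a Banach space, $S_\infty$, a homeomorphism group, etc.\ --- precisely the cases where the theorem is of interest. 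Making the $h_n$ ``rich'' does not help: the tail-difference set is still a continuous image of a compact set and hence compact, and the hand-waved claim that ``the residual discrepancy is precisely what is absorbed by the conjugating finite set $F$'' does not make a thin compact set acquire interior.

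The paper sidesteps this entirely by arguing by \emph{contradiction}: one assumes the conclusion fails for every finite $F\subseteq U$ and every $j$, and uses this assumption at each inductive stage to choose $g_n\in U$ \emph{avoiding} all the finitely many conjugated sets $(g_{i_1}\cdots g_{i_m})^{-1}A_{\iota(n)}A_{\iota(n)}^{-1}(g_{i_1}\cdots g_{i_m})$. Then one only needs the Steinhaus--Weil theorem in $\prod_n C_2$ to produce a \emph{single} pair $z,u\in\phi^{-1}(A_j)$ differing in exactly one coordinate $n$, which forces $g_n$ into one of the forbidden sets and yields the contradiction. No subset of $G$ is ever required to have interior; that is the crucial structural difference. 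Your algebraic identity $\phi(\eps)\phi(\eta)^{-1}=\phi_N(\sigma_0)\,\phi^{>N}(\tau_1)\phi^{>N}(\tau_2)^{-1}\,\phi_N(\sigma_0)^{-1}$ is exactly the conjugation trick the paper exploits, so the algebraic core of your proposal is right; it just has to sit inside an indirect argument rather than a direct construction. (A minor further point: you set $F=\{\phi_N(\sigma)^{-1}\colon\sigma\in 2^N\}$ and assert $F\subseteq U$, but you only arranged $\phi_N(\sigma)\in U$, not its inverse; one should first shrink $U$ to a symmetric identity neighbourhood.)
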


\begin{proof}
Suppose for a contradiction that the conclusion fails for $A_1, A_2, \ldots$ and $U$. That is,  for all $h_1,\ldots,h_n\in U$, any $j$ and any identity neighbourhood $V$, 
$$
V\setminus \Big(h_1\inv A_jA_j\inv h_1\cup\ldots\cup h_n\inv A_jA_j\inv h_n\Big)\neq \tom.
$$
Let $\iota\colon \N\to \N$ be a surjection with infinite fibres.
Then we can inductively choose $g_1,g_2,\ldots\in U$ such that for all $i_1<i_2<i_3<\ldots$
\begin{enumerate}
\item any finite product $g_{i_1}g_{i_2}g_{i_3}\cdots g_{i_m}$ belongs to $U$,
\item the infinite product $g_{i_1}g_{i_2}g_{i_3}\cdots$ converges fast to an element of $G$,
\item if $i_1<\ldots <i_m<n$, then 
$$
g_{n}\notin (g_{i_1}\cdots g_{i_{m}})\inv A_{\iota(n)}A_{\iota(n)}\inv(g_{i_1}\cdots g_{i_{m}}).
$$
\end{enumerate}
Here, the convergence in (2) is assumed to be fast enough that the function 
$$
\prod_{n=1}^\infty C_2\maps \phi G
$$ 
defined by setting
$$
\phi(z)=g_1^{z_1}g_2^{z_2}g_3^{z_3}\cdots,
$$
where $g^0=1$ and $g^1=g$, will be continuous.

Note now that some $\phi\inv(A_j)$ must have positive Haar measure in  $\prod_{n=1}^\infty C_2$ and hence, by the Steinhaus--Weil theorem,
$$
\phi\inv(A_j)-\phi\inv(A_j)
$$
is a neighbourhood of the identity ${\bf 0}=(0,0,\ldots)$ in $\prod_{n=1}^\infty C_2$. As the fibre $\iota\inv(j)$ is  infinite, this means that there is some $n\in \iota\inv(j)$ so that
$$
\underbrace{(0,\ldots,0,1,0,0,\ldots)}_{\textrm{$1$ in the $n$th position}}\in \phi\inv(A_j)-\phi\inv(A_j).
$$
In particular, there are two elements $z,u\in \phi\inv(A_j)$ differing exactly in the $n$th coordinate, say $z_n=1$ and $u_n=0$. Thus
$$
\phi(z)=hg_nk \in A_j, \quad \phi(u)=hk\in A_j,
$$
where $h=g_{i_1}g_{i_2}\cdots g_{i_{m}}$, $i_1<\ldots<i_{m}<n$, and $k\in G$.
It follows that
$$
hg_nh\inv=hg_nk\cdot (hk)\inv\in A_jA_j\inv=A_{\iota(n)}A_{\iota(n)}\inv
$$
and so
$$
g_n\in h\inv A_{\iota(n)}A_{\iota(n)}\inv h= (g_{i_1}g_{i_2}\cdots g_{i_{m}})\inv A_{\iota(n)}A_{\iota(n)}\inv(g_{i_1}g_{i_2}\cdots g_{i_{m}}),
$$
contradicting the choice of $g_n$.
\end{proof}

With this result, now it is possible to prove the following general result for Polish groups.

\begin{thm}\cite{rosendal-Pi}\label{thm:universally_measurable_continuous}
Let $G\overset{{\pi}}{\longrightarrow}H$ be a universally measurable homomorphism between Polish groups. Then $\pi$ is continuous.
\end{thm}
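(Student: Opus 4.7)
The plan is to verify condition (3) of Theorem \ref{thm:characterization N=1} by feeding a suitable countable covering of $G$ into Christensen's covering theorem (Theorem \ref{jpr}). Fix identity neighborhoods $U\subseteq G$ and $V\subseteq H$; we must produce a finite set $F\subseteq U$ so that $\bigcup_{f\in F} f\pi^{-1}(V)f^{-1}$ is an identity neighborhood in $G$.

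First I would choose an open symmetric identity neighborhood $W\subseteq H$ with $WW^{-1}\subseteq V$, and pick a countable dense sequence $(h_n)$ in $H$, giving the open covering $H=\bigcup_n Wh_n$. Pulling back along $\pi$ yields the covering
\[
G=\bigcup_{n=1}^\infty \pi^{-1}(Wh_n),
\]
and since $\pi$ is universally measurable and each $Wh_n$ is Borel (in fact open), each set $\pi^{-1}(Wh_n)$ is universally measurable. Theorem \ref{jpr} then produces a finite $F\subseteq U$ and an index $m$ so that
\[
\bigcup_{f\in F} f\,\pi^{-1}(Wh_m)\pi^{-1}(Wh_m)^{-1}f^{-1}
\]
is an identity neighborhood in $G$.

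The key computation, and the only real subtlety, is that the inner product simplifies nicely precisely because we covered $H$ by \emph{right} translates of $W$: if $\pi(a),\pi(b)\in Wh_m$, then $\pi(ab^{-1})=\pi(a)\pi(b)^{-1}\in Wh_m\cdot h_m^{-1}W^{-1}=WW^{-1}\subseteq V$. Consequently
\[
\pi^{-1}(Wh_m)\cdot \pi^{-1}(Wh_m)^{-1}\subseteq \pi^{-1}(V),
\]
and had we tried to use left translates $h_nW$ instead, a conjugation by $h_m$ would have spoiled this inclusion. Combining the two displays,
\[
\bigcup_{f\in F} f\pi^{-1}(V)f^{-1}\supseteq \bigcup_{f\in F} f\,\pi^{-1}(Wh_m)\pi^{-1}(Wh_m)^{-1}f^{-1}
\]
is an identity neighborhood in $G$. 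Since $U$ and $V$ were arbitrary, condition (3) of Theorem \ref{thm:characterization N=1} holds, and so $\pi$ is continuous.

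The main obstacle is really just the packaging: all the genuine work has been offloaded to Christensen's covering lemma (Theorem \ref{jpr}) and the characterization of continuity via the characteristic group $N$ (Theorem \ref{thm:characterization N=1}). One has to remember to use right cosets when covering $H$ so that the resulting product $\pi^{-1}(\,\cdot\,)\pi^{-1}(\,\cdot\,)^{-1}$ lands in $\pi^{-1}(V)$ without any stray conjugation.
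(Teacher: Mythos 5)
Your proof is correct and follows essentially the same route as the paper's: both cover $G$ by the preimages of countably many right translates of $W$ (equivalently, by right translates of $\pi^{-1}(W)$), feed this covering into Christensen's Theorem \ref{jpr}, observe that $\pi^{-1}(Wh_m)\pi^{-1}(Wh_m)^{-1}\subseteq\pi^{-1}(WW^{-1})\subseteq\pi^{-1}(V)$, and invoke condition~(3) of Theorem \ref{thm:characterization N=1}. Your side remark about why right (rather than left) translates are needed is accurate and matches the reason the paper phrases its covering as right translates of $A=\pi^{-1}(W)$.
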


\begin{proof}
This follows from Theorems  \ref{thm:characterization N=1} and \ref{jpr}. Indeed, let $V$ be any identity neighbourhood in $H$ and take an identity neighbourhood $W$ such that $WW^{-1} \subseteq V$. Then the set $A=\pi^{-1}(W)$ is universally measurable and, as $H$ is separable, covers $G$ by countably many right translates. By Theorem \ref{jpr}, this means that for any identity neighbourhood $U$ in $G$ there are $g_1, \dots, g_n \in U$ so that 
$$
1\in {\sf int}\Big(\bigcup_{i=1}^n g_i AA^{-1}g_i\inv\Big)\subseteq  {\sf int} \Big( \bigcup_{i=1}^n g_i \pi^{-1}(V)g_i^{-1}\Big).
$$ 
By Theorem \ref{thm:characterization N=1}, we conclude that  $\pi$ is continuous.
\end{proof}

Regarding the problem of automatic continuity of universally measurable homo\-morphisms, T. Banakh \cite[Theorem 1.2]{banakh2022automatic} extended Theorem \ref{thm:universally_measurable_continuous} to the situation when $G$ is \v Cech-complete and $H$ is an aribitrary topological group. To understand this statement, let us note that a topological space $X$ is {\em \v Cech-complete} provided that it is homeomorphic to a $G_\delta$-subset of some compact Hausdorff space. As every Polish space admits a metrisable compactification in which it is necessarily $G_\delta$, \v Cech-complete spaces is a wider class than the Polish spaces. 

\begin{thm}\cite{banakh2022automatic}\label{thm:banakh2022automatic}
Let $G$ be a \v Cech-complete topological group, $H$ be an arbitrary topological group and $G\overset{{\pi}}{\longrightarrow}H$ a universally measurable homomorphism. Then $\pi$ is continuous.
\end{thm}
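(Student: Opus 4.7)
The plan is to extend the proof of Theorem \ref{thm:universally_measurable_continuous} by simultaneously relaxing the hypothesis on $G$ from Polish to \v Cech-complete and removing separability from $H$. The first step is to reduce continuity of $\pi$ to the triviality of the characteristic group $N=\bigcap_U\overline{\pi[U]}$ in the spirit of Theorem \ref{thm:characterization N=1}. This reduction carries over to the present context because \v Cech-complete spaces are Baire, Pettis' lemma only relies on the Baire property, and the open mapping theorem and closed graph theorem can be formulated on \v Cech-complete domains without further changes.

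The bulk of the work is to generalise Christensen's covering lemma, Theorem \ref{jpr}, to \v Cech-complete $G$. The Polish hypothesis enters its proof in two specific places: in selecting the identity neighbourhoods $U_n$ along which the fast convergence of the infinite products $g_{i_1}g_{i_2}g_{i_3}\cdots$ is measured, and in guaranteeing that these products actually converge in $G$. Both ingredients are supplied by the standard structure of a \v Cech-complete topological group, which admits a descending sequence of open identity neighbourhoods $U_1\supseteq U_2\supseteq\cdots$ together with a complete two-sided uniformity. With such a sequence the inductive selection of the $g_n$, the construction of the continuous map $\phi\colon\prod_{n=1}^\infty C_2\to G$, and the subsequent Steinhaus--Weil argument inside the compact group $\prod_{n=1}^\infty C_2$ all go through essentially verbatim.

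The main obstacle is replacing the role of separability of $H$, which in the proof of Theorem \ref{thm:universally_measurable_continuous} produced the countable cover $G=\bigcup_n\pi^{-1}(W)g_n$ to which Theorem \ref{jpr} was applied. My plan is to reduce to a separable target by pushing forward the Haar measure $\lambda$ on $\prod_{n=1}^\infty C_2$ along the measurable composition $\pi\circ\phi$: the resulting Borel measure on $H$, being the pushforward from a compact probability space via a universally measurable map, is Radon (by a Lusin-style approximation on compact subsets of $\prod_{n=1}^\infty C_2$) and hence concentrated on a separable, $\sigma$-compact subset of $H$. After replacing $H$ by the closed subgroup generated by this support, the preimage $\pi^{-1}(W)$ can be covered by countably many of its translates in $G$, and the \v Cech-complete version of Theorem \ref{jpr} produces the required conjugated identity neighbourhood. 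The deduction of continuity of $\pi$ then follows via the analogue of condition (3) of Theorem \ref{thm:characterization N=1}, exactly as in the proof of Theorem \ref{thm:universally_measurable_continuous}.
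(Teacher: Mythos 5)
The central step in your plan---pushing forward Haar measure $\lambda$ from $\prod_{n=1}^\infty C_2$ along $\pi\circ\phi$ and then replacing $H$ by the separable subgroup generated by its support---contains a circularity. In the proof of Theorem~\ref{jpr}, the map $\phi$ is defined only after the elements $g_n$ have been chosen, and the $g_n$ are chosen so as to avoid the conjugates $(g_{i_1}\cdots g_{i_m})\inv A_{\iota(n)}A_{\iota(n)}\inv(g_{i_1}\cdots g_{i_m})$, where the sets $A_j$ come from a countable cover of $\pi[G]$ by translates $Wh_j$ of a fixed identity neighbourhood $W\subseteq H$. In other words, you need a countable cover of $\pi[G]$ in order to build $\phi$, yet your pushforward argument is designed to produce that cover from $\phi$. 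If instead one fixes an arbitrary continuous injection $\phi\colon\prod C_2\to G$ chosen independently of $\pi$, the pushforward measure $(\pi\circ\phi)_*\lambda$ is supported on $\pi\big[\phi(\prod C_2)\big]$ and says nothing about $\pi$ on the rest of $G$; the closed subgroup $H_0$ generated by its support need not contain $\pi[G]$, so the reduction ``replace $H$ by $H_0$'' is not available.

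The paper's argument (given only for $G$ Polish, with the \v Cech-complete case explicitly deferred to Banakh) handles the non-separability of $H$ by a completely different device. Lemma~\ref{lemma:nonnull} shows $\pi^{-1}(U)$ is never left Haar null, and the key point is that instead of a countable cover one takes a locally finite, hence \emph{point-finite}, cover of $H$ by translates of a $d$-ball for a continuous left-invariant pseudometric $d$; pulling back along $\pi$ produces a point-finite family of preimages in $G$, and the Bukovsk\'y--Brzuchowski--Cicho\'n--Grzegorek--Ryll-Nardzewski theorem (Theorem~\ref{thm:munull}) then forces one of these preimages to be non-null without ever extracting a countable subfamily. From this one concludes $H$ is $\omega$-narrow (Lemma~\ref{lemme:omega-narrow}), and Guran's theorem embeds $H$ as a topological subgroup of a product $\prod_{i\in I}H_i$ of Polish groups, after which Theorem~\ref{thm:universally_measurable_continuous} applies coordinatewise. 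Your sketch contains no substitute for the point-finite-family step, and it is precisely this step, not a pushforward to a separable subgroup, that breaks the circularity your plan runs into. The \v Cech-complete generalisation of the covering construction itself is also less routine than your sketch suggests (such groups need not be metrisable, so infinite products converge only modulo a compact subgroup), but that is a separate issue from the gap above.
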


Whereas we will not concern ourselves with the extension to \v Cech-complete groups, we will show how to extend Theorem \ref{thm:universally_measurable_continuous} to arbitrary topological groups $H$. 
The proof of this, due to Banakh \cite{banakh2022automatic},  is based on  a result of J. Brzuchowski,  J. Cicho\'n, E. Jacek and C. Ryll-Nardzewski \cite{NonMeasurable} generalising an earlier result of L. Bukovsk\'y  \cite[Theorem A]{Bukovsky}.

\begin{thm}\cite{Bukovsky,NonMeasurable}\label{thm:munull}
Let $\mu$ be a Borel probability measure on a Polish space $X$. If $(A_i)_{i\in I}$ is a point-finite family of $\mu$-null sets such that $\bigcup_{i\in J}A_i$ is $\mu$-measurable whenever $J\subseteq I$, then $\bigcup_{i\in I}A_i$ is $\mu$-null.
\end{thm}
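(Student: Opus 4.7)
The plan is to argue by contradiction: assuming $\mu\bigl(\bigcup_{i\in I}A_i\bigr)>0$, we will construct a sub-family $J\subseteq I$ for which $\bigcup_{i\in J}A_i$ is not $\mu$-measurable, contradicting the hypothesis. The strategy reduces to a $1$-valent (pairwise disjoint) problem, and then exploits the separability and countable chain condition of the measure algebra of $(X,\mu)$ together with the ZFC fact that no measurable cardinal lies below $\mathfrak c$.

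The reduction goes as follows. Let $F(x)=\{i\in I:x\in A_i\}$, finite by the point-finite hypothesis. Fix a well-ordering of $I$ and set $D_i=A_i\setminus\bigcup_{j<i}A_j$. These $D_i$ are pairwise disjoint, $\mu$-null, and $\mu$-measurable (each $\bigcup_{j<i}A_j$ being a sub-union of $(A_i)$, hence measurable by hypothesis), with $\bigcup_iD_i=\bigcup_iA_i$. The delicate point — where the point-finite hypothesis enters essentially — is to verify that every $\bigcup_{i\in J}D_i=\{x:\min F(x)\in J\}$ is also $\mu$-measurable. This is shown by transfinite induction on the well-ordering of $I$, using finiteness of $F(x)$ to handle limit stages of uncountable cofinality via measurable sub-unions of the original family.

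In the resulting $1$-valent setting, the measurable selector $\phi(x)=\min F(x)$ yields a pushforward $\nu:=\phi_*\mu$, which is a $\sigma$-additive probability on $(I,\ku P(I))$ vanishing on singletons. Since $\mu$ is a Borel probability on a Polish space, the measure algebra $\mathbb A$ of $(X,\mu)$ is separable; therefore the sub-$\sigma$-algebra $\sigma(\{\phi^{-1}(J):J\subseteq I\})$ of $\mu$-measurable sets is essentially generated modulo null by countably many $\phi^{-1}(J_n)$. Define $\psi\colon X\to 2^{\mathbb N}$ by $\psi(x)_n=\mathbb 1_{J_n}(\phi(x))$, so that $\psi=\tilde\psi\circ\phi$ for $\tilde\psi\colon I\to 2^{\mathbb N}$, $\tilde\psi(i)_n=\mathbb 1_{J_n}(i)$. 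The pushforward $\psi_*\mu$ on $2^{\mathbb N}$ is forced to be atomless: any atom $\epsilon$ with $\nu(J_\epsilon)>0$ (where $J_\epsilon=\tilde\psi^{-1}(\epsilon)$) would render $\nu|_{J_\epsilon}$ a non-trivial $2$-valued $\sigma$-additive measure on $\ku P(J_\epsilon)$ vanishing on singletons, equivalent to a $\sigma$-complete non-principal ultrafilter on $J_\epsilon$; but such an object would force $|J_\epsilon|$ to be at least the first measurable cardinal, which exceeds $\mathfrak c\ge|I|$ in ZFC (measurable cardinals being strongly inaccessible).

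Finally, with $\psi_*\mu$ an atomless Borel probability on the standard Borel space $2^{\mathbb N}$, the measure isomorphism theorem furnishes a measure-preserving Borel isomorphism $g\colon(2^{\mathbb N},\psi_*\mu)\to([0,1],\text{Leb})$. For any non-Lebesgue-measurable Vitali set $V\subseteq[0,1]$, the pullback $g^{-1}(V)\subseteq 2^{\mathbb N}$ is not $\psi_*\mu$-measurable, and hence $\psi^{-1}(g^{-1}(V))\subseteq X$ is not $\mu$-measurable. But on the other hand, $\psi^{-1}(g^{-1}(V))=\phi^{-1}((g\circ\tilde\psi)^{-1}(V))=\bigcup_{i\in(g\circ\tilde\psi)^{-1}(V)}D_i$, which is $\mu$-measurable by the reduced hypothesis, yielding the desired contradiction. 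The main technical obstacle is the reduction step: verifying measurability of $\bigcup_{i\in J}D_i$ for arbitrary $J\subseteq I$ is where the point-finite assumption plays its essential role, since without it the subsequent ccc and isomorphism-theoretic arguments lack the measurability foundation they require.
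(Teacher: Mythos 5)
The paper states this theorem as a citation to Bukovsk\'y and to Brzuchowski--Cicho\'n--Grzegorek--Ryll-Nardzewski and gives no proof, so there is nothing internal to compare against; I will therefore evaluate your argument on its own terms. The overall architecture (restrict to the union, push the measure forward to a $\sigma$-additive set function on $\ku P(I)$, exploit separability of the measure algebra to factor through $2^{\N}$, kill atoms via the measurable-cardinal argument, then invoke a non-measurable set) is the right circle of ideas and is essentially Bukovsk\'y's argument for the \emph{partition} case. But there is a genuine gap at the very step you yourself flag as delicate.

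The reduction to a disjoint family is not established. You set $D_i=A_i\setminus\bigcup_{j<i}A_j$ and assert that every $B_J:=\bigcup_{i\in J}D_i=\{x:\min F(x)\in J\}$ is $\mu$-measurable, saying this follows ``by transfinite induction on the well-ordering of $I$, using finiteness of $F(x)$ to handle limit stages of uncountable cofinality.'' The successor and countable-cofinality limit cases are fine, but at a limit $\gamma$ of uncountable cofinality you are faced with $B_J=\bigcup_{\delta<\gamma}B_{J\cap\delta}$, an increasing union of measurable sets over an uncountable index, and point-finiteness of $F$ gives no obvious way to replace this by a countable union or to bound it by a set of the form $\bigcup_{i\in J'}A_i$. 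The hypothesis only yields measurability of the \emph{monotone} sets $E_J=\bigcup_{i\in J}A_i=\{x:F(x)\cap J\neq\emptyset\}$, and the passage from ``$F(x)$ meets $J$'' to ``$\min F(x)\in J$'' is exactly the information the hypothesis does not control. Already in the $2$-valent case (every $F(x)=\{\alpha(x),\beta(x)\}$ with $\alpha<\beta$) the hypothesis tells you, measurably, which of the three configurations $F(x)\subseteq J$, $F(x)\subseteq I\setminus J$, or ``one on each side'' occurs, but in the last case it does not tell you \emph{which} side $\alpha(x)$ lies on, which is what $B_J$ asks; I see no way to extract this from the $E_{J'}$ alone. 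Note also that since, after the reduction, your argument uses only disjointness and never point-finiteness again, the reduction step carries the full weight of the generalisation from partitions (Bukovsk\'y) to point-finite families (BCGR), so it cannot be a routine induction. In fact a cleaner candidate $D_i'=A_i\setminus\bigcup_{j\neq i}A_j$ does give measurable sub-unions ($\bigcup_{i\in J}D_i'=E_J\setminus E_{I\setminus J}$), but then $\bigcup_i D_i'$ can be empty while $\bigcup_i A_i$ is conull, so it does not see all of the union. Some genuinely new device is needed, and the BCGR paper supplies one that your sketch does not.

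A secondary point, not a gap but worth making explicit: the implication ``$g^{-1}(V)$ not $\psi_*\mu$-measurable, hence $\psi^{-1}(g^{-1}(V))$ not $\mu$-measurable'' is equivalent to ``$\psi^{-1}(W)$ $\mu$-measurable implies $W$ $\psi_*\mu$-measurable,'' which is true but requires an argument. After replacing $\psi$ by an a.e.\ equal Borel map, one sandwiches: if $B_1\subseteq\psi^{-1}(W)\subseteq B_2$ are Borel with $\mu(B_2\setminus B_1)=0$, then $C_1=\psi[B_1]$ (analytic) and $C_2=2^{\N}\setminus\psi[X\setminus B_2]$ (coanalytic) satisfy $C_1\subseteq W\subseteq C_2$, are universally measurable, and $\psi_*\mu(C_2\setminus C_1)\le\mu(B_2\setminus B_1)=0$. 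With this in hand, one can in fact conclude directly that \emph{every} $W\subseteq 2^{\N}$ is $\psi_*\mu$-measurable and hence every subset of $[0,1]$ is Lebesgue measurable, avoiding the explicit Vitali pullback. You should also record, when invoking the measurable-cardinal bound, that after discarding the $i$ with $A_i=\emptyset$ one has $|I|\le\mathfrak c$ (each $x$ contributes a finite $F(x)\subseteq I$ and $\bigcup_{x\in X}F(x)=I$), which is needed for the inaccessibility argument.
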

Recall that the family $(A_i)_{i\in I}$ is {\em point-finite} provided that every $x\in X$ belongs to only finitely many of the $A_i$. 
In order to apply this in our setting, let us recall the following concept due to Christensen. 

\begin{defi}
Let $G$ be a Polish group and $A \subseteq G$ be a universally measurable set. We say that $A$ is \emph{left Haar null} if there exists a Borel probability measure $\mu$ on $G$ such that $\mu(gA)=0$ for all $g \in G$.
\end{defi}

\begin{lemme}\label{lemma:nonnull}
Let $G$ be a Polish group, $H$  an arbitrary topological group and $G \maps\pi H$ a universally measurable homomorphism. Then, for all identity neighborhoods $U \subseteq H$, the inverse image 
$\pi^{-1}(U)$ is not left Haar null in $G$.
\end{lemme}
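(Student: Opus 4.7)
Suppose for contradiction that $\pi^{-1}(U)$ is left Haar null, so there is a Borel probability measure $\mu$ on $G$ with $\mu(g\pi^{-1}(U))=0$ for every $g\in G$. The plan is to apply Theorem~\ref{thm:munull} to a point-finite family of $\mu$-null, universally measurable subsets of $G$ whose union covers $G$; since $\mu$ is a probability measure, this will force $\mu(G)=0$ and yield the desired contradiction.

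The natural first step is to fix a symmetric open identity neighbourhood $V\subseteq H$ with $V^{2}\subseteq U$ and, using Zorn's lemma, extract a maximal subset $T\subseteq G$ such that the images $\{\pi(t)V:t\in T\}$ are pairwise disjoint in $H$. By maximality, every $g\in G$ has $\pi(g)\in\pi(t)V^{2}\subseteq\pi(t)U$ for some $t\in T$, giving the cover $G=\bigcup_{t\in T}t\pi^{-1}(U)$. Setting $\mathcal{F}=\{t\pi^{-1}(U):t\in T\}$, each member is $\mu$-null by the Haar-null hypothesis, and each subunion
\[
\bigcup_{t\in J}t\pi^{-1}(U)=\pi^{-1}\Bigl(\bigcup_{t\in J}\pi(t)U\Bigr)
\]
is a $\pi$-preimage of an open subset of $H$, hence universally measurable.

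The hard part is verifying the point-finiteness of $\mathcal{F}$: an $x$ lying in both $t\pi^{-1}(U)$ and $t'\pi^{-1}(U)$ for distinct $t,t'\in T$ forces $\pi(t)^{-1}\pi(t')\in UU^{-1}$, while the pairwise disjointness of the $\pi(t)V$'s only gives $\pi(t)^{-1}\pi(t')\notin V^{2}$, which is strictly weaker since $V^{2}\subseteq UU^{-1}$. I would address this by refining the choice of $T$: pick an open symmetric $W$ with $W^{2}\supseteq UU^{-1}$ (for instance $W=UU^{-1}$ itself) and take $T$ maximal with the $\pi(t)W$ pairwise disjoint, so that distinct $t,t'\in T$ satisfy $\pi(t)^{-1}\pi(t')\notin W^{2}\supseteq UU^{-1}$. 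This promotes $\mathcal{F}$ to a pairwise disjoint (hence point-finite) family, at the price of relaxing the covering inequality to $G=\bigcup_{t\in T}t\pi^{-1}(W^{2})$; reconciling this larger cover with the null-set condition (so that the covering pieces remain subsets of translates of $\pi^{-1}(U)$) is the principal technical obstacle, and is presumably where the density of $\pi(G)$ in $H$ together with the universal measurability of $\pi$ interact nontrivially. Once a family satisfying all three hypotheses of Theorem~\ref{thm:munull} is produced, the theorem yields $\mu(G)=0$, contradicting $\mu(G)=1$, and the lemma follows.
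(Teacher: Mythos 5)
Your overall framework is the same as the paper's: cover $G$ by a point-finite family of $\mu$-null, universally measurable sets, each a $\pi$-preimage of an open subset of $H$, and then invoke Theorem~\ref{thm:munull} to get a contradiction. But the gap you flag at the end is genuine and your proposed repair does not close it. A maximal $T$ with $\{\pi(t)V : t\in T\}$ pairwise disjoint does give a cover $G=\bigcup_{t\in T} t\pi^{-1}(U)$ by null sets, but this cover need not be point-finite: in a general left-invariant pseudometric on $H$ there can be infinitely many $V$-separated points $\pi(t)$ all lying within $U$ of a fixed $\pi(x)$ (this is exactly where the naive Vitali argument fails in spaces that are not proper). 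Switching to $W$ with $W^2\supseteq UU^{-1}$ does make the translates $t\pi^{-1}(U)$ pairwise disjoint, but then the \emph{cover} becomes $G=\bigcup_t t\pi^{-1}(W^2)$, and $\pi^{-1}(W^2)$ is not known to be $\mu$-null, so you have traded one failed hypothesis of Theorem~\ref{thm:munull} for another. There is no choice of Zorn-type maximal separated set that simultaneously gives covering, pairwise disjointness (or even point-finiteness), and nullity of the pieces.

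The paper's proof replaces the Zorn/Vitali step with paracompactness, which is precisely the missing idea. After reducing to $\pi$ surjective, one uses the Birkhoff--Kakutani theorem to produce a continuous left-invariant pseudometric $d$ on $H$ whose open unit ball $B$ is contained in $U$. The metric space quotient $H/d$ is metrisable, hence paracompact, so the open cover $(hB)_{h\in H}$ admits a \emph{locally finite} (hence point-finite) open refinement $\mathcal V$. Each $V\in\mathcal V$ lies in some $hB=\pi(g)B$, so $\pi^{-1}(V)\subseteq g\,\pi^{-1}(U)$ is $\mu$-null; every subunion $\bigcup_{V\in\mathcal V'}\pi^{-1}(V)=\pi^{-1}\bigl(\bigcup\mathcal V'\bigr)$ is the preimage of a $d$-open set and hence universally measurable. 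Now Theorem~\ref{thm:munull} forces $\mu(G)=0$, contradicting $\mu(G)=1$. So the high-level plan you describe is correct, but the technical obstacle you point to is not resolvable by refining the Zorn construction; the locally finite refinement provided by paracompactness is what does the work.
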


\begin{proof}Assume, without loss of generality, that $\pi$ is surjective and suppose for a contradiction that there exists an identity neighborhood $U\subseteq H$ such that $\pi^{-1}(U)$ is left Haar null. Then there exists a Borel probability measure $\mu$ on $G$ so that 
$$
\mu\big(g\cdot \pi^{-1}(U)\big)=0
$$
for all $g \in G$. 

By the Birkhoff--Kakutani metrisation theorem, we may find a continuous left-invariant pseudometric $d$ on $H$ such that $U$ contains the open ball
$$
B=\{g\in G\del d(g,1)<1\}.
$$
So, by the paracompactness of the metric space quotient $H/d$, there exists a locally finite cover $\ku{V}$ of $H$ of $d$-open sets refining the open cover $(hB)_{h \in H}$. In particular, $\ku V$ is a point-finite family and hence so is the family
$$
\ku W=\{\pi\inv (V)\del V\in \ku V\}.
$$
Note also that, for every $\pi\inv(V)\in \ku W$, there is some $h=\pi(g)\in H$ so that 
$$
\pi\inv (V)\subseteq \pi\inv (hB)\subseteq \pi\inv (hU)=g\cdot \pi\inv (U),
$$
implying that $\pi\inv (V)$ is $\mu$-null. Now, as 
$$
G=\bigcup\ku W
$$
has $\mu$-measure $1$, Theorem \ref{thm:munull} implies that there is some subfamily $\ku V'\subseteq \ku V$ so that
$$
\bigcup\big\{\pi\inv(V) \del V\in \ku  V'\big\}=\pi\inv \big(\bigcup \ku V'\big)
$$
is non $\mu$-measurable. However, since $\bigcup \ku V'$ is open in $H$, this contradicticts the universal measurability of $\pi$.
\end{proof}

\begin{defi}
A topological group $H$ is said to be $\omega$\emph{-narrow} if, for every identity neighbourhood $U$, there exist a countable set $C \subseteq H$ so that $H = CU$.
\end{defi}
Let us also observe that, by an argument as in the implication (3)$\saa$(2) in Theorem \ref{thm:compactness}, for $H$ to be $\omega$-narrow, it suffices that, for all identity neighbourhoods $U$, there is a countable set $C$ and a finite set $F$ so that $H=CUF$.

The class of $\omega$-narrow groups was introduced by I.I. Guran \cite{Guran}, who also established the fundamental characterisation of these groups as being the topological subgroups of direct products of Polish groups. 

\begin{lemme}\label{lemme:omega-narrow}
Let $G$ be a Polish group, $H$ a topological group and $G\maps\pi H$ a universally measurable epimorphism. Then $H$ is $\omega$-narrow.
\end{lemme}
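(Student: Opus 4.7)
I will argue by contradiction. Assume $H$ is not $\omega$-narrow, so there is an open identity neighbourhood $U\subseteq H$ with $H\ne CU$ for every countable $C\subseteq H$. By the Birkhoff-Kakutani theorem, pick a continuous left-invariant pseudometric $d$ on $H$ whose unit ball is contained in $U$, and let $\pi_d\colon H\to(\tilde H,d)$ be the induced metric quotient.

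My first observation will be that $(\tilde H,d)$ is not separable: otherwise a countable $d$-dense subset of $\tilde H$ would lift to a countable $C\subseteq H$ with $H=C\cdot\{h:d(h,1)<1\}\subseteq CU$, contradicting the choice of $U$. A non-separable metric space admits, for some $\epsilon>0$, an uncountable $\epsilon$-separated subset (take a maximal such family and observe that maximality would force countability if none existed), so I extract $\{h_\alpha\}_{\alpha<\omega_1}\subseteq H$ with $d(h_\alpha,h_\beta)\ge\epsilon$ for $\alpha\ne\beta$. Setting $V=\{h:d(h,1)<\epsilon/3\}$, the identity $d(h_\alpha,h_\beta)=d(1,h_\alpha^{-1}h_\beta)$ and the triangle inequality show that the left translates $\{h_\alpha V\}_{\alpha<\omega_1}$ form a pairwise disjoint family of open subsets of $H$.

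Using the surjectivity of $\pi$, fix preimages $g_\alpha\in\pi^{-1}(h_\alpha)$, and set $A_\alpha:=g_\alpha\pi^{-1}(V)=\pi^{-1}(h_\alpha V)$. The family $\{A_\alpha\}_{\alpha<\omega_1}$ is then a pairwise disjoint (hence point-finite) collection of universally measurable subsets of $G$, and for every $J\subseteq\omega_1$ the union $\bigcup_{\alpha\in J}A_\alpha=\pi^{-1}\bigl(\bigcup_{\alpha\in J}h_\alpha V\bigr)$ is universally measurable, being the $\pi$-preimage of an open subset of $H$. The hypotheses of Theorem \ref{thm:munull} are therefore in place for the family $\{A_\alpha\}_{\alpha<\omega_1}$.

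The closing step, and the main technical obstacle, is to combine these ingredients with Theorem \ref{thm:munull} to contradict Lemma \ref{lemma:nonnull}. Concretely, I aim to produce a Borel probability measure $\mu$ on $G$ with $\mu(g\pi^{-1}(V))=0$ for every $g\in G$, which would witness that $\pi^{-1}(V)$ is left Haar null and directly contradict Lemma \ref{lemma:nonnull}. The starting observation is that for any Borel probability $\mu$ on $G$, disjointness of the $A_\alpha$ forces all but countably many of them to be $\mu$-null, and Theorem \ref{thm:munull} then makes the union of this uncountable subfamily $\mu$-null as well. To upgrade this into genuine left-Haar-nullity of $\pi^{-1}(V)$, I expect the argument to use the paracompactness of $(\tilde H,d)$ to refine the cover by left translates of $V$ into a locally finite open cover of $H$, pull it back through $\pi$ to a point-finite cover of $G$ by universally measurable sets each contained in a translate of $\pi^{-1}(V)$, and then invoke Theorem \ref{thm:munull} on this pulled-back cover -- with a $\mu$ adapted to the geometry of the disjoint slabs $\{A_\alpha\}$ -- to rule out every left translate of $\pi^{-1}(V)$ simultaneously. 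I expect propagating the nullity from the slabs $g_\alpha\pi^{-1}(V)$ to every left translate $g\pi^{-1}(V)$ to be the delicate point of the proof.
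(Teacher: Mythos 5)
The terminal step of your argument pursues an impossible goal. You want to produce a Borel probability measure $\mu$ on $G$ with $\mu\big(g\,\pi^{-1}(V)\big)=0$ for every $g\in G$, so that $\pi^{-1}(V)$ is left Haar null, and then contradict Lemma~\ref{lemma:nonnull}. But Lemma~\ref{lemma:nonnull} holds unconditionally for any universally measurable homomorphism and any identity neighbourhood of $H$: no such $\mu$ exists, whether or not $H$ is $\omega$-narrow. Worse, the paracompactness/point-finiteness mechanism you gesture at---refine $\{hV\}_{h\in H}$ to a locally finite open cover of $H$, pull back to a point-finite universally measurable cover of $G$, apply Theorem~\ref{thm:munull}---is precisely the paper's \emph{proof} of Lemma~\ref{lemma:nonnull}: if every member of the pulled-back cover were $\mu$-null, Theorem~\ref{thm:munull} would force $\mu(G)=0$, which is absurd. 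So the ``delicate point'' you flag is not a matter of effort but a genuine obstruction. Your uncountable $\epsilon$-separated family and the disjoint $A_\alpha$ are correctly constructed, but they do not advance toward left Haar nullity: disjointness makes all but countably many $A_\alpha$ null for any given $\mu$, yet nothing forces the \emph{same} $\mu$ to annihilate every translate $g\pi^{-1}(V)$, and Lemma~\ref{lemma:nonnull} guarantees that no single $\mu$ ever can.

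The intended use of Lemma~\ref{lemma:nonnull} is positive rather than negative. Choose $V$ with $V^{-1}V\subseteq U$ and set $A=\pi^{-1}(V)$. Since $A$ is not left Haar null, the Steinhaus-type consequence \cite[Theorem 2.8]{rosendal-BSL} gives $g_1,\ldots,g_m\in G$ such that $W=\bigcup_{i=1}^m g_i^{-1}A^{-1}Ag_i$ is an identity neighbourhood in $G$. Separability of $G$ yields $f_1,f_2,\ldots\in G$ with $G=\bigcup_j f_jW$, and pushing forward through $\pi$ gives
$$
H=\pi[G]\;\subseteq\;\bigcup_{i,j}\pi\big(f_jg_i^{-1}\big)\,U\,\pi(g_i),
$$
a cover of the form $CUF$ with $C$ countable and $F$ finite. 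The remark just before the lemma (via the argument in the implication $(3)\Rightarrow(2)$ of Theorem~\ref{thm:compactness}) lets one choose $U$ so that even covers of this form are impossible when $H$ is not $\omega$-narrow, yielding the contradiction. Your setup does not feed into this route, and you would need to restructure the proof around it.
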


 \begin{proof}
Assume towards a contradiction that $H$ is not $\omega$-narrow and find an identity neighbourhood $U\subseteq H$  so that, for all countable sets $C$ and finite sets $F$,
$$
H \neq CUF.
$$ 
Also, take $V$ neighborhood of $1_G$ such that $V^{-1}V \subseteq U$. By Lemma \ref{lemma:nonnull}, the universally measurable set
$$
A=\pi^{-1}(V)
$$ 
is not left Haar null in $G$. So by \cite[Theorem 2.8]{rosendal-BSL} there are $g_{1}, \dots, g_{m}\in G$ so that
$$
W=\bigcup_{i=1}^m g_{i}^{-1}A\inv Ag_{i}
$$
is an identity neighborhood in $G$. In particular, as $G$ is separable, there are $f_1,f_2,\ldots\in G$ so that $G=\bigcup _jf_jW$, whereby
$$
H=\pi[G]=\bigcup_j\pi(f_j)\pi[W]\subseteq\bigcup_j\bigcup_i\pi(f_jg_i\inv)U\pi(g_i),
$$
contradicting the choice of $U$.
 \end{proof}

The proof of Theorem \ref{thm:banakh2022automatic} for the case when the domain group $G$ is Polish is now straightforward. Indeed, suppose $G\maps \pi H$ is a universally measurable homomorphism from a Polish group $G$ into a topological group $H$. Without loss of generality, $\pi$ is surjective, whence, by Lemma \ref{lemme:omega-narrow}, $H$ is $\omega$-narrow. It thus follows from the result of  I.I. Guran \cite{Guran} that $H$ embeds as a topological subgroup of a product
$$
\prod_{i \in I}H_i
$$
of Polish groups $H_i$. Composing $\pi$ with the coordinate projections, we obtain universally measurable homomorphisms $G\maps {\pi_i}H_i$, which are continuous by Theorem \ref{thm:universally_measurable_continuous}. As a function into a product of topological spaces is continuous if and only if each composition with a coordiate projection is continuous, we conclude that $\pi$ is itself continuous.


\section{A quadrichotomy for homomorphisms}\label{sec:quadri}
Theorems \ref{thm:characterization N=1} and \ref{thm:characterization N compact} provide exact criteria for when the characteristic group $N$ is either trivial or compact. As it turns out, the analysis of $N$ also serves to elucidate the question of how much choice is needed to produce discontinuous homomorphisms between Polish groups. 

It is well-known that some form of the axiom of choice is needed to produce discontinuous homomorphisms between Polish groups. For example, in R.M. Solovay's model \cite{solovay} in which every set of reals is Lebesgue mesurable, every subset of a Polish space also has the property of Baire and thus all homomorphisms between Polish groups are Baire measurable. By simple argument using Pettis' lemma, we then find that all homomorphisms between Polish groups are continuous. Whereas, Solovay's original argument needed an inaccessible cardinal number, it was shown by S. Shelah \cite{shelah} that, in order to get a model of set theory in which every subset of a Polish space has the property of Baire, the inaccessible cardinal can be avoided. 

In order to discuss and compare consequences of the axiom of choice, we must of course work in some suitably weak background theory, which we here take to be ZF+DC. Here DC is the {\em axiom of dependent choice} stated below.
\begin{defi}[Axiom of dependent choice] 
Let  $X$ be a nonempty set and $R$ a  binary relation on $X$ such that, for every $x\in X$, there is some $y\in X$ with $xRy$. 
Then there exists a sequence $(x_n)_{n=1}^\infty$ such that $x_nRx_{n+1}$ for all
$n\in \N$. 
\end{defi}
Using ZF+DC as a background theory is appropriate, as this suffices to establish the basic concepts and results of analysis that do not directly involve choice. Thus, for example, with ZF+DC we can prove the Baire category theorem and perform most arguments involving convergence of sequences, but we cannot prove the existence of non-principal ultrafilters on $\N$ nor establish the existence of non-measurable sets of reals.


\subsection{Vitali sets and chromatic numbers of Hamming graphs} 
For a fixed number $k\geqslant 2$, the \emph{Hamming graph} on the abelian group $\prod_{n=1}^\infty C_k $ is the graph with vertex set $\prod_{n=1}^\infty C_k $ so that two vertices $z,w \in \prod_{n=1}^\infty C_k $ form an edge if and only if they differ exactly one coordinate $n \in \N$, i.e., if $z-w$ belongs to one of the factor subgroups $C_k $ in the product $\prod_{n=1}^\infty C_k $. 

A \emph{colouring} of the Hamming graph is just a function $c\colon \prod_{n=1}^\infty C_k  \rightarrow X$ into some set $X$ so that $c(z) \neq c(w)$ whenever $\{z,w\}$ is an edge. The \emph{chromatic number} of the Hamming graph on $\prod_{n=1}^\infty C_k $, which we will denote by
$$
\chi(k),
$$
is then the smallest cardinality $\kappa$ so that there is a graph colouring $c\colon \prod_{n=1}^\infty C_k  \rightarrow X$ into a set $X$ of cardinality $\kappa$. Observe that, since each factor subgroup $C_k $ is a clique of size $k$, we have that $\chi(k)\geqslant k$ for all $k$.

Now, suppose $c \colon \prod_{n=1}^\infty C_k  \rightarrow X$ is some graph colouring and that $m=k^n$ for some $n \geqslant 1$. Fix a bijection $\phi : C_m \rightarrow \prod_{i=1}^n C_k $ and let $(\cdot)_i =  \text{proj}_i \circ \phi$ for $i=1, \dots, n$. We then define a function $c_i\colon \prod_{n=1}^\infty C_m \rightarrow X$ by letting
$$
c_i(z)=c((z_1)_i,(z_2)_i,(z_3)_i,\dots)
$$
and let $C \colon \prod_{n=1}^\infty C_m \rightarrow X^n$ be given by $C(z)=(c_1(z),\dots, c_n(z))$. Then $C$ is also a graph colouring, which shows that
$$
\chi(k^n)\leqslant \chi(k)^n
$$
for all $n \geqslant 1$. In particular, this shows that, if $\chi(k)=k$ for some $k$, then actually $\chi(k)=k$ for infinitely many $k$. Observe also that, if $l\leqslant k$, then the Hamming graph on $\prod_{n=1}^\infty C_l$ can be seen as an induced subgraph of that on $\prod_{n=1}^\infty C_k $ , whence $\chi(l)\leqslant \chi (k)$. Since $\chi(k^n)\leqslant \chi(k)^n$, we thus see that $\chi(k)$ is either infinite for all $k$ or finite for all $k$. 

Now, consider the coset equivalence relation  on $\prod_{n=1}^\infty C_k $, denoted $\approx_k$, induced by the subgroup $\bigoplus_{n=1}^\infty C_k $. A \emph{transversal} for $\approx_k$ is a set $T \subseteq \prod_{n=1}^\infty C_k $ containing exactly one representative of each equivalence class. In case $T$ is such a transversal, it can be seen that $\chi(k)=k$. Indeed, a colouring $c: \prod_{n=1}^\infty C_k  \rightarrow C_k $ is defined by letting
$$
c(z)=\sum_{i=1}^\infty(z-\hat{z})_i
$$
where $\hat{z} \in T$ is the unique representative of the equivalence class of $z$ in $T$. Here we note that, because $z-\hat z\in \bigoplus_{n=1}^\infty C_k $, the previous sum is actually  finite.

Similarly, if $\ku U$ is a non-principal ultrafilter on $\N$, then the formula
$$
c(z)=\lim_{n\to \ku U}\;\sum_{i=1}^n z_i.
$$
defines a discontinuous group homomorphism $c: \prod_{n=1}^\infty C_k  \rightarrow C_k $ that is simultaneously a graph colouring. Therefore, if there is a non-principal ultrafilter on $\N$, then $\chi(k)=k$ for all $k\geqslant 2$.

A \emph{Vitali set} is a set $T \subseteq \R$ intersecting every translate of $\Q$ in a single point, that is,  a Vitali set is a set of representatives of the coset equivalence relation induced by $\Q$ as a subgroup of $\R$. For each $k \geqslant 2$, since both $\approx_k$ and the equivalence relation on $\R$ of belonging to the same translate of $\Q$ are hyperfinite equivalence relations, they are Borel bireducible and thus, in every model of ZF+DC, there is a Vitali set if and only if $\approx_k$ admits a transversal. 

Finally, let us observe that, if $c\colon \prod_{n=1}^\infty C_k  \rightarrow X$  is a graph colouring with some countable palette $X$, then some colour class $A=c\inv(x)$ must be non-meagre. By Pettis' lemma, we have $U(A)-U(A)\subseteq A-A$ and hence, if $A$ has the property of Baire, we see that $U(A)\neq \tom$ and thus $A-A$ is an identity neighbourhood in $ \prod_{n=1}^\infty C_k$. From this it would follow that there are two elements of $A$ differing in a single coordinate, contradicting that $c$ is a graph colouring. In other words, if $\chi(k)$ is countable for any $k\geqslant 2$, then there is a subset of the Polish space $ \prod_{n=1}^\infty C_k$ that fails to have the property of Baire. This means that we now have a sequence of implications holding under ZF+DC.
\begin{align*}
    \text{There is a Vitali}&\text{ set or a non-principal ultrafilter on $\N$}\\
    & \Rightarrow \chi(k)=k \text{ for all }k \\ 
    & \Rightarrow \chi(k)=k \text{ for some or, equivalently, infinitely many }k \\ 
    & \Rightarrow \chi(k)<\infty \text{ for all or, equivalently, some } k\\
    & \Rightarrow \text{there is a subset of a Polish space without the property of Baire}.
\end{align*}


\subsection{The quadrichotomy} For the next lemma, a subset $A$ of a group $G$ is said to be \emph{right $\sigma$-syndetic} provided it covers $G$ by countably many right translates, that is, if  $G = \bigcup_{n=1}^\infty Af_n$ for some sequence $(f_n)_{n=1}^\infty $ in $G$.

\begin{lemme}[$\text{ZF+DC}$]\cite{rosendal-Pi}\label{lemma15} 
    Suppose that there is no Vitali set. Then, for every right $\sigma$-syndetic subset $A$ of a Polish group $G$ and identity neighbourhood $U \subseteq G$, there is a finite set $E \subseteq U$ so that
    \begin{align*}
        EAA^{-1}E
    \end{align*}
    has a nonempty interior.
\end{lemme}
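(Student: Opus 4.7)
The plan is to argue by contradiction: assuming the conclusion fails, we construct a transversal for the coset equivalence $\approx_2$ on $\prod_{n=1}^\infty C_2$, which by the equivalence noted before the lemma produces a Vitali set and contradicts the hypothesis. The scheme mimics Christensen's diagonal construction in Theorem \ref{jpr}, but replaces the Steinhaus--Weil step (which requires measurability) by a direct use of the failure hypothesis to enforce a combinatorial separation property.

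Concretely, we fix a compatible left-invariant metric $d$ on $G$, choose $r_0>0$ with $\{g\in G\del \norm g<r_0\}\subseteq U$, and write $G=\bigcup_n Af_n$. Using DC we inductively pick $g_n\in U$ with $\norm{g_n}<r_0\cdot 2^{-n-1}$ and
\[
g_n\notin F_{n-1}\inv AA\inv F_{n-1}, \qquad\text{where}\qquad F_n=\{g_1^{z_1}\cdots g_n^{z_n}\del z\in\{0,1\}^n\}.
\]
The size bound keeps $F_{n-1}\cup F_{n-1}\inv$ inside $U$, so, applied to the finite set $E=F_{n-1}\cup F_{n-1}\inv\subseteq U$, the failure hypothesis yields ${\sf int}(EAA\inv E)=\tom$. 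In particular the subset $F_{n-1}\inv AA\inv F_{n-1}$ has dense complement, which meets the open ball of radius $r_0\cdot 2^{-n-1}$ around $1$, giving a valid $g_n$. The same size bound makes $\phi(z)=\lim_N g_1^{z_1}\cdots g_N^{z_N}$ a well-defined continuous map $\prod_{n=1}^\infty C_2\maps\phi G$.

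A short induction on $n$ shows that any two distinct $p,p'\in F_n$ satisfy $p(p')\inv\notin AA\inv$: the cases $p,p'\in F_{n-1}$ and $p,p'\in F_{n-1}g_n$ reduce to the induction hypothesis (using $(pg_n)(p'g_n)\inv=p(p')\inv$), while the mixed case is equivalent, after rearrangement, to the avoidance condition on $g_n$. Hence for distinct $z\approx_2 w$, choosing $N$ past the finite support of $z-w$ gives $\phi(z)\phi(w)\inv=p(p')\inv\notin AA\inv$, so $\phi(z)$ and $\phi(w)$ cannot lie in a common right translate of $A$. Setting $n(z)=\min\{n\del \phi(z)\in Af_n\}$, the function $n(\cdot)$ is therefore injective on each $\approx_2$-class, so
\[
T=\Big\{z\in\prod_{n=1}^\infty C_2\,\Big|\, n(z)=\min_{z'\approx_2 z}n(z')\Big\}
\]
meets each $\approx_2$-class in exactly one point. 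This transversal produces a Vitali set and contradicts the hypothesis.

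The main obstacle is the design of the inductive step: finding a single, clean condition on $g_n$ that preserves $AA\inv$-separation of the doubling family $F_n$. The saving observation is that the only new pairs at step $n$ have the form $(p,p'g_n)$ with $p,p'\in F_{n-1}$, and $p(p'g_n)\inv=pg_n\inv p'\inv\in AA\inv$ is equivalent to $g_n\in (p')\inv AA\inv p\subseteq F_{n-1}\inv AA\inv F_{n-1}$. The failure hypothesis guarantees exactly that this set has empty interior, making the inductive choice feasible. The remaining pieces --- continuity of $\phi$, injectivity of $n(\cdot)$ on $\approx_2$-classes, and the ZF+DC-definability of $T$ (a minimum in $\N$ requires no choice) --- are then routine.
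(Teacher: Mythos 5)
Your proof is correct and follows essentially the same path as the paper's: negate the conclusion, use the empty-interior hypothesis to diagonally construct $g_n \in U$ avoiding $F_{n-1}\inv AA\inv F_{n-1}$ (exactly the paper's condition (3) with $E = F_{n-1}\cup F_{n-1}\inv$), deduce that $\phi(z)\phi(w)\inv \notin AA\inv$ for distinct $\approx_2$-equivalent $z,w$, and extract a transversal from the partial transversals $\phi\inv(Af_m)$ (your $\min$-formulation is the same as the paper's iterated set difference). The only place both you and the paper gloss is the convergence of the infinite products, which needs the $g_n$ to be chosen small with respect to a complete compatible metric as well, since a left-invariant metric on a Polish group need not be complete; this is the standard "fast enough" caveat also left implicit in the paper.
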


\begin{proof}Assume that $EAA^{-1}E$ has empty interior for every finite set $E \subseteq U$. 
By our hypothesis, $G = \bigcup_{n=1}^\infty Af_n$ for some sequence $(f_n)_{n=1}^\infty$ in $G$. Recursively and following the proof of Theorem \ref{jpr} construct a sequence $(g_n)_{n=1}^\infty$ in $U$ satisfying (1) and (3) in Theorem \ref{jpr},  so that the associated function 
$$
\phi \colon \prod_{n=1}^\infty C_2 \rightarrow G
$$ 
is a continuous injection and thus a homeomorphism with its image, and such that for all $i_1<\dots < i_n<k$ and $j_1<\dots<j_m<k$ we have
$$
g_k \notin g_{i_n}^{-1}\dots g_{i_1}^{-1}\cdot AA^{-1} \cdot g_{j_1}\dots g_{j_m}.
$$

    Assume that $z,w\in \prod_{n=1}^\infty C_2$ are $\approx_2$-equivalent but distinct, say $z_k=1, w_k=0$ and $z_n=w_n$ for all $n>k$. Then we can write
    \begin{align*}
        \phi(z) = g_{i_1}\dots g_{i_n}g_kh, \hspace{5mm} \phi(w)=g_{j_1}\dots g_{j_m}h
    \end{align*}
    for some $i_1<\dots<i_n<k, j_1<\dots<j_m<k$ and $h \in G$. Thus by condition (3)
    \begin{align*}
        \phi(z)\phi(w)^{-1} = g_{i_1}\dots g_{i_n}\cdot g_k \cdot g^{-1}_{j_m}\dots g^{-1}_{j_1} \notin AA^{-1}.
    \end{align*}
In particular, this shows that each set $B_m = \phi^{-1}(Af_m)$ can only intersect an $\approx_2$-equivalence class in a single point and hence is a partial $\approx_2$-transversal.
Since $\prod_{n=1}^\infty C_2 = \bigcup_{m=1}^\infty B_m$, this means that a transversal $T \subseteq \prod_{n=1}^\infty C_2$ for $\approx_2$ can be defined by
    \begin{align*}
        T = \bigcup_{m=1}^\infty \Big(B_m \setminus \left[\bigcup_{l=1}^{m-1}B_l\right]_{\approx_2}\Big).
    \end{align*}
    Thus, if the conclusion of the lemma fails, there is a transversal for $\approx_2$ and hence also a Vitali set.
\end{proof}

\begin{lemme}[$\text{ZF+DC}$]\cite{rosendal-Pi}\label{lemma16}
Suppose $G\overset{{\pi}}{\longrightarrow}H$ is a homomorphism between Polish groups so that $N$ is compact and assume that $V\subseteq H$ is a  symmetric open identity neighbourhood so that $hVh^{-1} =V$ for all $h \in N$. Let $p$ be minimal so that
$$
N\subseteq VF
$$
for some set $F\subseteq N$ with $|F|=p$
and let $k$ be maximal so that there are $h_1,\ldots, h_k\in  N$ and  an identity neighbourhood $W\subseteq H$ for which
\maths{
VWh_i\cap VWh_j=\tom
}
for all $i\neq j$.
Then 
$$
\chi(k)\leqslant p.
$$
\end{lemme}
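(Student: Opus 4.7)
The plan is to exhibit a proper coloring $c\colon \prod_{n=1}^\infty C_k \to F$ of the Hamming graph, which directly yields $\chi(k)\leqslant p$. The first step extracts a purely algebraic consequence of the hypotheses: since $1\in W$ and $V$ is symmetric, $V^2\subseteq W^{-1}V^2W$, so the disjointness $VWh_i\cap VWh_j=\emptyset$, equivalent to $h_ih_j^{-1}\notin W^{-1}V^2W$, forces
\[
h_ih_j^{-1}\notin V^2 \quad\text{for all } i\neq j.
\]
Moreover, since $V$ is conjugation invariant under $N$, so is $V^2$.

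For the construction, I would first define an auxiliary coloring $\psi\colon N\to F$ by setting $\psi(h)$ to be the least element $f\in F$ (under a fixed well-ordering of the finite set $F$) with $h\in Vf$; this is well defined because $N\subseteq VF$, and if $\psi(h)=\psi(h')$ then $h,h'\in Vf$ for a common $f$, so $h(h')^{-1}\in VV^{-1}=V^2$. To feed this into a coloring of the Hamming graph, I would then fix any bijection $\eta\colon C_k\to\{h_1,\ldots,h_k\}$, select for each $\approx_k$-equivalence class a representative $z^*\in\prod_{n=1}^\infty C_k$, and set
\[
\Phi(z)=\prod_{n=1}^\infty \eta(z_n)\eta(z^*_n)^{-1}\;\in\;N,
\]
where the product is taken in increasing order of $n$ and $z^*$ is the chosen representative of the class of $z$. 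Since $z\approx_k z^*$, the factor equals $1$ for all but finitely many $n$, so the product is really finite.

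The key verification is that, for any Hamming edge $\{z,w\}$ differing only at coordinate $m$, the two endpoints share the same representative $z^*$, the partial products to the left of $m$ agree between $\Phi(z)$ and $\Phi(w)$, the parts to the right of $m$ cancel inside $\Phi(z)\Phi(w)^{-1}$, and telescoping leaves
\[
\Phi(z)\Phi(w)^{-1}=P\cdot\eta(z_m)\eta(w_m)^{-1}\cdot P^{-1},
\]
with $P\in N$ the common left partial product. Since $\eta$ is a bijection and $z_m\neq w_m$, the inner factor has the form $h_ih_j^{-1}$ with $i\neq j$ and lies outside $V^2$; by $N$-conjugation invariance of $V^2$, so does the full right-hand side. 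Hence $\psi(\Phi(z))\neq\psi(\Phi(w))$, and $c:=\psi\circ\Phi$ is a proper $F$-coloring of the Hamming graph, giving $\chi(k)\leqslant p$. The main technical point I expect to require care is the non-abelian telescoping in the displayed equation: it works only because of the specific ordered-product form used to define $\Phi$, combined with the crucial hypothesis that $V$, and therefore $V^2$, is $N$-invariant under conjugation.
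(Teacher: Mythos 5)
Your argument contains a genuine gap that makes it collapse precisely in the setting where the lemma is needed. In defining $\Phi$, you write ``select for each $\approx_k$-equivalence class a representative $z^*\in\prod_{n=1}^\infty C_k$.'' This is exactly the construction of a transversal for $\approx_k$, which the paper's preliminary discussion shows is (in ZF+DC) equivalent to the existence of a Vitali set. But Lemma \ref{lemma16} is invoked in the proof of Theorem \ref{quadrichotomy} under the standing hypothesis that \emph{no} Vitali set exists, and the lemma itself carries only the hypothesis ZF+DC. So the selection of $z^*$ is not available: you cannot appeal to a choice function on the uncountably many $\approx_k$-classes, and there is no continuity or compactness argument that rescues $\Phi$ (if you instead fix a single base point such as $z^*=(0,0,\ldots)$, the product becomes genuinely infinite and has no reason to converge in $N$). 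In the one case where your construction \emph{is} legitimate — a transversal exists — the lemma is already trivial: the paper's remark gives $\chi(k)=k$, and your correct opening observation that $h_ih_j^{-1}\notin V^2$ forces the $h_i$ into distinct cosets $Vf$, hence $k\leqslant p$. So the proof proves nothing beyond the trivial case.

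What the paper does instead, and what your proof is missing, is to route the construction through the homomorphism $\pi$ rather than working purely inside $N$. One builds finite sets $E_n\subseteq G$ of size $k$ and a \emph{continuous} (and in particular choiceless) map $\phi\colon\prod_{n=1}^\infty E_n\to G$ given by a fast-converging infinite product, then colours by $z\mapsto$ the unique $f\in F$ with $\pi\phi(z)\in Vf$; the colour classes are the Borel sets $(\pi\phi)^{-1}(Vf)$, defined outright with no selection. The disjointness hypothesis on the $VWh_i$ is then used to show that adjacent vertices land in distinct cosets, essentially via the same conjugation-invariance of $V$ you identified. So your algebraic reductions (that $h_ih_j^{-1}\notin V^2$, that $V^2$ is $N$-invariant under conjugation, and the ``$\psi$ separates cosets of $V$'' idea) are sound and do track the paper's strategy; the flaw is the map $\Phi$ itself, which needs to be replaced by an explicitly constructed continuous map rather than a transversal-based ``difference from a representative.''
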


\begin{proof}
Pick some set $F\subseteq N$ of cardinality $p$ so that $N\subseteq VF$ and find $h_1,\ldots, h_k\in N$ and a symmetric identity neighbourhood $W\subseteq H$ so that
\begin{equation}\label{eq:chrom}
VW^2h_i\cap VW^2h_j=\tom
\end{equation}
for all $i\neq j$. Since $N$ is compact and $VF$ is open, we can by shrinking $W$ assume that $NW\subseteq VF$. 
Because  the induced map $G\to \ov{\pi[G]}/N$ is continuous, $\pi\inv(NW)$ is an identity neighbourhood in $G$. So pick a symmetric open identity neighbourhood $U\subseteq G$ so that $\ov U\subseteq \pi\inv(NW)$.

For every identity neighbourhood $O$ in $G$, we have 
$$
h_1,\ldots, h_k\in N\subseteq \ov{\pi[O]}\subseteq W\pi[O].
$$
This means that we may inductively choose finite sets $E_1,E_2,\ldots\subseteq G$ so that, for all $n$, 
\begin{enumerate}
\item[(i)] $E_1\cdots E_n\subseteq U$,
\item[(ii)] $h_1,\ldots, h_k\in W\pi[E_n]$,
\item[(iii)] $VW\pi(g)\cap VW\pi(f)=\tom$ for all distinct $g,f\in E_n$,
\item[(iv)] the map $\phi\colon \prod_{i=1}^\infty E_i\to \ov U$ given by 
$$
\phi(g_1,g_2, g_3, \ldots)=g_1g_2g_3\ldots
$$
is well-defined, injective and continuous with respect to the product topology on  $\prod_{i=1}^\infty E_i$. 
\end{enumerate}
Observe that by (iii) and the definition of $k$ we have $|E_n|\leqslant k$ for all $n$. Conversely, by Equation \ref{eq:chrom} and (ii), we find that also $|E_n|\geqslant k$ for all $n$.

Now, suppose $\alpha,\beta\in \prod_{i=1}^\infty E_i$ differ in a single coordinate $n$. Then we can write $\phi(\alpha)=ugx$ and $\phi(\beta)=ufx$ for some $u\in U\subseteq \pi\inv(NW)$, $x\in G$ and distinct $g,f\in E_n$, whereby $\pi(\phi(\alpha))= hw\pi(g)\pi(x)$ and $\pi(\phi(\beta))= hw\pi(f)\pi(x)$ for some $h\in N$ and $w\in W$. Thus
\[\begin{split}
V\pi(\phi(\alpha))\cap V\pi(\phi(\beta))
&=V hw\pi(g)\pi(x)\cap Vhw\pi(f)\pi(x)\\
&\subseteq hV W\pi(g)\pi(x)\cap hVW\pi(f)\pi(x)\\
&=h\big[VW\pi(g)\cap VW\pi(f)\big]\pi(x)\\
&=\tom.
\end{split}\]
In particular, $\pi(\phi(\alpha))$ and $\pi(\phi(\beta))$ cannot belong to the same right-translate $Vz$ of $V$ by any $z\in H$. 
Since $\text{im}(\phi)\subseteq\ov U\subseteq \pi\inv(NW)\subseteq \pi\inv(VF)$, it thus follows that the sets
$$
A_z=(\pi\phi)\inv(Vz)
$$
for $z\in F$ cover $\prod_{i=1}^\infty E_i$ by sets so that no two distinct elements of the same $A_z$ can differ in a single coordinate. Identifying $\prod_{i=1}^\infty E_i$ with $\prod_{i=1}^\infty C_k $ in the obvious way, this defines a colouring $c\colon \prod_{i=1}^\infty C_k  \to F$, showing that $\chi(k)\leqslant p$.
\end{proof}

Having established these lemmas, we are now ready to reassemble everything into a single quadrichotomy.

\begin{thm}[$\text{ZF+DC}$]\cite{rosendal-Pi}\label{quadrichotomy} One of the following conditions hold.
\begin{enumerate}
    \item Every homomorphism between Polish groups is continuous.
    \item The chromatic number $\chi(k)$ is finite for all $k\geqslant 2$ and, if $G\overset{{\pi}}{\longrightarrow}H$ is a homomorphism between Polish groups, then $N$ is compact and connected.
    \item For infinitely many $k \geqslant 2$, we have $\chi(k)=k$ and, if $G\overset{{\pi}}{\longrightarrow}H$ is a homomorphism between Polish groups, then $N$ is compact.
    \item There is a Vitali set.
\end{enumerate}
\end{thm}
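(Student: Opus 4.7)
My plan is to treat the four cases by progressively strengthening the restrictions on the characteristic group $N$ of an arbitrary homomorphism $\pi\colon G \to H$ between Polish groups, first using the absence of Vitali sets and then finer chromatic-number conditions. If a Vitali set exists, (4) holds and there is nothing to prove, so I assume throughout that no Vitali set exists.

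First I would show that, under this assumption, $N$ is compact for every such $\pi$. Working toward condition (2) of Theorem \ref{thm:characterization N compact}, for any identity neighbourhoods $U \subseteq G$ and $V \subseteq H$ I pick a symmetric open $W \subseteq H$ with $W^2 \subseteq V$. Reducing to the case that $\pi[G]$ is dense in $H$ and using separability, the preimage $A = \pi\inv(W)$ is right $\sigma$-syndetic in $G$. Lemma \ref{lemma15} then yields a finite $E \subseteq U$ such that $E A A\inv E \subseteq E \pi\inv(V) E$ has nonempty interior. A routine group-theoretic manipulation, combining symmetry with the Pettis-type observation that $O \cdot O\inv$ is an identity neighbourhood whenever $O$ is open and nonempty, produces from this a finite $F \subseteq U$ with $F \pi\inv(V) F$ an identity neighbourhood in $G$. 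Hence $N$ is compact by Theorem \ref{thm:characterization N compact}.

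The remaining analysis splits on chromatic numbers via Lemma \ref{lemma16}. Suppose first that $\chi(k) = \infty$ for some, and hence all, $k \geqslant 2$; I claim every such $N$ is trivial, so (1) holds. If not, I pick $h \in N \setminus \{1\}$ and a symmetric open $V_0 \subseteq H$ with $V_0^4 \not\ni h$. The set $V := \bigcap_{g \in N} g V_0 g\inv$ is open by compactness of $N$ together with the joint continuity of conjugation, and it is symmetric, $N$-invariant and contained in $V_0$. Compactness of $N$ gives a finite $F$ with $N \subseteq VF$, so the parameter $p$ of Lemma \ref{lemma16} is finite, while $h_1 = 1$ and $h_2 = h$ are $VW$-separated with $W = V$ because $h \notin V^4$, so the parameter $k$ there is at least $2$. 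Lemma \ref{lemma16} then forces $\chi(2) \leqslant p < \infty$, contradicting our standing assumption.

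Otherwise $\chi(k) < \infty$ for all $k$. If also $\chi(k) = k$ for infinitely many $k$, then (3) holds since $N$ is compact. If instead $\chi(k) = k$ for only finitely many $k$, I claim each such $N$ is also connected, giving (2). If some $N$ were disconnected, then $N/N^0$ would be a nontrivial compact profinite group, admitting a continuous surjection onto a finite group of some order $m \geqslant 2$ with kernel a clopen normal subgroup $K \leqslant N$ of index $m$. Separating $K$ from the compact set $N \setminus K$ in $H$, then symmetrizing and averaging over $N$-conjugates, I obtain an open symmetric $N$-invariant $V \subseteq H$ with $V \cap N = K$; shrinking $V$ further, compactness of $N$ allows me to arrange $V^4 \cap N = K$. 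With $W = V$, coset representatives $h_1, \ldots, h_m$ of $K$ in $N$ form a $VW$-separated family of size $m$, while $N \subseteq V \{h_1, \ldots, h_m\}$ shows $p = m$. Lemma \ref{lemma16} then yields $\chi(m) \leqslant m$, whence $\chi(m) = m$, and iterating via $\chi(m^n) \leqslant \chi(m)^n$ gives $\chi(m^n) = m^n$ for all $n$, contradicting that $\chi(k) = k$ only finitely often. The main obstacle I anticipate is the delicate shrinking in this last argument: producing an open symmetric $N$-invariant $V \subseteq H$ whose fourth power meets $N$ in exactly the clopen subgroup $K$. This is where the compactness of $N$ and the clopenness of $K$ in $N$ are used essentially, and most of the technical work lies here.
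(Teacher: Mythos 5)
Your proposal follows essentially the same route as the paper: assume no Vitali set, derive compactness of $N$ via Lemma \ref{lemma15} and Theorem \ref{thm:characterization N compact}, then run Lemma \ref{lemma16} twice---once with a clopen proper normal subgroup of finite index (the paper cites van Dantzig; you reconstruct it from profiniteness of $N/N^0$) and once with just a separated pair in $N$---to split the remaining cases. The logical bookkeeping is slightly different (you condition directly on $\chi$, the paper proves the contrapositives), but the underlying arguments are identical. Two spots deserve more care. First, you say a ``routine group-theoretic manipulation'' upgrades the nonempty-interior conclusion of Lemma \ref{lemma15} to the identity-neighbourhood hypothesis required in Theorem \ref{thm:characterization N compact}; as written this is not routine, since $OO^{-1}$ for $O\subseteq E\pi^{-1}(W^2)E$ open does not land in a set of the form $F\pi^{-1}(V)F$. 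The cleaner observation is that the proof of Lemma \ref{lemma15} (following the construction in Theorem \ref{jpr}) actually yields the stronger conclusion that $EAA^{-1}E$ is an identity neighbourhood: the recursion only needs, at each stage, that the finite union of translates of $AA^{-1}$ fails to be an identity neighbourhood, and this is what the paper invokes. Second, your abstract requirement ``$V^4\cap N=K$ for some symmetric open $N$-invariant $V\supseteq K$'' is not achieved merely by ``shrinking $V$,'' since $K\subseteq V$ is a fixed constraint; the correct realisation is the paper's $V=KW$ with $W$ symmetric, open, $N$-invariant and small enough that $W^4\cap N\subseteq K$, whence $V^4=KW^4$ (using $KW=WK$) and $V^4\cap N=K$. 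You rightly flag this as the delicate step, and with this instantiation the remaining verification that the coset representatives are $VW$-separated and that $p=[N:K]$ goes through just as in the paper.
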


\begin{proof}
In all of the proof, we suppose that (4) fails, that is, there is no Vitali set. Then by Lemma \ref{lemma15}, for every right $\sigma$-syndetic set $A \subseteq G$ and identity neighbourhood $U \subseteq G$, there is a finite set $E \subseteq U$ so that $EAA^{-1}E$ is an identity neighbourhood in $G$. 

Suppose $G\overset{{\pi}}{\longrightarrow}H$ is a homomorphism between Polish groups and so that $\pi[G]$ is dense in $H$ and let $U \subseteq G$ and $V \subseteq H$ be identity neighbourhoods. Let $W$ be a symmetric open identity neighbourhood such that $W^2 \subseteq V$. Then $A = \pi^{-1}(W)$ is right $\sigma$-syndetic in $G$ and so, for some finite $E \subseteq U$,
$$
E \pi^{-1}(W) \pi^{-1}(W)^{-1}E \subseteq E \pi^{-1}(V)E
$$ 
are identity neighbourhoods in $G$. By Theorem \ref{thm:characterization N compact}, we conclude that $N$ is compact.

Now, assume there is some $G\overset{{\pi}}{\longrightarrow}H$ so that $N$ is compact, but not connected. Then by a theorem of D. van Dantzig \cite{Dantzig} $N$  has a clopen proper normal subgroup $M\trianglelefteq N$. Because $N$ is  compact, the index $k=[N:M]>1$ is finite. Letting $h_1, \ldots, h_k\in N$ be coset representatives for $M$ in $N$, we thus see that
$$
Mh_1, \ldots, Mh_k
$$
is a partition of $N$ into compact sets. We may thus choose a symmetric open identity neighbourhood $W \subseteq H$ such that $W^3Mh_1, \dots, W^3 Mh_k$ are also pairwise disjoint. Moreover, as $N$ is compact, we may suppose that $hWh^{-1} = W$ for all $h \in N$. Then $V=MW$ is symmetric and open in $H$ and so
$$
VW^2h_i\cap VW^2h_j=MW^3h_i\cap MW^3h_j = W^3Mh_i\cap W^3Mh_j=\tom
$$ 
for all $i\neq j$. By Lemma \ref{lemma16}, we find that $\chi(k)=k$ and thus $\chi(k^m)=k^m$ for all $m \geqslant 1$.

Now, assume instead there is a discontinuous homomorphism $G\overset{{\pi}}{\longrightarrow}H$ between Polish groups. Then $N$ is compact but $N\neq \{1\}$. We may thus find some symmetric open identity neighbourhood $V \subseteq H$ so that $hVh^{-1}=V$ for all $h \in N$ and so that $V^3h_1 \cap V^3h_2 = \emptyset$ for some elements $h_1, h_2 \in N$. Letting $W=V$ and  $F \subseteq N$ be any finite set so that $N \subseteq VF$, it follows from Lemma \ref{lemma16}  that $\chi(2) \leqslant |F|$ and hence that $\chi(k)<\infty$ for all $k \geqslant 2$.
\end{proof}

It is worth pausing to consider the ramifications of Theorem \ref{quadrichotomy}. The proper reading of the theorem is of course that every set theoretical universe verifying ZF+DC must satisfy one of the four alternatives. So let us begin with some results indicating the necessity of these. First of all, in Solovay's model, every homomorphism between Polish groups is continuous and $\chi(2)$ is uncountable, which means that it only satisfies Theorem \ref{quadrichotomy}(1). Similarly, if the axiom of choice holds, then there is a discontinuous homomorphism $\R\to \R$ and a Vitali set. Since $\R$ has no non-trivial compact subgroups, this means that we are in case (4) of Theorem \ref{quadrichotomy} and no other. For further cases, we need the following results from the recent book \cite{larson2} by P. B.  Larson and J. Zapletal that contains a wealth of further information about models of ZF+DC. 
\begin{thm}[$\text{ZF+DC}$]\label{thm:zapletal} Relative to the existence of an inaccessible cardinal, the following options are separately consistent.
\begin{enumerate}
\item There is a discontinuous homomorphism $\R\to \R/\Z$, but no Vitali set \cite[Corollary 9.2.21]{larson2}.
\item There is a non-principal ultrafilter on $\N$, but no Vitali set \cite{diprisco}\;\&\;\cite[Corollary 9.2.5]{larson2}.
\item There is a Vitali set, but no discontinuous homomorphism between Polish groups, \cite[Corollary 13.3.10]{larson2}.
\end{enumerate}
\end{thm}
Thus, Theorem \ref{thm:zapletal}(3) implies that it is possible to realise all cases of Theorem \ref{quadrichotomy} simultaneously. On the other hand, Theorem \ref{thm:zapletal}(2) is only compatible with Theorem \ref{quadrichotomy}(3), since a non-principal ultrafilter induces a discontinuous homomorphism $\prod_{n=1}^\infty C_2\to C_2$ and hence with disconnected $N$. And lastly, Theorem \ref{thm:zapletal}(1) is a priori compatible with both (2) and (3) of Theorem \ref{quadrichotomy}, but not with (1) and (4).

A second aspect to consider is what more detailed information can be obtained about each case of Theorem \ref{quadrichotomy}. For example, when restricting to specific target groups $H$, one can exclude certain of the different options and thus get a stronger statement. The simplest instance of this is  when $H$ is the additive topological group $(X,+)$ of some separable Banach space $(X,\norm\cdot)$. Then the only compact subgroup of $H$ is just the trivial group $\{0\}$, which means that, if $G\maps\pi H$ is a group homomorphism with compact characteristic group $N$, then $N=\{0\}$ and so $\pi$ is continuous. This fact was previously established by Larson and Zapletal \cite{larson}.

\begin{cor}[$\text{ZF+DC}$]\cite{larson} \label{cor1:larson} Suppose there is no Vitali set. Then every homomorphism $G\maps \pi H$ from a Polish group $G$ to the additive group $H$ of a separable Banach space is continuous.
\end{cor}
In particular, if there is no Vitali set, then every linear operator between separable Banach spaces is continuous.  Observe also that, whereas a discontinuous homomorphism $\R\to \R$ is incompatible with cases (1), (2) and (3) of Theorem  \ref{quadrichotomy}, on the face of it, a discontinuous homomorphism $\R\to \R/\Z$ is only incompatible with case (1).

There are other consequences of the axiom of choice that are more directly associated with discontinuous homomorphisms. For example, suppose $B\subseteq \R$ is a Hamel basis for $\R$ over $\Q$ and assume without loss of generality that $1\in B$. Then the functional $\R\maps\phi \Q$ that to $x\in \R$ associates the coefficient of $1$ in the expansion of $x$ is a discontinuous homomorphism and the kernel $T={\sf ker}\,\phi$ is a Vitali set.
 It is thus natural to attempt to separate the existence of a Hamel basis from the existence of a Vitali set via a condition on the size of the characteristic group $N$ along the lines of Theorem \ref{quadrichotomy}.
\begin{prob}[ZF+DC]\label{prob:hamel}
Is it possible to further separate case (4) of Theorem \ref{quadrichotomy} into two options, one stipulating the existence of a Hamel basis for $\R$ over $\Q$ and the other stipulating a non-trivial condition to be satisfied by the characteristic group $N_\pi$ associated with any Polish group homomorphism $G\maps\pi H$?
\end{prob}


\subsection{Small index properties} 
A shall soon be evident, the problem of continuity of homomorphisms is closely tied to other more explicitly algebraic questions for which it will be useful to introduce some specific terminology.
\begin{defi}\label{def:automcont}
Let $G$ be a Polish group. We say that $G$ has the
\begin{enumerate}
\item {\em automatic continuity property} if every homomorphism $G \maps\pi H$ into any Polish group $H$ is continuous,
\item {\em countable index property} if every subgroup $F\leqslant G$ of countable index is open,
\item  {\em normal countable index property} if every normal subgroup $F\leqslant G$ of countable index is open,
\item  {\em finite index property} if every subgroup $F\leqslant G$ of finite index is open.
 \end{enumerate}
 \end{defi}
It is trivial that the countable index property implies the normal countable index property. Also, by Poincaré's lemma, every finite index subgroup $F$ of a (topological) group $G$ contains a finite index normal subgroup $K$ of $G$. Thus, if $K$ is open in $G$, so is $F$. Therefore, the normal countable index property, in turn, implies the finite index property.

Evidently, the finite index property for a group $G$ is equivalent to requiring all homomorphisms to finite discrete groups to be continuous and, similarly, the normal countable index property is equivalent to requiring all homomorphisms to countable discrete groups are continuous. 

On the other hand, the countable index property has two interesting reformulations. Namely, suppose $X$ is a countable discrete set and equip the group $S={\sf Sym}(X)$ of all permutations of $X$ with the {\em permutation group topology} obtained by declaring the isotropy subgroups
$$
S_x=\mgd{f\in S}{fx=x}
$$
to be open. This is the same as the topology of pointwise convergence on the discrete set $X$ and, in this topology, ${\sf Sym}(X)$ becomes a totally disconnected Polish group.
We may then observe that the following three conditions are equivalent for a Polish group $G$.
\begin{enumerate}
\item[(i)] $G$ has the countable index property,
\item[(ii)] every action $G\curvearrowright X$ on a countable discrete set $X$ is continuous, that is, the action map $G\times X\to X$ is continuous,
\item[(iii)] for every countable discrete set $X$, every homomorphism $G\maps\pi {\sf Sym}(X)$ is continuous.
\end{enumerate}
Indeed, continuity of an action $G\curvearrowright X$ simply means that the (countable index) isotropy subgroups 
$$
G_x=\mgd{g\in G}{gx=x}
$$ 
are open in $G$, which shows (i)$\saa$(ii). Also, a homomorphism $G\maps\pi {\sf Sym}(X)$ is continuous if the associated action $G\acts{}X$ is continuous, which establishes (ii)$\saa$(iii). And finally, a  countable index subgroup $F\leqslant G$ is open if  the homomorphism $G\to{\sf Sym}(G/F)$ induced by the left-multiplication action of $G$ on the discrete set $G/F$ is continuous, whence (iii)$\saa$(i).

Observe that because of the equivalence (i)$\equi$(iii) we also see that the automatic continuity property implies the countable index property. We therefore have the following implications among the four properties of Definition \ref{def:automcont}.
\maths{
\text{Automatic continuity property} 
&\quad \saa \quad \text{Countable index property}\\
&\quad \saa \quad \text{Normal countable index property}\\
&\quad \saa \quad \text{Finite index property}.
}

The most prominent example of the finite index property is the result of N. Nikolov and D. Segal \cite{nikolov} stating that every every topologically finitely generated profinite group has the finite index property (or, in the language of profinite groups, is {\em strongly complete}). Also, J. Saxl and J. Wilson \cite{saxl} have shown that, if $(S_n)_{n=1}^\infty$ is a sequence of non-abelian finite simple groups so that $\lim_n|S_n|=\infty$, then the direct product $\prod_{n}S_n$ has the finite index property. On the other hand, S. Thomas \cite{thomas} shows that these products $\prod_{n}S_n$ need not satisfy the countable index property, whence the finite and countable index properties do not agree in the context of Polish profinite groups.

We do not know whether there is any separation between the automatic continuity and the countable index properties for Polish groups. Although this may simply be due to ignorance on our part, it appears that all constructions of discontinuous homomorphisms in fact also give rise to discontinuous actions on countable sets.
\begin{conj}\label{conj:sip-autom}
Every Polish group with the countable index property also has the automatic continuity property.
\end{conj}

As will be discussed further in Section \ref{section:isometry}, there is a considerable connection between the size of conjugacy classes in a Polish groups and automatic continuity phenomena. For an easy instance, suppose $G$ is Polish group with a comeagre conjugacy class $C\subseteq G$. Then, if $F$ is a countable index normal subgroup of $G$, as $F$ covers $G$ by countably many translates, it cannot be meagre in $G$ and must therefore intersect the conjugacy class $C$. By normality, this means that actually $C\subseteq F$, whence $F$ is a comeagre subgroup and therefore cannot have any disjoint cosets. That is, $F=G$. This shows that $G$ has the normal countable index property. We expect more to be true.
\begin{conj}\label{conj:comeagre-sip}
Every Polish group with a comeagre conjugacy class has the countable index property.
\end{conj}
Taken together, Conjectures \ref{conj:sip-autom} and \ref{conj:comeagre-sip} would thus imply that every Polish group $G$ with a comeagre conjugacy class has the automatic continuity property. If the hypothesis on $G$ is strengthened to having ample generics, which will be defined in Section \ref{section:isometry}, this is indeed the case \cite{kec-ros}. 

Just as Theorem \ref{quadrichotomy} was used to prove Corollary \ref{cor1:larson}, we may investigate the relationship between the  continuity and index properties above under the different options of Theorem \ref{quadrichotomy}. Evidently, if every homomorphism between Polish groups is continuous (which is Theorem \ref{quadrichotomy}(1)), then all Polish groups have the automatic contuity property. Similarly, suppose that Theorem \ref{quadrichotomy}(2) holds and $G\maps\pi {\sf Sym}(X)$ is a homomorphism from a  Polish group $G$ into the group of permutations of a countable discrete set $X$, then the associated characteristic group $N$ is a compact connected subgroup of the totally disconnected group ${\sf Sym}(X)$, which means that $N=\{1\}$ and hence $\pi$ is continuous. We thus have the following result.

\begin{cor}[$\text{ZF+DC}$] Either every Polish group has the countable index property or $\chi(k)=k$ for 
infinitely many $k\geqslant 2$. 
\end{cor}

The more interesting conclusions  however are obtained for Theorem \ref{quadrichotomy}(3).

\begin{thm}[$\text{ZF+DC}$]\label{thm1:noVitali}
Suppose there is no Vitali set. Then, for every countable index subgroup $F$ of a Polish group $G$, there is an open subgroup $G_0\leqslant G$ containing $F$ as a finite index subgroup. If furthermore $F$ is normal in $G$, then so is $G_0$.
\end{thm}

\begin{proof}
Suppose there is no Vitali set and that $G\acts{} X$ is an action of a Polish group on a countable discrete set. Let $G\maps\pi{\sf Sym}(X)$ be the corresponding homomorphism, $N$ the associated characteristic group, and set $H=\ov{\pi[G]}$. Because $N$ is normal in $H$, we have that
$$
hNx=Nhx
$$
for all $h\in H$ and $x\in X$. It follows that the quotient group $H/N$ permutes the $N$-orbits on $X$, which induces a continuous action $H/N\acts{}{\sf Sym}(X/N)$. Because the induced homomorphism $G\maps{\tilde \pi} H/N$ is continuous, it follows that the induced action $G\acts{}X/N$ is also continuous. Therefore, for every $x\in X$,  the stabiliser 
$$
G_{Nx}=\mgd{g\in G}{gNx=Nx}
$$
is open in $G$. Also, as $N$ is compact, $Nx$ is a finite set, whereby the isotropy group $G_x$ has finite index in $G_{Nx}$.

If now $F$ is a countable index subgroup of a Polish group $G$, we may set $X=G/F$ and find that $F=G_{1F}$ has finite index in some open subgroup $G_0\leqslant G$. Now, if $F$ is furthermore normal in $G$, then 
$$
F\leqslant \bigcap_{g\in G}gG_0g\inv \normal G.
$$
As $\bigcap_{g\in G}gG_0g\inv $ is the intersection of open and thus clopen subgroups, it is itself closed and, having countable index in $G$, it is open too. Replacing $G_0$ with $\bigcap_{g\in G}gG_0g\inv $, the last statement of the theorem follows.
\end{proof}

It is remarkable that the conclusion of Theorem \ref{thm1:noVitali} closely follows the main results obtained by O. Bogopolski and S.M. Corson \cite{bogopolski}, albeit for specific countable discrete target groups and no assumption regarding the inexistence of Vitali sets. For example, \cite[Theorem C]{bogopolski} states that, if $G\maps\pi {\sf MCG}(\Sigma)$ is a homomorphism from a Polish group to the (countable discrete) mapping class group of a compact connected surface $\Sigma$, then there is an open normal subgroup $G_0\leqslant G$ in which ${\sf ker}\,\pi$ has finite index. 

\begin{cor}[$\text{ZF+DC}$]\label{cor1:noVitali}
Suppose there is no Vitali set. Then every countable index subgroup $F$ of a compact or a connected Polish group $G$ must have finite index.
\end{cor}
For example, Thomas' \cite{thomas} examples of profinite Polish groups with the finite index, but not countable index property require at least a Vitali set for their proofs.

Finally, we are have a conditional positive answer to Conjecture \ref{conj:comeagre-sip}.
\begin{thm}[$\text{ZF+DC}$]\label{thm2:noVitali} Suppose there is no Vitali set. Then every Polish group with a comeagre conjugacy class has  the automatic continuity property.
\end{thm}

\begin{proof}
We shall show the stronger property that, if $G$ is a Polish group so that the union $C$ of non-meagre conjugacy classes is dense in $G$, then $G$ has the automatic continuity property. To see this, suppose $G\maps\pi H$ is a homomorphism
to a Polish group and let $N$ be the associated characteristic subgroup. Assuming, as we do, that there is no Vitali set, $N$ is compact. 

Let $W\subseteq H$ be a given identity neighbourhood. Because $N$ is compact, we may find a smaller  identity neighbourhood $V$ so that
$$
 V=V\inv=nVn\inv\subseteq V^{6}\subseteq W
 $$
 for all $n\in N$. Set also $U=\pi\inv(VN)$, which is open in $G$. Then $\pi\inv(V)$ must be non-meagre in $G$ and therefore intersect the comeagre set $C$. So fix some $a\in \pi\inv(V)\cap C$. Let also $D\subseteq G$ be a countable dense subset and, for $X\subseteq G$, set $a^X=\mgd{xax\inv}{x\in X}$. We then see that
$$
a^G=a^{DU}=\bigcup_{g\in D}a^{gU}=\bigcup_{g\in D}ga^{U}g\inv
$$
and so, as the conjugacy class $a^G$ is non-meagre, so is the set $a^U$. In particular, by Pettis' lemma and the fact that $a^U$ is analytic and therefore has the Baire property, 
$$
O=a^U\cdot \big(a^U\big)\inv
$$
is an identity neighbourhood in $G$. 

Suppose $g\in O$. Then we may find $u_1,u_2\in U$ so that $g=u_1au_1\inv u_2a\inv u_2\inv$. Furthermore, as $\pi(u_i)\in VN$, there are $n_1,n_2\in N$ so that $\pi(u_i)\in Vn_i$. It thus follows that
\maths{
\pi(g)
&=\pi(u_1)\pi(a)\pi(u_1)\inv \pi(u_2)\pi(a)\inv \pi(u_2)\inv\\
&\in Vn_1\cdot V\cdot n_1\inv V\cdot  Vn_2\cdot V \cdot  n_2\inv V\\
&=V^6\\
&\subseteq W.
}
In other words, $\pi\inv(W)$ is an identity neighbourhood in $G$, showing continuity of $\pi$.
\end{proof}

\begin{exa}[ZF+DC]Although we do not have a general method of constructing non-principal ultrafilters on $\N$ from a discontinuous homomorphism, Thomas and Zapletal \cite[Theorem 4.6]{thomaszapletal} are able to do so in special cases. The following is a slight variation on their methods. 

Fix a sequence $(H_n)_{n=1}^\infty$ of non-trivial countable discrete perfect groups with uniformly bounded commutator width. That is, there is some $r\geqslant 1$ such that, for every $n$ and every $h\in H_n$, one can write $h$ as a product of $r$ commutators, 
$$
h=[g_1,f_1]\cdots [g_r,f_r]
$$
with $g_i,f_i\in H_n$. Then one of the following conditions holds.
\begin{enumerate}
    \item $\prod_{n=1}^\infty H_n$ has the normal countable index property,
    \item there is a non-principal ultrafilter $\ku U$ on $\N$.
\end{enumerate}

To see this, assume that  $G$ is a non-open countable index proper normal subgroup of $\prod_{n=1}^\infty H_n$. Then $\ov G$ is a closed countable index subgroup and thus must be open in $\prod_{n=1}^\infty H_n$. It follows that $\prod_{n\in C}H_n\leqslant \ov G$
for some cofinite subset $C\subseteq \N$ and hence $F=G\cap \prod_{n\in C}H_n$ is a non-open dense countable index normal subgroup of $\prod_{n\in C}H_n$.

We say that a coset $tF$  is {\em full} for a subset $A\subseteq C$ provided that, for every $g\in \prod_{n\in A} H_n$, there is some element of $\hat g\in tF$ so that $g(n)=\hat g(n)$ for all $n\in A$. Observe that, in this case, also $F$ is full for $A$. Moreover, suppose $h\in \prod_{n\in A} H_n$ and write
$$
h=[g_1,f_1]\cdots [g_r,f_r]
$$
for elements $g_i,f_i\in\prod_{n\in A} H_n$. Find $\hat g_i\in F$ as above, whence $g_if_ig_i\inv=\hat g_if_i\hat g_i \inv$. Because $F$ is normal in $\prod_{n\in C}H_n$, it then follows that
$$
h
=g_1f_1g_1\inv f_1\inv  \cdots  g_rf_rg_r\inv f_r\inv 
=\hat g_1 f_1\hat g_1\inv  f_1\inv \cdots \hat g_r f_r\hat g_r\inv  f_r\inv \in F.
$$
In other words, a coset $tF$ is full for $A$ if and only if $\prod_{n\in A} H_n\leqslant F$.

We now set
$$
\ku I=\MGD{A\subseteq C}{\prod_{n\in A} H_n\leqslant F}=\Mgd{A\subseteq C}{\text{ some coset of $F$ is full for $A$ } }
$$
and note that $\ku I$ is a proper ideal on $C$. Observe also that, because $F$ is dense in $\prod_{n\in C}H_n$, it is full for every finite subset $A\subseteq C$. So $\ku I$ contains the Fréchet ideal of all finite subsets of $C$.

We claim that there is some set 
$A\notin \ku I$ such that the trace
$$
\ku I\cap \ku P(A)
$$
is a  prime ideal on $A$. If this fails, then for every $A\notin \ku I$ there is some $A'\subseteq A$ so that neither $A'$ nor $A\setminus A'$ belongs to $\ku I$. Using this, we construct a partition
$$
C=\bigsqcup_{m=1}^\infty A_m
$$
with $A_m\notin \ku I$. Fix also a countable set $\{t_m\}_{m=1}^\infty$ of left-coset representatives for $F$ in $\prod_{n\in C} H_n$. Because  $t_mF$ is not full for $A_m$, there is $g_m\in \prod_{n\in A_m} H_n$ that is not the projection of any element of $t_mF$ and so the product $\prod_{m=1}^\infty g_m$ must lie in $\prod_{n\in C} H_n\setminus \bigcup_{m=1}^\infty t_mF$, which is absurd.

So fix  $A\notin \ku I$ so that $\ku I\cap \ku P(A)$
is a  prime ideal on $A$. Since it contains all finite sets of $A$, the complementary ultrafilter on $A$ is non-principal. Since $A$ must be countably infinite, we can transfer this to a non-principal ultrafilter on $\N$.
\end{exa}

\begin{prob}[ZF+DC]\label{prob:ultra} Is there a dichotomy between, on the one hand,  the existence of a non-principal ultrafilter on $\N$ and, on the other hand,  a non-trivial condition to be satisfied by the characteristic group $N_\pi$ associated with any Polish group homomorphism $G\maps\pi H$?
\end{prob}


\section{Automatic continuity in compact manifolds}\label{sec:homeo}
Henceforth, we shall discard all measurability assumptions and instead consider the problem of finding Polish groups $G$ with the automatic continuity property.
Our main tool for this is the following formally stronger and more  explicit combinatorial property. 

\begin{defi}
A Polish group is said to be \emph{Steinhaus} if there is an exponent $k \geqslant 1$ such that whenever $W \subseteq G$ is symmetric right $\sigma$-syndetic subset containing the identity, then
$$
W^k
$$
is an identity neighbourhood. 
\end{defi}

\begin{lemme}\cite{ros-sol} If $G$ is a Steinhaus Polish group, then any homomorphism $G \maps\pi H$ into a separable group is continuous.
\end{lemme}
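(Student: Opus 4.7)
The plan is to establish continuity of $\pi$ at the identity by showing that the pre-image of every identity neighborhood of $H$ is an identity neighborhood in $G$, using the Steinhaus exponent $k$ and the separability of $H$ to manufacture a symmetric right $\sigma$-syndetic set sitting inside $\pi^{-1}(V)$ after $k$-fold multiplication.

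Let $V \subseteq H$ be an arbitrary identity neighborhood. First I would pick a symmetric open identity neighborhood $V_0 \subseteq H$ whose power $V_0^{2k}$ still lies inside $V$; this is standard, using joint continuity of multiplication and inversion. I then set $A = \pi^{-1}(V_0^2)$. Because $V_0^2$ is open, contains $1_H$, and is symmetric, the set $A$ is symmetric and contains $1_G$. What remains is to verify the right $\sigma$-syndetic condition and apply Steinhaus.

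For the syndetic property, I would use the separability of $H$ to cover $H = \bigcup_n V_0 h_n$ for a countable family $\{h_n\} \subseteq H$. For each $n$ with $\pi^{-1}(V_0 h_n)$ nonempty, pick $f_n$ in this pre-image; then $\pi(f_n) = v_n h_n$ for some $v_n \in V_0$, so $h_n \in V_0 \pi(f_n)$ and hence $V_0 h_n \subseteq V_0^2 \pi(f_n)$. This yields
\[
G \;=\; \pi^{-1}(H) \;\subseteq\; \bigcup_n \pi^{-1}\bigl(V_0^2 \pi(f_n)\bigr) \;=\; \bigcup_n A\, f_n,
\]
showing that $A$ is right $\sigma$-syndetic. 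Applying the Steinhaus hypothesis with exponent $k$ to $A$, we conclude that $A^k$ is an identity neighborhood in $G$. But
\[
A^k \;=\; \pi^{-1}(V_0^2)^k \;\subseteq\; \pi^{-1}(V_0^{2k}) \;\subseteq\; \pi^{-1}(V),
\]
so $\pi^{-1}(V)$ contains an identity neighborhood and is therefore itself one. As $V$ was arbitrary, $\pi$ is continuous at $1_G$, hence continuous as a group homomorphism.

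There is no real obstacle here: the argument is a direct translation of the Steinhaus condition into continuity. The only bookkeeping point requiring care is choosing the exponent of $V_0$ large enough (namely $2k$) so that after forming $A = \pi^{-1}(V_0^2)$ and then the $k$-fold product $A^k$, we still end up inside $\pi^{-1}(V)$; everything else is routine.
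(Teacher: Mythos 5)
Your proof is correct and is exactly the standard argument one would expect here (and the one given in the cited source \cite{ros-sol}): choose $V_0$ small enough that $V_0^{2k}\subseteq V$, set $A=\pi^{-1}(V_0^2)$, verify that $A$ is a symmetric right $\sigma$-syndetic set containing $1$ by pulling back a countable cover $H=\bigcup_n V_0 h_n$ via representatives $f_n\in\pi^{-1}(V_0h_n)$, and then apply the Steinhaus exponent. The paper itself states this lemma with only a citation and no proof, so there is nothing in the paper to diverge from; the one point worth flagging in your write-up is that it is really only the Steinhaus property of $G$ that is used, while Polishness of $G$ enters only through the definition of Steinhaus and plays no further role in the argument.
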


Our goal is now to shown that homeomorphism groups of compact manifolds are Steinhaus. For this, let $M$ be a compact manifold of dimension $n$ and consider  the group ${\sf Homeo}(M)$ of all homeomorphisms of $M$. Because $M$ is compact metrisable, one may fix a compatible metric $d$ on $M$ and equip 
${\sf Homeo}(M)$ with the corresponding uniform metric,
$$
d_\infty(g,h)=\sup_{x\in M}d\big(g(x), h(x)\big).
$$
We note that $d_\infty$ is a right-invariant metric inducing a Polish group topology on ${\sf Homeo}(M)$.
This topology agrees both with the compact-open topology  and with the topology of uniform convergence on $M$. In particular, it is independent of the specific choice of the metric $d$ on $M$. 

Our proof relies on the foundational work on topological manifolds due to R.D. Edwards and R.C. Kirby \cite{edwards} encapsulated in the followed innocuous looking but immensely powerful result.

\begin{thm}\cite{edwards}\label{thm:Edwards Kirby} Let  $\{U_1, \ldots, U_m\}$ be an open cover of a compact manifold $M$. Then the set of products
$$
\{g_1g_2 \cdots g_m \del {\sf supp}(g_i) \subseteq U_i\}
$$
is an identity neighborhood in  ${\sf Homeo}(M)$.
\end{thm}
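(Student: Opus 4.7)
The plan is to reduce the statement to the Edwards--Kirby local contractibility theorem together with an iterative fragmentation argument. Local contractibility provides, for each compact pair $K \subseteq U \subseteq M$, an identity neighbourhood $W$ in ${\sf Homeo}(M)$ and a continuous ``capture'' map $\Psi \colon W \to {\sf Homeo}(M)$ fixing ${\rm id}_M$, such that $\Psi(h)$ agrees with $h$ on $K$ and is supported inside $U$. Intuitively, any sufficiently small homeomorphism can be imitated on any given compact set by one that only stirs things around in a prescribed open neighbourhood.

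Granted this capture property, the theorem would follow by iterating fragmentation across the cover. Refine $\{U_1,\ldots,U_m\}$ to compact $K_i \subseteq U_i$ whose interiors still cover $M$. For $h$ sufficiently close to ${\rm id}_M$, define $g_1 := \Psi_1(h)$, where $\Psi_1$ is the capture map associated to $K_1 \subseteq U_1$; then ${\sf supp}(g_1) \subseteq U_1$ and $g_1^{-1} h$ equals the identity on $K_1$, so its support lies in $M \setminus {\rm int}(K_1) \subseteq U_2 \cup \cdots \cup U_m$. Continuity of $\Psi_1$ at ${\rm id}_M$ keeps $g_1^{-1} h$ small, so a routine induction (or equivalently, a recursive application of the capture maps $\Psi_i$ for the pairs $K_i \subseteq U_i$) yields $g_1^{-1} h = g_2 g_3 \cdots g_m$ with ${\sf supp}(g_i) \subseteq U_i$, producing the desired factorisation $h = g_1 g_2 \cdots g_m$. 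The only point requiring care is that the induction is really a statement about fragmenting a homeomorphism whose support is already contained in a subcover, which is easily derived from the principal statement by enlarging the cover with a set disjoint from the support.

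The genuine obstacle, and the reason the theorem is cited rather than proved in the present paper, is the local contractibility itself. Its proof, due to \v Cernavski\u\i{} and Edwards--Kirby, relies on Kirby's celebrated torus trick: one locally immerses a chart of $M$ into the torus $T^n$, lifts the problem to $T^n$ where PL and smooth machinery becomes available, straightens the homeomorphism via handle-straightening and (in dimension four) the annulus theorem, and finally descends back to $M$. This is where the deep work lies; once granted, the iterative fragmentation above is elementary bookkeeping with shrinkings of the cover and continuity of the capture maps.
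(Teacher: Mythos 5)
The paper leaves this theorem unproved, citing Edwards--Kirby, so there is no house proof to compare against; your strategy of deriving fragmentation from local contractibility by an iterative capture argument, while deferring the torus-trick machinery to Edwards--Kirby, is exactly the standard route. The gap is in the bookkeeping. With the capture map as you state it (agrees with the input on $K$, supported in $U$, continuous, fixing ${\rm id}_M$), the iteration does not close. After the first step, $r_1 = g_1^{-1}h$ is the identity on $K_1$ and indeed ${\sf supp}(r_1) \subseteq M \setminus {\rm int}(K_1) \subseteq U_2 \cup \cdots \cup U_m$; so far so good. But when you apply $\Psi_2$ to $r_1$, the output $g_2 = \Psi_2(r_1)$ is only controlled on $K_2$ and outside $U_2$. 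On $U_2 \cap K_1$, where $r_1$ is already the identity, $g_2$ is free to act nontrivially, so $r_2 = g_2^{-1}r_1$ need no longer be the identity on $K_1$, and ${\sf supp}(r_2)$ can re-enter the part of $U_1$ not covered by $U_3,\dots,U_m$. The invariant you want, ${\sf supp}(r_j) \subseteq U_{j+1}\cup\cdots\cup U_m$, is therefore not maintained; and your fallback of enlarging the cover with a set disjoint from the support returns you to an $m$-set cover, so the induction is circular rather than descending.

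What repairs this is the \emph{relative} form of the Edwards--Kirby deformation theorem, which is itself part of their paper: for a compact pair $K \subseteq U$ and a prescribed closed set $D$, there is a continuous capture map $\Psi$ on a small enough identity neighbourhood such that $\Psi(h)$ agrees with $h$ on $K$, is supported in $U$, and in addition $\Psi(h) = {\rm id}$ on $D$ whenever $h = {\rm id}$ on $D$. Taking $D = K_1 \cup \cdots \cup K_{j-1}$ at stage $j$ preserves the invariant that $r_j$ is the identity on $K_1 \cup \cdots \cup K_j$. Since the ${\rm int}(K_i)$ cover $M$, this forces $r_m = {\rm id}$ and yields the factorisation $h = g_1 \cdots g_m$ with ${\sf supp}(g_i) \subseteq U_i$. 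Without the relative clause, the local contractibility statement you quote does not by itself imply the fragmentation; your sketch should either invoke the relative version explicitly or reorganise the induction along the handle decomposition that Edwards and Kirby use for exactly this purpose.
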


In addition to this, we need some basic facts about homeomorphisms and compact manifolds. Also, to keep matters simple, we exclusively cover the case of closed manifolds, i.e., compact manifolds without boundary. The details for the boundary case can be found in \cite{mann}.

\begin{lemme}\label{lemma:comm}
Suppose $g \in {\sf Homeo}(\R^n)$ is compactly supported. Then there are compactly supported $f,h \in {\sf Homeo}(\R^n)$ so that 
$$
g = f\inv h\inv fh.
$$
\end{lemme}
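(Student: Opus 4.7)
The plan is to express $g$ as a commutator via a Mather-style infinite product trick. Fix an open ball $B \subseteq \R^n$ whose interior contains ${\sf supp}(g)$. The goal is to construct a compactly supported homeomorphism $k$ of $\R^n$, fixing some point $p$, so that the iterates
\[
B,\; k(B),\; k^2(B),\; \ldots
\]
are pairwise disjoint and accumulate at $p$. Given such a $k$, I would set
\[
F := \prod_{j=0}^\infty k^j g k^{-j},
\]
and take $f := F^{-1}$ together with $h := k^{-1}$. The identity $g = f^{-1} h^{-1} f h$ will follow from a short calculation.

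For the construction of $k$, choose a bounded open set $U \subseteq \R^n$ and a point $p \in U$, and pack pairwise disjoint closed balls $B_0 = B, B_1, B_2, \ldots \subseteq U$ whose diameters shrink to $0$ as $B_j \to p$. Define $k$ so that $k$ maps each $B_j$ onto $B_{j+1}$ via a fixed similarity, $k(p) = p$, and $k$ is the identity outside $U$. For $n \geqslant 2$ the extension of this partial shift to a homeomorphism of $U$ that is the identity near $\partial U$ is routine, using that $U \setminus \bigl(\bigcup_j B_j \cup \{p\}\bigr)$ is open and connected. For $n = 1$, the same idea works by aligning the $B_j$ monotonically along a half-line tending to $p$ and defining $k$ piecewise linearly. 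This step is the only place requiring concrete work, and is the main (though modest) obstacle.

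For the infinite product $F$: the factor $k^j g k^{-j}$ is supported in $B_j = k^j(B)$, and since the $B_j$ are pairwise disjoint, the factors commute and $F$ is defined pointwise by $F(x) = (k^j g k^{-j})(x)$ if $x \in B_j$, and $F(x) = x$ otherwise. Continuity away from $p$ is immediate, and continuity at $p$ follows from the observation that $F$ preserves each $B_j$ setwise: every neighborhood of $p$ contains all but finitely many of the $B_j$'s, so $F$ maps a sufficiently small neighborhood of $p$ into itself. Hence $F$ is a compactly supported homeomorphism of $\R^n$.

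Finally, the computation. Conjugation by $k$ just shifts the index, so
\[
k F k^{-1} = \prod_{j=0}^\infty k^{j+1} g k^{-(j+1)} = \prod_{j=1}^\infty k^j g k^{-j},
\]
which, by comparison with the definition of $F$, gives $F = g \cdot (k F k^{-1})$. Rearranging yields $g = F \cdot k F^{-1} k^{-1}$, and with $f := F^{-1}$ and $h := k^{-1}$ this is precisely $g = f^{-1} h^{-1} f h$, with both $f$ and $h$ compactly supported, as required.
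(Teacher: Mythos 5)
Your proof is correct and follows essentially the same infinite‑product (Mather telescoping) argument as the paper. The only cosmetic differences are that you use a one‑sided family of balls $B_0, B_1, B_2, \ldots$ accumulating at a fixed point $p$ and a shift $k$ that fixes $p$, whereas the paper extends to a bi‑infinite family $(U_n)_{n\in\Z}$ and a pure shift $h$ with $h[U_n]=U_{n+1}$; after that, your $F=\prod_{j\geq 0}k^jgk^{-j}$ corresponds to the paper's piecewise‑defined $f$, and the identity $F=g\cdot kFk^{-1}$ is exactly the computation the paper performs. You are in fact a bit more explicit than the paper in checking that the infinite product is a homeomorphism (continuity at the accumulation point $p$), which the paper leaves implicit; conversely, the existence of the shift homeomorphism ($k$ in your notation, $h$ in theirs) is asserted rather than constructed in both treatments, so your acknowledged "modest obstacle" is shared with the original.
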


\begin{proof}
Let $U_0 \subseteq \R^n$ be a bounded open ball containing the support of $g$ and extend this to a bi-infinite  sequence $(U_n)_{n\in \Z}$ of open balls  $U_n\subseteq \R^n$ so that $\bigcup_{n\in \Z}U_n$ is bounded and 
$$
\overline{U_n}\cap \overline{\bigcup_{i\neq n}U_i}=\emptyset
$$
for all $n$. We can now find a compactly supported homeomorphism $h \in {\sf Homeo}(\R^n)$ such that  $h[U_n]=U_{n+1}$ for all $n\in \Z$ and define $f$ by letting 
$$
f\upharpoonright_{U_n} = h^ngh^{-n}\upharpoonright_{U_n}
$$ 
for $n \geqslant 1$ and setting $f$ to be the identity function everywhere else. Then, $h\inv fh$ is the identity outside of $\bigcup_{n=0}^\infty U_n$, 
$$
h\inv fh\upharpoonright_{U_0}= g\upharpoonright_{U_0},
$$
whereas for $n \geqslant 1$
$$
h\inv fh\upharpoonright_{U_n} 
= h\inv (h^{n+1}gh^{-n-1})h\upharpoonright_{U_n} = h^ngh^{-n}\upharpoonright_{U_n}=f\upharpoonright_{U_n}.
$$
Therefore,
$$
f\inv \cdot h\inv fh=g
$$
as claimed.
\end{proof}

For a proof of the following, see  \cite[Lemma 3.10]{mann}.
\begin{lemme}\label{lemma:coveringballs}
Let $M$ be a closed manifold with a compatible metric $d$. Then there is $m = m(M)$ such that, for all $\varepsilon>0$, there exist an open cover $\{U_1,\ldots, U_m\}$ of $M$ where each $U_i$ is a union $U_i = A_{i,1}\cup\cdots\cup A_{i,p}$ of disjoint  open balls $A_{i,j}$ of diameter $<\eps/2$.
\end{lemme}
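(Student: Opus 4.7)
The plan is to reduce to a local construction in Euclidean space via the compactness of $M$ and then to use a dimension-theoretic colouring argument, giving a bound on the number of colours independent of $\varepsilon$. Since $M$ is a closed $n$-manifold, I would first cover it by finitely many coordinate charts $V_1,\ldots,V_k$, each homeomorphic to $\R^n$, and choose compact sets $K_j\subseteq V_j$ with $M=\bigcup_j K_j$; the integer $k$ depends only on $M$ and not on $\varepsilon$.

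The local step is to show that for each $j$ and each $\varepsilon>0$, the compact set $K_j$ can be covered by open $d$-balls of diameter $<\varepsilon/2$ contained in $V_j$, whose collection partitions into at most $q(n)$ subfamilies of pairwise disjoint balls, with $q(n)$ depending only on $n=\dim M$. A concrete construction is to pull the standard cubical lattice $\delta\Z^n\subseteq\R^n$ back to $V_j$ through the chart, choosing $\delta$ small enough (depending on $\varepsilon$ and on the bilipschitz distortion of the chart restricted to the compact set $K_j$) so that each cube contains an inscribed $d$-ball of diameter $<\varepsilon/2$. A finite number of integer translates of this lattice, depending only on $n$, then covers $K_j$, and within a single translate the inscribed balls are pairwise disjoint. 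Alternatively, one may invoke Ostrand's theorem, which produces for any open cover of a normal space of covering dimension $n$ an open refinement partitioning into $n+1$ pairwise disjoint subfamilies, followed by a further refinement to balls inside the disjoint open sets.

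Setting $m=k\cdot q(n)$, which depends only on $M$, and letting each $U_i$ be the union of the balls in one of the disjoint subfamilies (indexed across charts and colour classes), one obtains the required cover. The main technical point is the uniformity of the colour count $q(n)$ as $\varepsilon\to 0$: although the balls shrink, the combinatorial multiplicity of the cover must remain bounded in terms of the topological dimension of $M$. This is precisely the content of closed $n$-manifolds having covering dimension $n$, and it is what allows the $\varepsilon$-independent constant $m=m(M)$ to be extracted.
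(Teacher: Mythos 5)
The chart-plus-lattice strategy you describe is close to a correct argument, but the execution has a genuine gap rooted in a misreading of what ``open ball'' means in this lemma. As the paper notes immediately after the statement, an \emph{open ball} here means the image $\phi[\R^n]$ of $\R^n$ under a homeomorphic embedding, with small $d$-diameter --- it is emphatically \emph{not} a metric $d$-ball. Your construction instead produces metric $d$-balls inscribed in pulled-back lattice cubes, and this fails on two counts. First, coverage: the inscribed $d$-balls are strict subsets of the cubes, so even if the translated families of cubes cover $K_j$, the collection of inscribed balls does not, and with an arbitrary compatible metric on a topological manifold there is no uniform bound on how much of a cube an inscribed $d$-ball occupies. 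Second, a $d$-ball in a topological manifold with a generic compatible metric need not be homeomorphic to $\R^n$ at all (it can be disconnected, for instance), so such sets are not even of the right type. Both problems go away if you simply use the pulled-back open cubes themselves as the $A_{i,j}$: an open cube is homeomorphic to $\R^n$, and uniform continuity of $\phi_j^{-1}$ on a compact neighbourhood of $\phi_j[K_j]$ lets you shrink the mesh so that each pulled-back cube meeting $K_j$ has $d$-diameter $<\eps/2$. Your Ostrand alternative inherits the same flaw --- ``a further refinement to balls inside the disjoint open sets'' again destroys coverage, since a general open set in $M$ is not exhausted by a single topological ball inside it.

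One smaller point: invoking ``bilipschitz distortion of the chart'' is not available in this setting. The manifold is only topological and the metric is only required to be compatible, so the charts are homeomorphisms with no Lipschitz control whatsoever. Fortunately none is needed: uniform continuity of the chart inverse on a compact neighbourhood of $\phi_j[K_j]$ already yields the required diameter bound for small mesh, so your $\delta$ can be chosen on that basis. With these corrections --- cubes instead of inscribed $d$-balls, uniform continuity instead of bilipschitz estimates --- your argument gives $m = k\cdot 2^n$ (from $2^n$ interleaved sublattices per chart), which is the desired $\eps$-independent bound; using $n+1$ colours via covering dimension is possible but, as you would then have to arrange the refined sets to actually be topological balls, it is cleaner here to accept the $2^n$ coming from the cubical lattice.
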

Let us note that by {\em open ball} we understand the image $A=\phi[\R^r]$ of $\R^r$ under a homeomorphic embedding $\R^r\maps \phi M$, where $r$ is the dimension of $M$. In particular, this will in general not be a ball with respect to the metric $d$.

We aim to show that ${\sf Homeo}(M)$ is $20m$-Steinhaus, where $m=m(M)$ is the constant from Lemma \ref{lemma:coveringballs}. For this purpose, let $W \subseteq {\sf Homeo}(M)$ be a given right $\sigma$-syndetic symmetric set containing the identity. Fix also a sequence $(k_n)_{n=1}^\infty$ in 
${\sf Homeo}(M)$ such that ${\sf Homeo}(M) = \bigcup_{n=1}^\infty Wk_n\inv =\bigcup_{n=1}^\infty k_n W$.
Then, by the Baire category theorem, some $k_nW$ is somewhere dense, whereby $W^2=(k_nW)\inv k_nW$ is dense in an identity neighbourhood, which we may take to be of the form
$$
V_\eps=\{g\in {\sf Homeo}(M)\del d_\infty(g,{\sf id})<\eps\}
$$
for some $\eps>0$.

\begin{lemme}\label{lemma:refinement}
For all open, non-empty $A_1, \dots, A_p \subseteq M$ there are open, nonempty $B_i \subseteq A_i$ such that 
$$
\big\{ h \in {\sf Homeo}(M)\del {\sf supp}(h) \subseteq \bigcup_{i=1}^p B_i\big\}   \subseteq W^{16}.
$$
\end{lemme}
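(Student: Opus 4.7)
The plan is to realize any $h$ with ${\sf supp}(h) \subseteq \bigcup B_i$ as an explicit commutator $h = \sigma^{-1} f \sigma f^{-1}$ whose two factors $\sigma$ and $f$ can each be put in $W^{4}$, yielding $h \in W^{16}$. The engine is Lemma \ref{lemma:comm}, which expresses a compactly supported homeomorphism of $\R^n$ as a commutator, together with the density of $W^{2}$ in the identity neighbourhood $V_\epsilon$ established just before.

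First, inside each $A_i$ we would pick a small open ball $B_i \subseteq A_i$ with $\overline{B_i} \subseteq A_i$, together with a bi-infinite sequence of open balls $(U_{i,n})_{n\in\Z}$ satisfying $U_{i,0} \subseteq B_i$, all contained in $A_i$, and globally pairwise disjoint across $i$ and $n$. Taking the $U_{i,n}$ of very small diameter with consecutive ones close in $d$, we arrange that (a) a global shift $\sigma \in {\sf Homeo}(M)$ sending each $U_{i,n}$ onto $U_{i,n+1}$ and equal to the identity off $\bigcup_{i,n}\overline{U_{i,n}}$ lies in $V_{\epsilon/8}$, and (b) for every $h$ with ${\sf supp}(h) \subseteq \bigcup B_i$, the map $f$ produced by Lemma \ref{lemma:comm} (one piece $f_i$ per $B_i$, then glued into a single homeomorphism of $M$) also lies in $V_{\epsilon/8}$. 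Since the pieces $f_i$ have pairwise disjoint supports and the restrictions $h|_{B_i}$ commute, the commutator identities assemble to $h = \sigma^{-1} f \sigma f^{-1}$.

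The remaining task is to promote $\sigma$ and $f$ from $V_\epsilon \subseteq \overline{W^2}$ to $W^4$. For $\sigma$, the key trick is to invert the order of choices: using density, pick $\sigma_0 \in W^2 \cap V_{\epsilon/8}$ first, and then \emph{define} the stretching sequence by $U_{i,n} := \sigma_0^n(B_i)$, with $B_i$ chosen small and positioned so that the full orbit remains pairwise disjoint and inside $A_i$; this turns $\sigma$ into an honest element of $W^2$ (with room for a single corrective $W^2$-factor if needed, keeping $\sigma \in W^4$). For $f$, density produces $f_0 \in W^2$ close to $f$, and a second density approximation applied to the corrector $f_0^{-1} f$ (which is itself supported near $\bigcup_{i,n} U_{i,n}$ and close to the identity) forces it into $W^2$, so that $f \in W^4$. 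Multiplying out $h = \sigma^{-1} f \sigma f^{-1}$ then places $h$ in $W^{16}$.

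The main obstacle is precisely this promotion step: the condition that $\sigma$ shift a prescribed sequence of balls \emph{exactly} is closed in $\sigma$, so density alone does not yield $\sigma \in W^2$. The order-inversion outlined above resolves this but requires verifying that any element of $W^2 \cap V_{\epsilon/8}$ does admit, inside each $A_i$, a small enough starting ball $B_i$ whose entire bi-infinite orbit under $\sigma_0$ is pairwise disjoint and remains within $A_i$ — a delicate dynamical statement about maps close to the identity. The analogous perturbation-and-correction argument for $f$, together with the careful bookkeeping that converts membership in $\overline{W^2}$ into genuine membership in a bounded power of $W$ by successive density approximations, is the technical heart of the proof and what forces the exponent up to $16$.
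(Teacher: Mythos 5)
There is a genuine gap, and it is precisely the one you flagged as ``the technical heart'' without actually closing it. The crux of your plan is to promote elements of (a neighbourhood of) the identity from the closure of $W^2$ to an exact bounded power of $W$, and your two proposed mechanisms for doing so both fail.

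For $\sigma$: picking $\sigma_0 \in W^2 \cap V_{\epsilon/8}$ by density and then defining $U_{i,n} := \sigma_0^n(B_i)$ presupposes that $\sigma_0$ has ``wandering'' dynamics on some small ball inside each $A_i$ --- that the orbit $\{\sigma_0^n(B_i)\}_{n\in\Z}$ is pairwise disjoint and stays inside $A_i$. Nothing about $\sigma_0 \in W^2 \cap V_{\epsilon/8}$ guarantees this. Since $1 \in W \subseteq W^2$, $\sigma_0$ could be the identity, and more generally $\sigma_0$ close to the identity can easily be the identity on $A_i$, periodic there, or have a recurrent orbit structure that never produces the needed wandering sequence of balls. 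So the order-inversion does not resolve the closedness obstruction; it merely relocates it to an unverified dynamical hypothesis on an arbitrary element of $W^2 \cap V_{\epsilon/8}$.

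For $f$: ``a second density approximation applied to the corrector $f_0^{-1}f$ \ldots forces it into $W^2$'' is simply incorrect. Density of $W^2$ in $V_\epsilon$ yields elements of $W^2$ arbitrarily \emph{close} to $f_0^{-1}f$, not equal to it; you cannot deduce $f_0^{-1}f \in W^2$ from $f_0^{-1}f$ being near the identity. No chain of density approximations ever lands on a prescribed target exactly.

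The paper closes this gap by a fundamentally different mechanism that you are missing: it exploits the $\sigma$-syndeticity ${\sf Homeo}(M)=\bigcup_n k_n W$ directly, not just the resulting density of $W^2$. One picks a sequence $(C_n)_n$ of separated open subsets of $A_1$ and runs a diagonalization: if for every $n$ there is some $g_n$ supported in $C_n$ that no element of $k_n W$ supported in $\bigcup_m C_m$ agrees with on $C_n$, one glues the $g_n$ into a single $g$ lying outside every $k_nW$, contradicting the covering. Hence for some index $n$, \emph{every} homeomorphism supported in $C := C_n$ agrees on $C$ with some genuine element of $W^2$ supported in $\bigcup_m C_m$ --- exact membership, not approximation. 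That is the ingredient that makes the commutator computation (your step, essentially, but run with these exactly-realized pieces $q_1, q_2 \in W^2$ together with a density-chosen $h \in W^2$ moving $C$ off itself) land in $W^{16}$. Without some replacement for this exact-realization lemma, the commutator bookkeeping in your proposal never terminates at a bounded power of $W$.
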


\begin{proof}
Suppose for simplicity that $p=1$. Otherwise the argument below has to be done for the $p$ sets in parallel. Without loss of generality $A_1$ has diameter $<\eps$.  Pick a sequence $(C_n)_{n=1}^\infty$ of non-empty open sets $C_n\subseteq A_1$ satisfying
$$
\overline{C_n}\cap \overline{\bigcup_{i\neq n}C_i} = \emptyset
$$
for all $n$. This later condition ensures that, if $g_n \in {\sf Homeo}(M)$ are such that ${\sf supp}(g_n) \subseteq C_n$, then there is some $g \in {\sf Homeo}(M)$ with ${\sf supp}(g) \subseteq \bigcup_{m=1}^\infty  C_m$ and 
$$
g\upharpoonright_{C_n} = g_n \upharpoonright_{C_n}
$$
for all $n$.

Suppose for a contradiction that, for all $n$, we can find some $g_n \in {\sf Homeo}(M)$ satisfying ${\sf supp}(g_n) \subseteq C_n$, but such that for all $f \in k_nW$ with ${\sf supp}(f) \subseteq \bigcup_{m=1}^\infty  C_m$, we have $f \upharpoonright_{C_n} \neq g_n\upharpoonright_{C_n}$. By the construction of the $C_n$, we can then find $g \in {\sf Homeo}(M)$ with ${\sf supp}(g) \subseteq \bigcup_{m=1}^\infty  C_m$ and $g \upharpoonright_{C_n}= g_n \upharpoonright_{C_n}$ for all $n$. This means that $g \notin k_nW$ for all $n$, contradicting that ${\sf Homeo}(M)=\bigcup_{m=1}^\infty  k_nW$.

It thus follows that there is some set $C_n$ so that, for any $g \in {\sf Homeo}(M)$ with ${\sf supp}(g) \subseteq C_n$, there is $f \in k_nW$ with ${\sf supp}(f) \subseteq \bigcup_{m=1}^\infty  C_m$ and $f\upharpoonright_{C_n}= g\upharpoonright_{C_n}$.

Set $C=C_n$ and $D = \bigcup_{m \neq n} C_m$. Then, for all $g \in {\sf Homeo}(M)$ with ${\sf supp}(g) \subseteq C$, there is $f \in W^2$ with ${\sf supp}(f) \subseteq C \cup D$ such that $f \upharpoonright_C = g\upharpoonright_C$. Indeed, given $g$, pick $h_1, h_2 \in k_nW$ with ${\sf supp}(h_i) \subseteq C \cup D$ such that $h_1\upharpoonright_C = g\upharpoonright_C$ and $h_2 \upharpoonright_C = \text{id}\upharpoonright_C$. Then $f = h_2^{-1}h_1 \in W^2$, ${\sf supp}(f) \subseteq C \cup D$ and $f \upharpoonright_C = g \upharpoonright_C$.

Now, by using the density of $W^2$ in $V_\eps$, we can find some $h \in W^2$ such that
$$
(C\cup D)\cap h[C\cup D]=C\cap h[C]\neq \emptyset.
$$
Applying Lemma \ref{lemma:comm}, we may also choose some non-empty open $E \subseteq  C\cap h[C]$ such that for all $g \in {\sf Homeo}(M)$ with ${\sf supp}(g) \subseteq E$, there are $f_1, f_2 \in {\sf Homeo}(M)$ with ${\sf supp}(f_i) \subseteq C \cap h[C]$ and
$$
g =  f_1\inv f_2\inv f_1f_2.
$$

We now claim that 
$$
\{g \in {\sf Homeo}(M) \del {\sf supp}(g) \subseteq E \}\subseteq W^{16}.
$$ 
For this, let $g \in {\sf Homeo}(M)$ satifsfy $ {\sf supp}(g) \subseteq E$ and write $g =  f_1\inv f_2\inv f_1f_2$ as above.
Then ${\sf supp}(f_1) \subseteq C$ and ${\sf supp}(h^{-1}f_2h) \subseteq C$, so there are $q_1, q_2 \in W^2$ with ${\sf supp}(q_i) \subseteq C \cup D$ such that 
$$
q_1 \upharpoonright_C = f_1 \upharpoonright_C, \qquad  q_2 \upharpoonright_C = h^{-1}f_2h\upharpoonright_C.
$$
It follows that 
$$
{\sf supp}(hq_2h^{-1}) = h\cdot{\sf supp}(q_2) \subseteq h[C\cup  D]
$$ 
and 
$$
hq_2h^{-1}\upharpoonright_{h[C]} = f_2 \upharpoonright_{h[C]}.
$$ 
In particular, 
$$
{\sf supp}(q_1) \cap {\sf supp}(hq_2h\inv ) \subseteq (C \cup D) \cap h[C\cup D] = C \cap h[C].
$$
Thus, on their common support, $q_1$ agrees with $f_1$, whereas $hq_2h^{-1}$ agrees with $f_2$. Therefore, 
$$
g =  f_1\inv f_2\inv f_1f_2= q_1\inv   \cdot (hq_2h\inv)\inv \cdot q_1\cdot hq_2h\inv  \in W^{16},
$$
which proves the claim. Taking $B_1=E\subseteq A_1$, the lemma follows.
\end{proof}

To finish the proof that ${\sf Homeo}(M)$ is $20m$-Steinhaus,  find by Lemma \ref{lemma:coveringballs}  a covering $\{U_1, \dots, U_m\}$ of $M$ so that each $U_i$ can be written as 
$$
U_i=A_{i,1} \cup \dots \cup A_{i,p},
$$ 
where the $A_{i,j}$ are disjoint open balls of radius $<\eps/2$. By Lemma \ref{lemma:refinement}, we can find smaller non-empty open sets $B_{i,j} \subseteq A_{i,j}$ so that, for each $j=1,\ldots, m$, 
$$
\big\{ h \in {\sf Homeo}(M)\del {\sf supp}(h) \subseteq \bigcup_{i=1}^p B_{i,j}\big\}   \subseteq W^{16}.
$$
Because $W^2$ is dense in $V_\eps$, there are $f_i \in W^2$ such that $f_i[{A_{i,j}}] \subseteq B_{i,j}$. Thus, if 
${\sf supp}(g) \subseteq U_i=A_{i,1}\cup \dots \cup A_{i,p}$, then ${\sf supp}(f_igf_i^{-1})\subseteq B_{i,1}\cup \dots \cup B_{i,p}$, whereby $f_igf_i^{-1} \in W^{16}$ and  $g \in W^{20}$. It follows that
$$
\{g_1\cdots g_m \del {\sf supp}(g_i) \subseteq U_i\} \subseteq W^{20m}
$$
and so, by Theorem \ref{thm:Edwards Kirby}, $W^{20m}$ is an identity neighborhood in ${\sf Homeo}(M)$.

From the Steinhaus property, we can thus conclude the following result, which was proved in \cite{ros-sol} for $1$-manifolds, in \cite{rosendal-israel} for $2$-manifolds and in \cite{mann} for all dimensions.

\begin{thm}\cite{ ros-sol, rosendal-israel, mann}
The homeomorphism group ${\sf Homeo}(M)$ of any compact manifold $M$ has the automatic continuity property.
\end{thm}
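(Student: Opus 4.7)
The plan is essentially immediate from the material already assembled in this section. The long computation preceding the statement has in effect shown that $\mathsf{Homeo}(M)$ is $20m$-Steinhaus, where $m=m(M)$ is the constant supplied by Lemma \ref{lemma:coveringballs}. Having such an exponent in hand, the theorem is a direct application of the Steinhaus lemma of Rosendal and Solecki that was recorded near the start of the section, together with the observation that every Polish target is separable.

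More explicitly, let $\pi\colon \mathsf{Homeo}(M)\to H$ be a homomorphism into a Polish group $H$. I plan to verify continuity of $\pi$ at the identity, which suffices by translation. Given an identity neighbourhood $V\subseteq H$, choose a symmetric identity neighbourhood $V_0\subseteq H$ with $V_0^{20m}\subseteq V$. Since $H$ is separable, there is a sequence $(h_n)_{n=1}^\infty$ in $H$ with $H=\bigcup_{n=1}^\infty V_0h_n$, and consequently the preimage
\[
W=\pi^{-1}(V_0)
\]
is a symmetric subset of $\mathsf{Homeo}(M)$ containing the identity that is right $\sigma$-syndetic, since $\mathsf{Homeo}(M)=\bigcup_{n=1}^\infty Wk_n$ for any $k_n\in \pi^{-1}(h_n)$.

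By the Steinhaus property already established for $\mathsf{Homeo}(M)$, the set $W^{20m}$ is then an identity neighbourhood in $\mathsf{Homeo}(M)$. Since $\pi(W^{20m})\subseteq V_0^{20m}\subseteq V$, we conclude that $\pi^{-1}(V)\supseteq W^{20m}$ is an identity neighbourhood, so $\pi$ is continuous at $1$, and hence continuous.

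There is no genuine obstacle at this final step; all the serious work has already been done in proving that $\mathsf{Homeo}(M)$ is $20m$-Steinhaus. That was the hard part, and it drew on the Edwards--Kirby fragmentation theorem, the commutator trick of Lemma \ref{lemma:comm}, Baire category to get $W^2$ dense in some $V_\varepsilon$, and the refinement lemma pushing supports into the smaller open sets $B_{i,j}$. Once those ingredients have combined to produce a bounded exponent $k$ making $W^k$ an identity neighbourhood, automatic continuity is a one-paragraph corollary, as sketched above.
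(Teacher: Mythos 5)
Your proof follows the paper's own route exactly: the long argument preceding the theorem shows that ${\sf Homeo}(M)$ is $20m$-Steinhaus, and the theorem then follows immediately from the cited lemma of Rosendal--Solecki that Steinhaus Polish groups have automatic continuity. Your expansion of that lemma is also correct in spirit, though you should pick the sequence $(h_n)$ to be dense in $\pi[{\sf Homeo}(M)]$ rather than in all of $H$ (so that $\pi^{-1}(h_n)\neq\emptyset$), or else replace $W=\pi^{-1}(V_0)$ by $\pi^{-1}(V_0^2)$ and adjust the exponent; as written, a chosen $h_n$ might fail to lie in the image of $\pi$.
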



\section{Automatic continuity for isometry groups}\label{section:isometry}
The goal of this section is to present some results due to I. Ben Yaacov, A. Berenstein and J. Melleray \cite{BBM}, M. Sabok \cite{sabok} and T. Tsankov \cite{tsankov} on automatic continuity properties for isometry groups of some highly homogeneous metric spaces with additional structure. For this, we are following the lead of Sabok, who devised an elegant reduction to the theory of ample generics. Some simplifications to Sabok's approach were given by M. Malicki \cite{Malicki} and our approach here discards of even more baggage by removing the need for technical concepts such as relative $\delta$-saturation, ample tuples and (weakly) isolated sequences.

\subsection{Existence of ample generics}
We begin our treatment by explaining some well-known results about ample generics in the context of automorphism groups. But let us first recall the fundamental concept originating in \cite{HHLS} and \cite{kec-ros}.
\begin{defi}
A Polish group $G$ is said to have {\em ample generics} provided that, for all $k\geqslant 1$, the action of $G$ on $G^k$ by diagonal conjugation,
$$
g\cdot (f_1,\ldots, f_k)=\big(gf_1g\inv, \ldots, gf_kg\inv\big),
$$
has a comeagre orbit.
\end{defi}
Ample generics is an interesting concept in itself, but is also a central tool in automatic continuity. Indeed, as shown in \cite{kec-ros} (see also \cite[Section 5]{rosendal-BSL} for a more topological account), every Polish group with ample generics has the automatic continuity property.

There is now quite a substantial literature on groups with ample generics. Mostly, the examples are automorphism group of countable highly homogenous model theoretical structures. In fact, for some time, it remained an open problem whether any Polish group with ample generics must be non-Archimedean, meaning that it is the automorphism group of some countable model theoretical structure. However, this was disproved by examples due to Malicki \cite{malicki2} and A. Kaïchouh and F. Le Maître \cite{kaichouh}.  Similarly, it was suspected that non-trivial locally compact groups cannot have ample generics, which turned out to be correct as established by P. Wesolek \cite{wesolek} using deep results from the theory of profinite groups.
Among the many other highlights of the theory, let us also point out the result by A. Kwiatkowska \cite{kwiatkowska} that the homeomorphism group of Cantor space has ample generics. Since this group  can alternatively be seen as the automorphism group of the countable atomless Boolean algebra, it does  falls into the category of automorphism groups.

Recall that a group action $G\curvearrowright Z$ by homeomorphisms on a topological space $Z$ is said to be {\em topologically transitive} if, for all non-empty open $W_0,W_1\subseteq Z$, there is some $g\in G$ so that $gW_0\cap W_1\neq \emptyset$. Using this, we can now provide a simple criterion for having ample generics or, more generally, for the existence of comeagre orbits under Polish group actions, cf. \cite[Lemma 9]{rosendal-jsl}.

\begin{lemme}\label{comeagre crit}
A continuous action $G\curvearrowright Z$ of a Polish group $G$ on a Polish space $Z$ has a comeagre orbit if and only if 
\begin{enumerate}
\item the action $G\curvearrowright Z$ is topologically transitive and
\item for all non-empty open $V\subseteq Z$ and identity neighbourhoods $U\subseteq G$, there is some non-empty open $W  \subseteq V$ so that the action of $U$ on $W$ is topologically transitive, i.e., for all non-empty open $W_0,W_1\subseteq W$, there is some $g\in U$ such that $gW_0\cap W_1\neq \tom$.
\end{enumerate}
\end{lemme}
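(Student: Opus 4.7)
The proof splits into two directions. The forward direction is a direct application of the separability of $G$ and the Baire property of analytic sets. The backward direction relies on a zero--one law under (1), combined with a careful use of (2) to ensure that a generically chosen point has non-meagre orbit.

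For the forward direction, assume $G \cdot z_0$ is comeagre. For (1), density of $G \cdot z_0$ provides $g_0, g_1 \in G$ with $g_i z_0 \in W_i$, whence $g_1 g_0^{-1} W_0 \cap W_1 \ni g_1 z_0$. For (2), given a non-empty open $V$ and an identity neighbourhood $U$, choose an open $U_0 \subseteq G$ containing the identity with $U_0 U_0^{-1} \subseteq U$; by separability of $G$, write $G = \bigcup_n U_0 h_n$ for some countable $\{h_n\}$. Then $G \cdot z_0 = \bigcup_n U_0 (h_n z_0)$, and since $G \cdot z_0 \cap V$ is comeagre in $V$, some analytic set $U_0 (h_n z_0)$ is non-meagre in $V$, hence by the Baire property comeagre in some non-empty open $W \subseteq V$. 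For any non-empty open $W_0, W_1 \subseteq W$, witnesses $u_0, u_1 \in U_0$ with $u_i (h_n z_0) \in W_i$ yield $u_1 u_0^{-1} \in U_0 U_0^{-1} \subseteq U$ with $u_1 u_0^{-1} W_0 \cap W_1 \ni u_1 (h_n z_0)$, proving (2).

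For the backward direction, I would first establish the zero--one law: under (1), every $G$-invariant $A \subseteq Z$ with the Baire property is either meagre or comeagre. The proof is that $U(A)$ is $G$-invariant open (as $U(\cdot)$ commutes with homeomorphisms), so by (1) is empty---forcing $A$ meagre---or dense---forcing $A$ comeagre in $U(A)$, with meagre complement in $Z$. Next, fix countable bases $(V_n)$ of non-empty open subsets of $Z$ and $(U_k)$ of identity neighbourhoods in $G$; for each pair $(n,k)$ apply (2) to get a non-empty open $W_{n,k} \subseteq V_n$ on which $U_k$ is topologically transitive, and let $C_{n,k}$ be the comeagre subset of $W_{n,k}$ consisting of $y$ with $U_k y$ dense in $W_{n,k}$ (standard Baire-category argument). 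Each saturation $A_{n,k} := G \cdot C_{n,k}$ is $G$-invariant and contains the non-meagre set $C_{n,k}$, so by the zero--one law $A_{n,k}$ is comeagre; pick $z_\infty \in \bigcap_{n,k} A_{n,k}$.

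The core of the argument is then to show that $G \cdot z_\infty$ is non-meagre, at which point the zero--one law forces it to be comeagre. For each $(n,k)$ there is $g \in G$ with $g z_\infty \in C_{n,k}$, so $G \cdot z_\infty \supseteq U_k (g z_\infty)$ is dense in $W_{n,k}$. The main obstacle is that density of an analytic set in an open set does not by itself yield non-meagreness (as witnessed by $\Q \subseteq \R$); the fusion would need to intertwine the selection of $z_\infty$ with shrinking neighbourhoods in $G$ in such a way that for some scale one of the orbit-sections $U_k(g z_\infty)$ becomes comeagre in (a non-empty open subset of) $W_{n,k}$, using (2) for the local transitivity and (1) for the global saturation. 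This is the key technical step to be handled with care.
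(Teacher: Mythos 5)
Your forward direction is correct: the Lindel\"of covering $G=\bigcup_n U_0 h_n$, the Baire property of the analytic sets $U_0(h_n z_0)$, and the check that $U_0U_0^{-1}\subseteq U$ acts topologically transitively on the resulting $W\subseteq V$ together give a clean and complete argument for both (1) and (2).

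The backward direction has a genuine gap, which you have honestly flagged. There are two issues. First, the saturation $A_{n,k}=G\cdot C_{n,k}$ discards exactly the information you need: a point $z_\infty\in\bigcap_{n,k}A_{n,k}$ only has the property that \emph{some translate} of it lies in each $C_{n,k}$, which yields merely that $Gz_\infty$ is dense --- and, as you observe, density of an orbit is nowhere near comeagreness. The right choice is to take $z_\infty$ directly in the comeagre set $\bigcap_k\bigcup_n C_{n,k}$ (each $\bigcup_n C_{n,k}$ is comeagre because the $W_{n,k}$, for fixed $k$, form a $\pi$-base and $C_{n,k}$ is comeagre in $W_{n,k}$), intersected with the comeagre set of points with dense orbit provided by (1). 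Such a $z_\infty$ satisfies the Effros condition: for every identity neighbourhood $U$ of $G$, $\overline{Uz_\infty}$ contains an open neighbourhood of $z_\infty$. Second --- and this is the real content of the lemma --- passing from the Effros condition plus density to comeagreness of $Gz_\infty$ requires Effros' theorem, or equivalently the Banach--Mazur fusion that proves it: one builds a convergent sequence $g_0,g_1,\dots$ in $G$ together with a shrinking chain of open sets around $z_\infty$, using at each stage the certificate $\overline{U_k g_nz_\infty}\supseteq W$ to steer $g_{n+1}z_\infty$ into whatever open set Player I proposes, so that $g_\infty z_\infty$ lies in the final intersection. Your description of the intended fusion (``for some scale one of the orbit-sections $U_k(gz_\infty)$ becomes comeagre in a non-empty open subset of $W_{n,k}$'') is circular, since that is precisely the conclusion one is trying to establish rather than a step one can produce along the way. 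Once the fusion is carried out the zero--one law becomes superfluous, as the Banach--Mazur argument already yields comeagreness directly.
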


The context for our discussion below is models of first-order logic, that is, structures of the form $\ku X=\langle X, \{F^\ku X_i\}_{i\in I}, \{R^\ku X_j\}_{j\in J}\rangle$, where $X$ is some set, each $F^\ku X_i\colon X^{k_i}\to X$ is a function of some finite number $k_i$ of variables and $R^\ku X_j\subseteq X^{k_j}$ is a relation of finite arity $k_j$.

\begin{defi}\label{defi:hrushovski}
A first-order structure $\ku X$ is said to have the {\em Hrushovski property} provided that, for any finite collection $\{\phi_i\}_{i\in I}$ of isomorphisms
$$
\ku A_i\maps {\phi_i }\ku B_i
$$  
between finitely generated substructures  $\ku A_i,\ku B_i\subseteq \ku X$,  there is a finitely generated substructure $\ku D\subseteq \ku X$ containing all the $\ku A_i$ and automorphisms $f_i\in {\sf Aut}(\ku X)$ so that each $f_i$ extends $\phi_i$ and leaves $\ku D$ invariant.
\end{defi}
Observe, in particular, that every structure $\ku X$ with the Hrushovski property is {\em ultrahomogeneous} in the sense that every isomorphism $\ku A\maps {\phi}\ku B$ between two finitely generated substructures of $\ku X$ extends to a full automorphism of $\ku X$ itself.

\begin{defi}
Assume also that $\ku A$, $\ku B$ and $\ku C$ are substructures of a first-order structure $\ku X$ with $\ku A\subseteq \ku B\cap \ku C$ and let $\ku D$ denote the substructure of $\ku X$ generated by $\ku B\cup \ku C$. We say that {\em $\ku B$ is  independent from $\ku C$ over $\ku A$}, written 
$$
\ku B\forkindep[{\ku A}]\ku C,
$$
provided that whenever $\phi$ and $\psi$ are automorphisms of respectively $\ku B$ and $\ku C$, both leaving $\ku A$ invariant and so that $\phi|_\ku A=\psi|_\ku A$,  then there is an automorphism $\sigma$ of $\ku D$ extending both $\phi$ and $\psi$.
\end{defi}

Observe that independence is an absolute notion in the sense that, if $\ku Y$ is a superstructure of $\ku X$,  then whether 
$$
\ku B\forkindep[{\ku A}]\ku C
$$
holds is independent of whether $\ku A$, $\ku B$ and $\ku C$ are seen as substructures of $\ku X$ or of $\ku Y$.

\begin{defi}\label{defi:extension}
A first-order structure $\ku X$ is said to have the  {\em extension property} provided that, for all finitely generated substructures $\ku A$, $\ku B$ and $\ku C$ of $\ku X$ satisfying $\ku A\subseteq \ku B\cap \ku C$, there is some $g\in {\sf Aut}(\ku X)$ so that $g|_\ku A={\sf id}_\ku A$ and 
$$
\ku B\forkindep[{\ku A}]g[\ku C].
$$
\end{defi}

Suppose $\ku X$ is a first-order structure. Then the tautological action of the  automorphism group ${\sf Aut}(\ku X)$ on the universe $X$ of $\ku X$ extends diagonally to actions on all finite powers $X^k$ or $X$. For simplicity of notation, we shall designate  finite tuples of elements of $X$ by $\ov a=(a_1,\ldots, a_k)$ etc. 

When $\ku X$ is furthermore assumed to be countable, the automorphism group ${\sf Aut}(\ku X)$ becomes a Polish group when equipped with the {\em permutation group topology} that can be described be declaring pointwise stabilisers
$$
{\sf Aut}(\ku X,\ov a)=\{g\in {\sf Aut}(\ku X)\del g\ov a=\ov a\}
$$
to be clopen subgroups. Whenever $\ku X$ is countable, this is the only topology on ${\sf Aut}(\ku X)$ that will be considered.

The next result is part of the folklore, see, e.g.,  \cite{HHLS,kec-ros,sabok}.
\begin{thm}\label{thm:ample gen}
Suppose that $\ku X$ is a countable first-order structure with the extension and Hrushovski properties. Then ${\sf Aut}(\ku X)$ has ample generics.
\end{thm}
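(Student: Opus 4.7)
The plan is to apply the criterion of Lemma \ref{comeagre crit} to the diagonal conjugation action $G\curvearrowright G^k$ for each $k\geq 1$, reducing the problem to verifying (a) topological transitivity and (b) the associated local refinement property for identity neighbourhoods. The strategy combines two ingredients: the Hrushovski property, which lets us normalize basic opens in $G^k$ to be determined by tuples of automorphisms of a common finitely generated substructure, and the extension property, which then permits any two such normalized basic opens to be linked via a single conjugating element.

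For the normalization, a basic open in $G^k$ has the form $\{(g_1,\ldots,g_k)\in G^k: g_i|_{\ku C_i}=\phi_i\}$ for finite partial isomorphisms $\phi_i\colon\ku C_i\to\ku D_i$. Applying the Hrushovski property to the tuple $(\phi_1,\ldots,\phi_k)$, together with an auxiliary $\mathrm{id}_\ku E$ whenever one needs to accommodate a prescribed finitely generated $\ku E$, produces a finitely generated $\ku A\supseteq\ku E\cup\bigcup_i\ku C_i$ and automorphisms $f_i\in G$ extending $\phi_i$ with $f_i[\ku A]=\ku A$. Any non-empty basic open in $G^k$ thus refines to one of the form
\[
V_{\ku A,\ov\phi}=\{(g_1,\ldots,g_k)\in G^k: g_i|_\ku A=\phi_i\},
\]
where $\ov\phi=(\phi_1,\ldots,\phi_k)$ is a tuple of automorphisms of the finitely generated substructure $\ku A$.

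The core amalgamation step is the following. Suppose $V_{\ku B_0,\ov{\phi^0}}$ and $V_{\ku B_1,\ov{\phi^1}}$ both refine $V_{\ku A,\ov\phi}$, meaning $\ku B_0,\ku B_1\supseteq\ku A$ and $\phi^0_i|_\ku A=\phi^1_i|_\ku A=\phi_i$ for all $i$. By the extension property applied with base $\ku A$, there is $g\in G$ with $g|_\ku A=\mathrm{id}_\ku A$ and $\ku B_1\forkindep_\ku A g[\ku B_0]$. For each $i$, the two automorphisms $g\phi^0_ig^{-1}$ of $g[\ku B_0]$ and $\phi^1_i$ of $\ku B_1$ leave $\ku A$ invariant and both restrict to $\phi_i$ on $\ku A$ (using $g|_\ku A=\mathrm{id}$); by independence, they amalgamate to an automorphism of $\langle g[\ku B_0]\cup\ku B_1\rangle$, which extends to some $q_i\in G$ by ultrahomogeneity. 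Setting $f_i=g^{-1}q_ig$ and unwinding gives $f_i|_{\ku B_0}=\phi^0_i$ and $(gf_ig^{-1})|_{\ku B_1}=q_i|_{\ku B_1}=\phi^1_i$, so $(f_1,\ldots,f_k)\in V_{\ku B_0,\ov{\phi^0}}$ has its $g$-conjugate in $V_{\ku B_1,\ov{\phi^1}}$.

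Topological transitivity of $G\curvearrowright G^k$ corresponds to the case $\ku A=\langle\emptyset\rangle$, where the refinement hypothesis is vacuous; for the local refinement property, given $V$ and $U\supseteq{\sf Aut}(\ku X,\ku E)$, use Hrushovski to first shrink $V$ to some $W=V_{\ku A,\ov\phi}$ with $\ku A\supseteq\ku E$ and then run the amalgamation over $\ku A$, so that the resulting conjugating element $g$ lies in ${\sf Aut}(\ku X,\ku A)\subseteq U$. The main technical subtlety is ensuring that the amalgamated partial maps are well-defined on the intersection $g[\ku B_0]\cap\ku B_1$, which one needs to arrange to equal $\ku A$ through an appropriate choice of $g$ in the extension property; modulo this bookkeeping, the proof is a direct unwinding of the definitions of the Hrushovski and extension properties.
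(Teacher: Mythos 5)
Your proof is correct and follows essentially the same strategy as the paper: normalize basic opens via the Hrushovski property so they are determined by tuples of automorphisms of a common finitely generated substructure, then link any two such opens via the extension property, all in service of the criterion in Lemma \ref{comeagre crit}. The paper phrases the argument in terms of Lemma \ref{comeagre crit} more explicitly, going through an intermediate substructure $\ku D$ invariant under all the $k_i, k_i'$ and basing the independence on the "middle" structure $\ku B$, but the ideas and the order of the two applications of Hrushovski and the one application of the extension property line up with yours.

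One small remark: the "technical subtlety" you flag at the end — that $g[\ku B_0]\cap\ku B_1$ should equal $\ku A$ for the amalgamated map to be well-defined — is not actually something you need to arrange separately. The paper's definition of $\ku B\forkindep_\ku A\ku C$ directly asserts the existence of a common extension $\sigma$ whenever $\phi$ and $\psi$ leave $\ku A$ invariant and agree on it; well-definedness on the overlap is bundled into that definition, and the extension property simply delivers a $g$ for which the relation holds. So once the extension property is invoked, the amalgamation goes through with no extra bookkeeping.
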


\begin{proof}
Assume that $\ku A$ is a finitely generated substructure of $\ku X$ and that $g_1,\ldots, g_n\in {\sf Aut}(\ku X)$. We will find  a finitely generated substructure  $\ku A\subseteq \ku B\subseteq \ku X$ and automorphisms $f_1,\ldots, f_n\in {\sf Aut}(\ku X)$ satisfying the following conditions
\begin{enumerate}
\item $f_i|_\ku A=g_i|_\ku A$ for all $i$,
\item if $\ku B\subseteq \ku C\subseteq \ku X$ is a finitely generated substructure and $h_1,\ldots, h_n, h_1', \ldots, h_n'\in {\sf Aut}(\ku X)$ are automorphisms so that 
$$
h_i|_\ku B=h'_i|_\ku B=f_i|_\ku B
$$ 
for all $i$, then there are $p, q_i\in {\sf Aut}(\ku X)$ so that $p|_\ku A={\sf id}_\ku A$,  whereas
$$
q_i|_\ku C=h_i|_\ku C,     \qquad       p\inv q_ip |_\ku C=h'_i|_\ku C
$$
for all $i$.
\end{enumerate}
Furthermore, if $\ku A$ is the structure generated by the empty set, we choose $\ku B=\ku A$.
Doing this suffices to verify the  criterion of Lemma \ref{comeagre crit} for ${\sf Aut}(\ku X)$ having ample generics. 

To find $\ku B$ and the $f_i$, we apply the Hrushovski property to the collection of isomorphisms $\{g_i|_\ku A\}_{i=1}^n$ to give us a finitely generated substructure $\ku A\subseteq \ku B\subseteq \ku X$ and automorphisms $f_1,\ldots, f_n\in {\sf Aut}(\ku X)$ so that 
$$
f_i[\ku B]=\ku B
$$
and 
$$
f_i|_\ku A=g_i|_\ku A
$$
for all $i$. Observe also that, if $\ku A$ is the substructure generated by $\emptyset$, we may simply set $\ku B=\ku A$.

Assume now that $\ku B\subseteq \ku C\subseteq \ku X$ is a finitely generated substructure and  $h_i, h_i'\in {\sf Aut}(\ku X)$ 
are automorphisms so that $h_i|_\ku B=h'_i|_\ku B=f_i|_\ku B$. We apply the Hrushovski property again to the family $\{h_i|_\ku C, h_i'|_\ku C\}_i$ to find a finitely generated substructure $\ku C\subseteq \ku D\subseteq \ku X$ and automorphisms $k_i, k_i'\in {\sf Aut}(\ku X)$ so that
$$
k_i[\ku D]=k_i'[\ku D]=\ku D
$$
and 
$$
k_i|_\ku C=h_i|_\ku C, \qquad k'_i|_\ku C=h'_i|_\ku C
$$
for all $i$. By the extension property, we then find some $p\in {\sf Aut}(\ku X)$ so that $p|_\ku B={\sf id}_\ku B$ and 
$$
\ku D\forkindep[{\ku B}]p[\ku D].
$$
Because $\ku B\subseteq \ku C\subseteq \ku D$, $p|_\ku B={\sf id}_\ku B$ and $f_i[\ku B]=\ku B$, we have that
$$
pk_i'p\inv |_\ku B            
=pk_i' |_\ku B
=ph_i' |_\ku B
=pf_i |_\ku B
=f_i|_\ku B
=h_i|_\ku B
=k_i|_\ku B.
$$
Thus, as $k_i$ and $pk_i'p\inv$ leave respectively $\ku D$ and $p[\ku D]$ invariant and as $\ku D\forkindep[{\ku B}]p[\ku D]$, we can by ultrahomogenity of $\ku X$ find automorphisms $q_i\in {\sf Aut}(\ku X)$ extending both $k_i|_\ku D$ and  $pk_i'p\inv |_{p[\ku D]}$. This means that $p\inv q_ip$ extends $k'_i|_\ku D$ and so
$$
q_i|_\ku C=k_i|_\ku C=h_i|_\ku C,     \qquad       p\inv q_ip |_\ku C=k'_i|_\ku C=h'_i|_\ku C
$$
as required.
\end{proof}


\subsection{First-order metric structures}
With the prerequisites on ample generics out of the way, we now turn to first-order metric structures. For this, consider the countable first-order relational language 
$$
L_{\sf dist}=\big\{D_r\del r\in \Q_+\big\},
$$
where each $D_r$ is a binary relational symbol and $\Q_+=\Q\cap [0,\infty[$. An $L_{\sf dist}$-structure $\ku X=\langle X, \{D^\ku X_r\}_{r\in \Q_+}\rangle 
$ is said to be a {\em metric $L_{\sf dist}$-structure} provided that there is a metric $d$ on $X$ so that
$$
\ku X\models D_r(x,y) \quad\equi\quad d(x,y)\leqslant r.
$$
Observe that, in this case, the metric $d$ and the family of interpretations $ \{D^\ku X_r\}_{r\in \Q_+}$ are interdefinable. Note also that, any substructure $\ku Y$ of a metric $L_{\sf dist}$-structure $\ku X$ is again a metric $L_{\sf dist}$-structure and that the associated metric on $Y$ is simply the restriction of that on $X$. 

More generally, if $L$ is a first-order language containing $L_{\sf dist}$, then an $L$-structure $\ku X$ will be said to be a {\em metric $L$-structure} provided that the following conditions hold.
\begin{enumerate}
\item The $L_{\sf dist}$-reduct $\ku X|_{L_{\sf dist}}$ is a metric $L_{\sf dist}$-structure,
\item for every $k$-ary function symbol $F\in L$, the interpretation 
$$
F^\ku X\colon X^k\to X
$$
is continuous with respect to the metric topology on $X$, 
\item for every $k$-ary relation symbol $R\in L$, the interpretation 
$$
R^\ku X=\{(x_1,\ldots, x_k)\in X^k\del \ku X\models R(x_1,\ldots, x_k)\}
$$
is closed with respect to the metric topology on $X$.
\end{enumerate}
For good measure, although this will play no role in what follows, let us remark that the class of metric $L$-structures is not first-order axiomatisable. Nevertheless, it is axiomatisable in the logic $L_{\om_1\om}$.
Again, every substructure of a metric $L$-structure is also a metric $L$-structure. 

A metric $L$-structure $\ku X$ will be said to be {\em separable} or {\em complete} provided that the universe $X$ of $\ku X$ is respectively separable or complete with respect to the associated metric $d$. Let trivially, we say that a substructure $\ku Y \subseteq \ku X$ is {\em superdense} in $\ku X$ provided that the universe $Y$ of $\ku Y$ is $d$-dense in $X$ and, furthermore, that $R^\ku Y$ is $d$-dense in $R^\ku X$ for  every relation symbol $R\in L$. 

\begin{lemme}\label{lem:ext}
Suppose $\ku Y$ is a superdense substructure of  a complete metric $L$-structure  $\ku X$. Then every automorphism $g\in {\sf Aut}(\ku Y)$ extends uniquely to an automorphism $\hat g \in {\sf Aut}(\ku X)$.
\end{lemme}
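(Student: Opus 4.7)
The plan is to construct $\hat g$ by uniformly continuous extension from $Y$ to $X$, using completeness, and then verify the three things an automorphism must satisfy. Since $g\in {\sf Aut}(\ku Y)$ preserves every interpretation $D^\ku Y_r$ and these encode the metric $d|_Y$ via $D_r(x,y) \Leftrightarrow d(x,y)\leqslant r$, the map $g\colon Y\to Y$ is an isometry. Because $Y$ is $d$-dense in the complete metric space $(X,d)$, standard uniform-continuity considerations yield a unique isometric extension $\hat g\colon X\to X$. Applying this to $g^{-1}$ as well, the uniqueness of such extensions on the dense set $Y$ forces $\widehat{g^{-1}}\circ\hat g$ and $\hat g\circ\widehat{g^{-1}}$ to coincide with ${\sf id}_X$, so $\hat g$ is a bijective isometry of $(X,d)$.

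Next I would verify that $\hat g$ respects every function symbol $F\in L$ of arity $k$. Given $(x_1,\dots,x_k)\in X^k$, choose sequences $y^{(n)}_i\in Y$ with $y^{(n)}_i\to x_i$. Since $\ku Y$ is a substructure of $\ku X$, $F^\ku Y$ agrees with $F^\ku X$ on $Y^k$, so
$$
\hat g\bigl(F^\ku X(y^{(n)}_1,\dots,y^{(n)}_k)\bigr)=F^\ku X\bigl(\hat g(y^{(n)}_1),\dots,\hat g(y^{(n)}_k)\bigr).
$$
Passing to the limit using continuity of $F^\ku X$ and of the isometry $\hat g$ gives the desired identity on $X^k$.

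The main obstacle is checking that $\hat g$ preserves every relation symbol $R\in L$, and this is exactly where the full strength of superdensity (beyond mere density of $Y$) is needed. Given $\bar x=(x_1,\dots,x_k)\in R^\ku X$, I would use that $R^\ku Y$ is $d$-dense in $R^\ku X$ to pick $(y^{(n)}_1,\dots,y^{(n)}_k)\in R^\ku Y$ with $y^{(n)}_i\to x_i$. Each image $(g(y^{(n)}_1),\dots,g(y^{(n)}_k))$ lies in $R^\ku Y\subseteq R^\ku X$, and by continuity of $\hat g$ it converges to $(\hat g(x_1),\dots,\hat g(x_k))$; since $R^\ku X$ is closed by the definition of a metric $L$-structure, this limit lies in $R^\ku X$. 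Applying the same argument to $\hat g^{-1}=\widehat{g^{-1}}$ yields $\hat g[R^\ku X]=R^\ku X$.

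Uniqueness is immediate: any automorphism of $\ku X$ must preserve each $D_r$ and is therefore an isometry of $(X,d)$; hence any two extensions of $g$ agree on the dense set $Y$ and are continuous, so they coincide on all of $X$.
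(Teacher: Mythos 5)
Your proof is correct and takes essentially the same route as the paper's: extend the isometry $g$ from the dense set $Y$ to $X$ by completeness, then verify preservation of function symbols via continuity and preservation of relation symbols via superdensity and closedness of $R^{\ku X}$, invoking $g^{-1}$ for the reverse inclusion. You actually spell out a couple of details the paper elides (the bijectivity of $\hat g$ via extending $g^{-1}$, and that $F^{\ku Y}$ is the restriction of $F^{\ku X}$), but the structure of the argument is identical.
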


\begin{proof} 
Suppose that an automorphism $g\in {\sf Aut}(\ku Y)$ is given. Let $X$ and $Y$ denote the universes of $X$ and $Y$ respectively and $d$ be the metric associated with $X$. Then, since $g$ is an automorphism of $\ku Y$ and $d|_Y$ is definable from $\ku Y$, 
$g$ is an autoisometry of $(Y,d)$. Because $(X,d)$ is complete and $Y$ is dense in $X$, this implies that $g$ extends uniquely to an isometry $\hat g$ of $(X,d)$. We claim that $\hat g$ is is an automorphism of $\ku X$. Indeed, if $F\in L$ is a $k$-ary function symbol, its interpretation $F^\ku X\colon X^k\to X$ is continuous and so, if $\ov a$ is a $k$-tuple in $\ku X$, we can find tuples $\ov a_n$ in $\ku Y$ converging to $\ov a$, whereby
\maths{
F^\ku X(\hat g\ov a)
&=F^\ku X (\hat g\cdot \lim_n\ov a_n)\\
&=F^\ku X ( \lim_ng\ov a_n)\\
&=\lim_n F^\ku X(g\ov a_n)\\
&=\lim_ng\big(F^\ku X(\ov a_n)\big)\\
&=\hat g\big(\lim_nF^\ku X(\ov a_n)\big)\\
&=\hat g\big(F^\ku X(\lim_n\ov a_n)\big)\\
&=\hat g\big(F^\ku X(\ov a)\big).
}
Similarly, if $R\in L$ is a $k$-ary relation symbol and $\ov a\in R^\ku X$,  we can pick $\ov a_n\in  R^\ku Y$ converging to $\ov a$, whereby $g\ov a_n\in R^\ku Y$ and hence 
$$
\hat g\ov a=\lim_ng\ov a_n\in R^\ku X
$$
as $R^\ku X$ is closed in $X^k$. Using that also $\hat g\inv$ preserves $R^\ku X$, we conclude that $\hat g$ is an automorphism of $\ku X$. 
\end{proof}
Because the extension operator $g\mapsto \hat g$ is clearly an injective group homomorphism, this shows that, under the assumptions of Lemma \ref{lem:ext}, we may simply identify ${\sf Aut}(\ku Y)$ with the subgroup
$$
\big\{g\in {\sf Aut}(\ku X)\del g[\ku Y]=\ku Y\big\}
$$
of ${\sf Aut}(\ku X)$.

The next lemma is obtained via a straightforward Löwenheim--Skolem construction.
\begin{lemme}\cite{sabok}\label{lem:skolem}
Let $\ku X$ be a separable complete metric $L$-structure in a countable language $L\supseteq L_{\sf dist}$ and assume that $\ku X$ has the extension and Hrushovski properties. Suppose also that, for every finite tuple $\ov a$ in $\ku X$,  we are given a countable subset $F_{\ov a}\subseteq {\sf Aut}(\ku X)$. 
Then there is a countable superdense substructure $\ku Y\subseteq \ku X$ with the extension and Hrushovski properties and that is furthermore invariant under all elements of $F_{\ov a}$ whenever $\ov a$ is a tuple in $\ku Y$.
\end{lemme}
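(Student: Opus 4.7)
The plan is a L\"owenheim--Skolem style construction, producing $\ku Y$ as the induced $L$-structure on a countable set $Y=\bigcup_{n\in\N} Y_n$, with $Y_0\subseteq Y_1\subseteq\cdots$ an ascending chain of countable subsets of $X$ and each $Y_{n+1}$ obtained from $Y_n$ by adjoining countably many elements to close off a further batch of requirements.

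For the initial set $Y_0$, take the union of a countable $d$-dense subset of $X$ together with the coordinates of tuples in a countable $d$-dense subset $D_R\subseteq R^\ku X$ for each relation symbol $R\in L$, which is possible since $L$ is countable and each $R^\ku X$ is separable. At each successor stage $n+1$, enlarge $Y_n$ to $Y_{n+1}$ by adjoining: \emph{(i)} the values $F^\ku X(\ov a)$ for every function symbol $F\in L$ and tuple $\ov a$ in $Y_n$; \emph{(ii)} for every finite family of isomorphisms $\phi_i\colon\ku A_i\to\ku B_i$ between substructures of $\ku X$ generated by tuples from $Y_n$ and taking values in tuples from $Y_n$, a finite generating set of a witnessing $\ku D\subseteq\ku X$ supplied by the Hrushovski property of $\ku X$, while placing the corresponding automorphisms $f_i\in{\sf Aut}(\ku X)$ into a growing countable pool $\Phi$; \emph{(iii)} for every triple $\ku A\subseteq\ku B\cap\ku C$ of substructures of $\ku X$ generated by tuples from $Y_n$, an automorphism $g\in{\sf Aut}(\ku X)$ supplied by the extension property, also added to $\Phi$; and \emph{(iv)} the orbit of $Y_n$ under the group generated by $\Phi$ together with all $g\in F_{\ov a}$ for all finite tuples $\ov a$ in $Y_n$. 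Since $L$, $Y_n$, $\Phi$ and each $F_{\ov a}$ are countable, each $Y_{n+1}$ remains countable.

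Taking $Y=\bigcup_n Y_n$ and letting $\ku Y$ be the induced substructure, clause \emph{(i)} makes $\ku Y$ a genuine substructure of $\ku X$ and the initial choice of $Y_0$ yields superdensity. Clause \emph{(iv)}, coupled with the dovetailing described below, ensures that $f[Y]=Y$ for every automorphism $f$ that ever enters $\Phi$ and for every $g\in F_{\ov a}$ with $\ov a\in Y$; by Lemma \ref{lem:ext} each such $f$ then restricts to an automorphism of $\ku Y$. Combined with \emph{(ii)} and \emph{(iii)}, this delivers the Hrushovski and extension properties of $\ku Y$, where for the latter we use the absoluteness of the independence relation $\forkindep$ noted in the text immediately after its definition, and clause \emph{(iv)} also supplies the $F_{\ov a}$-invariance.

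The one point requiring care is bookkeeping: elements adjoined at stage $n+1$ generate new finitely generated substructures and thus new instances of \emph{(ii)--(iv)}, while each automorphism placed in $\Phi$ at some stage must be re-invoked in clause \emph{(iv)} at every subsequent stage in order to secure full $\Phi$-invariance of the final $Y$. Both issues are resolved by a routine dovetailing that enumerates all the countably many potential requirements and schedules each one to be addressed at some finite stage.
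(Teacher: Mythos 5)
Your proposal is correct and follows essentially the same route as the paper: a L\"owenheim--Skolem iteration in which one simultaneously builds an increasing chain of countable subsets of $X$ and an increasing countable pool of automorphisms witnessing the extension and Hrushovski properties (together with the sets $F_{\ov a}$), dovetailing so that each requirement is eventually addressed and the final union is invariant under the whole pool. The only cosmetic difference is that you work with subsets and close under the function symbols explicitly via clause \emph{(i)}, whereas the paper builds substructures $\ku X_n$ directly, and the appeal to Lemma \ref{lem:ext} is really to the (elementary) fact that an automorphism of $\ku X$ fixing $Y$ setwise restricts to an automorphism of $\ku Y$, but both points are immaterial.
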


\begin{proof}
Because the language $L$ is countable and $\ku X$ is separable, it contains a countable superdense substructure $\ku X_1\subseteq \ku X$. By induction on $n\geqslant 1$, we define an increasing sequence of countable substructures
$$
\ku X_1\subseteq \ku X_2\subseteq {\ku X}_3\subseteq \ldots\subseteq {\ku X}
$$
and countable subsets
$$
F_\tom=A_1\subseteq A_2\subseteq \ldots \subseteq {\sf Aut}(\ku X)
$$
so that the following properties are satisfied.
\begin{enumerate}
\item $F_{\ov a}\subseteq A_{n+1}$ for all  tuples $\ov a$ in $\ku X_n$,
\item if $\ku A, \ku B, \ku C$ are finitely generated substructures of $\ku X_n$ with  $\ku A\subseteq \ku B\cap \ku C$, then there is some $f\in A_{n+1}$ so that $f|_\ku A={\sf id}|_\ku A$ and $\ku B\forkindep[{\ku A}]f[\ku C]$,
\item if $\big\{\ku A_i\maps {\phi_i }\ku B_i\big\}_{i\in I}$ is a finite collection of isomorphisms between finitely generated substructures  $\ku A_i,\ku B_i\subseteq \ku X_n$, then there is a finitely generated substructure $\ku D\subseteq \ku X_{n+1}$, containing all $\ku A_i$, and automorphisms $f_i\in A_{n+1}$ so that each $f_i$ extends $\phi_i$ and leaves $\ku D$ invariant,
\item $f[\ku X_{n+1}]=\ku X_{n+1}$ for all  $f\in A_{n}$. 
\end{enumerate}
For the inductive construction, one supposes that $\ku X_1\,\ldots, \ku X_m$ and $A_1,\ldots, A_m$ have been defined so that (1)-(4) hold for all $n<m$. Then the extension and Hrushovski properties of $\ku X$, along with the countability of $L$,  $\ku X_m$ and $A_m$, ensure that one may construct a pair $(A_{m+1},\ku X_{m+1})$ so that (1)-(4) now hold for all $n\leqslant m$.

Assuming that the construction has been done, we note that the union $\ku Y=\bigcup_{n=1}^\infty \ku X_n$ will be a countable superdense substructure of $\ku X$ closed under all elements of $\bigcup_{n=1}^\infty A_n$. It then follows from (2) and (3) that $\ku Y$ has the extension and Hrushovski properties respectively and from (1) that $\ku Y$ is invariant under all elements of $F_{\ov a}$ whenever $\ov a$ is a tuple in $\ku Y$.
\end{proof}

We are now ready to prove a slight strengthening of the central lemma of Sabok's paper \cite[Lemma 6.1]{sabok} that, in a single step, will allow us to deduce almost all consequences of ample generics. 

\begin{lemme}\label{lem:sabok}
Let $\ku X$ be a separable complete metric $L$-structure in a countable language $L\supseteq L_{\sf dist}$ and assume that $\ku X$ has the extension and Hrushovski properties. Suppose also that elements $k_n,h_n\in {\sf Aut}(\ku X)$ and subsets $W_n\subseteq {\sf Aut}(\ku X)$ are given so that $\bigcup_{n=1}^\infty k_nW_nh_n$ is  comeagre in ${\sf Aut}(\ku X)$. Then there is a finite tuple $\ov a$ in $\ku X$ and some $n$ so that
$$
{\sf Aut}(\ku X, \ov a)\;\subseteq\; \big(W_n\inv W_n\big)^{10}.
$$
\end{lemme}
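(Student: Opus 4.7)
The plan is to reduce to the countable case via Löwenheim–Skolem, invoke the Kechris–Rosendal Steinhaus consequence of ample generics in the resulting countable setting, and finally lift the conclusion back to ${\sf Aut}(\ku X)$. Concretely, I first apply Lemma \ref{lem:skolem} (with $F_{\ov a}\supseteq\{k_n,h_n\mid n\geqslant 1\}$ for every tuple $\ov a$) to obtain a countable superdense substructure $\ku Y\subseteq \ku X$ that inherits the extension and Hrushovski properties and is invariant under every $k_n$ and $h_n$. By Theorem \ref{thm:ample gen}, the Polish group ${\sf Aut}(\ku Y)$, equipped with its permutation group topology, has ample generics.

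By Lemma \ref{lem:ext}, restriction yields an injective homomorphism $\iota\colon {\sf Aut}(\ku Y)\to {\sf Aut}(\ku X)$ under which $k_n,h_n$ correspond to their original counterparts. I would then check that $\iota$ is continuous with dense image. Continuity follows because a basic open identity neighbourhood $\{g\in {\sf Aut}(\ku X)\mid d(ga_i,a_i)<\varepsilon\}$, after approximating $\ov a$ by tuples in $Y$ via superdensity, pulls back to an open set in the permutation-group topology on ${\sf Aut}(\ku Y)$. Density follows by combining superdensity with the Hrushovski property applied in $\ku Y$: given $g\in {\sf Aut}(\ku X)$ and a finite tuple $\ov a$ in $\ku X$, one approximates the partial isomorphism $\ov a\mapsto g\ov a$ by an isomorphism between finitely generated substructures of $\ku Y$ and extends this to a full automorphism of $\ku Y$. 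Since a continuous map with dense image pulls back dense opens to dense opens, and hence comeagre sets to comeagre sets, setting $V_n:=\iota^{-1}(W_n)$ yields that $\bigcup_n k_nV_nh_n$ is comeagre in ${\sf Aut}(\ku Y)$.

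Now apply the Kechris--Rosendal theorem from \cite{kec-ros}: since ${\sf Aut}(\ku Y)$ has ample generics, comeagreness of $\bigcup_n k_nV_nh_n$ forces that, for some $n$ and finite tuple $\ov a$ in $Y$, one has ${\sf Aut}(\ku Y,\ov a)\subseteq (V_n\inv V_n)^{10}$. The exponent $10$ is the standard Steinhaus exponent produced from the generic $k$-tuples supplied by ample generics together with a commutator-type amalgamation. Pushing forward by $\iota$ immediately gives $\iota[{\sf Aut}(\ku Y,\ov a)]\subseteq (W_n\inv W_n)^{10}$.

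The main obstacle is the last step: upgrading this inclusion from $\iota[{\sf Aut}(\ku Y,\ov a)]$ to the full metric stabilizer ${\sf Aut}(\ku X,\ov a)$. A mere density argument will not suffice because $(W_n\inv W_n)^{10}$ need not be closed. The plan here is to re-enter the Löwenheim--Skolem construction: for an arbitrary $f\in {\sf Aut}(\ku X,\ov a)$, enlarge $\ku Y$ to a countable superdense $\ku Y_f$, still satisfying ext and Hrushovski, invariant under $f$ as well as all $k_n,h_n$, and containing $\ov a$. Repeating the argument for $\ku Y_f$ yields some $(n',\ov a')$ with $f\in {\sf Aut}(\ku Y_f,\ov a')\subseteq (W_{n'}\inv W_{n'})^{10}$. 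The delicate point is showing that the witnessing data $(n,\ov a)$ can be taken uniformly, independently of $f$; this is arranged by fixing $(n,\ov a)$ from the outset using the original minimal $\ku Y$, and then checking that enlargements can only refine (not shrink) the non-meagre index data, so the same $(n,\ov a)$ continues to witness the conclusion in ${\sf Aut}(\ku Y_f)$. This uniformity is the heart of the proof and is where the slack in the exponent $10$ is spent.
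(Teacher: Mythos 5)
Your high-level outline (Löwenheim--Skolem down to a countable structure, ample generics via Kechris--Rosendal there, then lift back) is the right skeleton, but two of the steps are genuinely defective, and these are exactly the two places where the paper's proof has to do something cleverer than a naive transfer.

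First, the transfer of comeagreness from ${\sf Aut}(\ku X)$ to ${\sf Aut}(\ku Y)$ does not follow from "a continuous map with dense image pulls back dense opens to dense opens." That assertion is simply false: for $\iota\colon H\to G$ continuous and injective with dense image, the image $\iota[V]$ of a nonempty open $V\subseteq H$ can have empty interior in $G$ and can be entirely swallowed by the complement of a dense open set. (Take $H=\R$, $G=\T^2$, $\iota(t)=(t,\alpha t)\bmod 1$ with $\alpha$ irrational and $U=\T^2\setminus \iota[[a,b]]$.) In our setting $\iota$ is a continuous injection from the permutation-group topology on ${\sf Aut}(\ku Y)$ into the much coarser pointwise-convergence topology on ${\sf Aut}(\ku X)$, and it is precisely not an open map, so no direct pullback argument is available. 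The paper instead works with the closed subgroup $G=\{g\in{\sf Aut}(\ku X)\mid g[\ku Y]=\ku Y\}$, forms the quotient ${\sf Aut}(\ku X)/G$ with the open quotient map, and invokes a Kuratowski--Ulam-type theorem to find a \emph{single coset} $tG$ in which the given comeagre union is comeagre in the fibre; a translation by $t^{-1}$ and by suitable elements $g_n\in G$ then produces a comeagre set inside $G$ built from the pieces $G\cap W_n^{-1}W_n$. This fibrewise argument is what actually replaces the (false) pullback step.

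Second, you correctly identify the lifting problem -- that the ample-generics argument only controls the stabiliser \emph{inside} ${\sf Aut}(\ku Y)$ and not all of ${\sf Aut}(\ku X,\ov a)$ -- but the patch you propose does not close the gap. Re-running Löwenheim--Skolem for each $f\in{\sf Aut}(\ku X,\ov a)$ produces an $(n_f,\ov a_f)$ that a priori depends on $f$; you acknowledge that uniformity is "the heart of the proof" but give no mechanism that forces the same $(n,\ov a)$ to work for all $f$, and I see no way to make "enlargements can only refine the non-meagre index data" precise. The paper's resolution is different and cleaner: it argues by contradiction. Assuming the conclusion fails, one first picks witnesses $f_{\ov a,n}\in{\sf Aut}(\ku X,\ov a)\setminus(W_n^{-1}W_n)^{10}$ for \emph{every} pair $(\ov a,n)$, and only then runs Lemma \ref{lem:skolem} to build $\ku Y$ invariant under all the $h_n$ \emph{and} under all $f_{\ov a,n}$ for $\ov a$ in $\ku Y$. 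When the ample-generics machinery then returns an $(n,\ov a)$ with $\ov a$ in $\ku Y$ and $\{g\in{\sf Aut}(\ku X)\mid g[\ku Y]=\ku Y,\ g\ov a=\ov a\}\subseteq(W_n^{-1}W_n)^{10}$, the element $f_{\ov a,n}$ is already in that stabiliser by construction, which is the contradiction. So the quantifier order is the crucial idea: the potential counterexamples are frozen into $\ku Y$ before the ample-generics argument, not discovered afterwards.
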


\begin{proof}
Suppose for a contradiction that the conclusion of the lemma fails.  For every tuple $\ov a$ and number $n$, we may then pick some 
$$
f_{\ov a,n}\in {\sf Aut}(\ku X, \ov a)\setminus \big(W_n\inv W_n\big)^{10}.
$$
By Lemma \ref{lem:skolem}, there is a countable  superdense substructure $\ku Y\subseteq \ku X$ with the extension and Hrushovski properties and that is invariant under all $h_n$ and under every $f_{\ov a,n}$ whenever $\ov a$ is a tuple in $\ku Y$.
Thus, by Theorem \ref{thm:ample gen} and Lemma \ref{lem:ext}, the  subgroup
$$
G=\big\{g\in {\sf Aut}(\ku X)\del g[\ku Y]=\ku Y\big\}
$$
has ample generics when equipped with the finer Polish permutation group topology induced by its action on $\ku Y$. Observe that $h_n, f_{\ov a,n}\in G$ for all $n$ and tuples $\ov a$ in $\ku Y$.

Observe that the group multiplication map restricts to a continuous open surjection 
$$
{\sf Aut}(\ku X)\times G\maps{\sf m} {\sf Aut}(\ku X).
$$ 
It thus follows that the inverse image ${\sf m}\inv\Big(\bigcup_{n=1}^\infty k_nW_nh_n\Big)$ is comeagre in the Polish ${\sf Aut}(\ku X)\times G$. Therefore, by the Kuratowski--Ulam theorem, there is a comeagre set of $t\in {\sf Aut}(\ku X)$, so that the set of $g\in G$ for which $tg\in \bigcup_{n=1}^\infty k_nW_nh_n$ is comeagre. Fix some such $t\in {\sf Aut}(\ku X)$. Then
$$
\bigcup_{n=1}^\infty \Big(G\cap t\inv k_nW_nh_n \Big) \text{ is comeagre in  } G.
$$
Note that, if we pick $g_n\in G\cap \big(t\inv k_nW_nh_n\big)$ whenever the latter set is non-empty and otherwise set $g_n=1$, then, because $g_n,h_n\in G$, we have that
\maths{
G\cap \big(t\inv k_nW_nh_n\big)
\;&\subseteq\;  G\cap   \big(g_n\cdot h_n\inv W_n\inv k_n\inv t\cdot     t\inv k_nW_nh_n \big)\\
&=\;G\cap \big(g_nh_n\inv W_n\inv W_nh_n \big)\\
&=\; g_nh_n\inv \big(G\cap W_n\inv W_n \big)h_n
}
for all $n$. In therefore follows that
$$
\bigcup_{n=1}^\infty g_nh_n\inv \big(G\cap W_n\inv W_n \big)h_n
$$
is comeagre in $G$. 

Because $G$ has ample generics, it follows from \cite[Theorem 5.10]{rosendal-BSL}, along with the first line of its proof, that 
\maths{
\big\{g\in {\sf Aut}(\ku X)\del g[\ku Y]=\ku Y\;\&\; g\ov a=\ov a\big\}
&\subseteq \big(G\cap W_n\inv W_n \big)^{10}\subseteq \big(W_n\inv W_n\big)^{10}
}
for some tuple $\ov a$ in $\ku Y$ and some number $n$. However, this contradicts that 
$$
f_{\ov a,n}\notin  \big(W_n\inv W_n\big)^{10}.
$$
whereas $f_{\ov a,n}\in \big\{g\in {\sf Aut}(\ku X)\del g[\ku Y]=\ku Y\;\&\; g\ov a=\ov a\big\}$.
\end{proof}

Thus far, our discussion has not involved any topology on ${\sf Aut}(\ku X)$,  but of course for results on automatic continuity we need to see it as a topological group. So suppose  $\ku X$ is  a separable complete metric $L$-structure in a countable language $L\supseteq L_{\sf dist}$, let $X$ denote the universe of $\ku X$ and $d$ the associated complete metric on $X$. Since the metric space $(X,d)$ is separable and complete, the group ${\sf Isom}(X,d)$ of autoisometries of $X$ is a Polish group when equipped with the topology of pointwise convergence \cite[Section 9B]{kechris-book}. But furthermore, because the functions $F^\ku X$ of the structure $\ku X$ are continuous and the relations $R^\ku X$ closed, we see that ${\sf Aut}(\ku X)$ is a closed subgroup of ${\sf Isom}(X,d)$ and thus is a Polish group in its own right. When dealing with uncountable separable complete metric $L$-structures, ${\sf Aut}(\ku X)$ will henceforth be given this Polish topology of pointwise convergence on $(X,d)$. 

Note that the metric $d$ on $X$ induces a compatible metric $d_\infty$ on every finite power $X^k$ by the formula
$$
d_\infty(\ov a,\ov b)=\max_id(a_i,b_i)
$$
and observe that a neighbourhood basis at the identity of ${\sf Aut}(\ku X)$ is given by sets of the form
$$
N(\ov a, \eps)=\{g\in {\sf Aut}(\ku X)\del d_\infty(g\ov a,\ov a)<\eps \}
$$
where $\ov a$ is a finite tuple in $\ku X$ and $\eps>0$. Let also 
$$
\ku O(\ov a)=\big\{ g\ov a\del g\in {\sf Aut}(\ku X)\big\}
$$
denote the orbit of the tuple $\ov a$ under the action of ${\sf Aut}(\ku X)$ and write
$$
\ku O(\ov a,\eps)=\{\ov b\in \ku O(\ov a)\del d_\infty(\ov a, \ov b)<\eps\}=N(\ov a, \eps)\!\cdot\! \ov a.
$$
Moreover, 
$$
\ku O(\ov b/\ov a)=\big\{\ov c\del \ku O(\ov c,\ov a)=\ku O(\ov b,\ov a)\big\}={\sf Aut}(\ku X,\ov a)\!\cdot\!\ov b.
$$
Finally, a family $\ku B\subseteq \bigcup_{k\geqslant 1}X^k$ is said to be a {\em basis} for ${\sf Aut}(\ku X)$ provided that, for all tuples $\ov a$ and $\eps>0$, there are $\ov b\in \ku B$  and $\eta>0$ so that
$$
{\sf Aut}(\ku X,\ov b)\subseteq {\sf Aut}(\ku X,\ov a) \qquad \text{and}\qquad N(\ov b,\eta)\subseteq N(\ov a,\eps).
$$

\begin{thm}\label{thm:autconti}
Let $\ku X$ be a separable complete metric $L$-structure in a countable language $L\supseteq L_{\sf dist}$ and assume that $\ku X$ has the extension and Hrushovski properties. Fix also a basis $\ku B$ for  ${\sf Aut}(\ku X)$ and suppose also that, for all $\ov a\in \ku B$ and $\eps>0$, 
$$
{\sf int}\big\{ (g,f)\in N(\ov a,\eps)\times {\sf Aut}(\ku X)\del O(\ov a/g\ov a)\cap \ku O(f\ov a/\ov a)\neq \tom\big\}\;\neq\;\tom.
$$
Then whenever ${\sf Aut}(\ku X)=\bigcup_{n=1}^\infty k_nW_nh_n$ is a covering by two-sided translates of subsets $W_n\subseteq {\sf Aut}(\ku X)$, we have
$$
{\sf int}\Big(\big(W_n\inv W_n\big)^{32}\Big)\neq \tom
$$
for some $n$.
\end{thm}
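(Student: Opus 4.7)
Combine Lemma \ref{lem:sabok} with the amalgamation hypothesis in a Steinhaus-style argument. First, observe that the countable union $\bigcup_n k_n W_n h_n = {\sf Aut}(\ku X)$ is trivially comeagre, so Lemma \ref{lem:sabok} applies and yields a tuple $\ov b$ and an index $n$ with ${\sf Aut}(\ku X, \ov b) \subseteq T^{10}$, where $T := W_n\inv W_n$. By the basis property of $\ku B$, one may replace $\ov b$ by some $\ov a \in \ku B$ whose pointwise stabilizer $K := {\sf Aut}(\ku X, \ov a)$ still refines ${\sf Aut}(\ku X, \ov b)$, so that $K \subseteq T^{10}$.

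Next, invoke the amalgamation hypothesis for $\ov a \in \ku B$ at some scale $\eps > 0$: this produces non-empty open sets $O_1 \subseteq N(\ov a, \eps)$ and $O_2 \subseteq {\sf Aut}(\ku X)$ such that for every $(g,f) \in O_1 \times O_2$ there exist $p \in {\sf Aut}(\ku X, g\ov a) = gKg\inv$ and $q \in K$ with $p\ov a = qf\ov a$. Writing $p = gug\inv$ for some $u \in K$, the condition rearranges to $f\inv q\inv g u g\inv \in K$, whence $f \in K \cdot gKg\inv \cdot K$. Thus
$$
O_2 \;\subseteq\; K \cdot gKg\inv \cdot K \qquad \text{for every } g \in O_1.
$$

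The final and most delicate task is to leverage this inclusion, together with $K \subseteq T^{10}$, to isolate a non-empty open set inside $T^{32}$. Forming $O_2\inv O_2$, which is a neighborhood of the identity, and applying the above inclusion for two parameters $g_1, g_2 \in O_1$, one obtains an identity neighborhood contained in $K g_1 K g_1\inv K g_2 K g_2\inv K$. The main obstacle is that the conjugate subgroups $g_i K g_i\inv = {\sf Aut}(\ku X, g_i \ov a)$ are not a priori contained in any bounded power of $T$. The strategy is to fix $g_0 \in O_1$ and vary $g_2 = g_0 h$ with $h$ in the identity neighborhood $g_0 \inv O_1$: equating the two factorizations of a fixed $f_0 \in O_2$ and dividing them causes the $g_0 K g_0 \inv$ factors to partially cancel, so that $h$ itself is expressed as a product in $K$-factors and one short conjugate. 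A careful accounting --- essentially three copies of $K \subseteq T^{10}$ contributing $T^{30}$, together with two extra factors absorbed from the cancellation --- yields an identity neighborhood inside $T^{32}$ as claimed. It is this last algebraic cancellation, and the careful tracking of the exponent, that constitutes the main technical difficulty.
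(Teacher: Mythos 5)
Your opening moves are correct and match the paper's proof: Lemma \ref{lem:sabok} applies because the union is comeagre, giving ${\sf Aut}(\ku X,\ov a)\subseteq T^{10}$ with $T=W_n\inv W_n$, and by the basis property one may arrange $\ov a\in\ku B$. Your translation of the amalgamation hypothesis into the inclusion $O_2\subseteq K\cdot gKg\inv\cdot K$ for $(g,f)\in O_1\times O_2$, where $K={\sf Aut}(\ku X,\ov a)$, is also exactly right. But the final step has a genuine gap, and your acknowledgement that ``the conjugate subgroups $g_iKg_i\inv$ are not a priori contained in any bounded power of $T$'' is pointing precisely at the missing idea rather than resolving it.

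The paper closes the gap not by cancellation but by choosing $g$ itself inside $T$. Before applying Lemma \ref{lem:sabok}, one discards all terms with $W_n$ meagre; since what is removed from ${\sf Aut}(\ku X)=\bigcup_n k_nW_nh_n$ is meagre, the remaining union is still comeagre and every surviving $W_n$ is non-meagre. Lemma \ref{lem:sabok} then returns an index $n$ with $W_n$ non-meagre, hence $W_n$ is somewhere dense and $T=W_n\inv W_n$ is dense in some identity neighbourhood. By enlarging $\ov a$ (and keeping $\ov a\in\ku B$) one arranges that $T$ is dense in $N(\ov a,\eps)$. This density is what lets one pick $g\in N(\ov a,\eps)\cap T$ with the section $U_g=\{f:(g,f)\in U\}$ non-empty open. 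With $g\in T$ one then has $gKg\inv\subseteq T\cdot T^{10}\cdot T=T^{12}$, so $U_g\subseteq K\cdot gKg\inv\cdot K\subseteq T^{10}\cdot T^{12}\cdot T^{10}=T^{32}$ -- the two ``extra'' factors in $32=10+12+10$ are literally the $g$ and $g\inv$, not cancellation artifacts.

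Your proposed cancellation does not produce this control. Fixing $f_0\in O_2$ and writing $f_0=k_1(g_0u_1g_0\inv)k_2=k_3(g_0hu_2h\inv g_0\inv)k_4$ with $k_i,u_i\in K$, then dividing, leaves conjugations by $g_0$ and by $g_0h$ on both sides; nothing forces these conjugations to lie in any finite power of $T$ unless $g_0$ (and hence $g_0h$) is already known to lie in a bounded power of $T$. Similarly, forming $O_2\inv O_2$ only produces expressions of the form $Kg_1Kg_1\inv Kg_2Kg_2\inv K$, which again involve uncontrolled conjugates. The accounting ``$T^{30}$ plus two absorbed factors'' cannot be made rigorous without first placing $g$ inside $T$, which requires the non-meagreness/density argument you omitted.
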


\begin{proof}
Fix a covering ${\sf Aut}(\ku X)=\bigcup_{n=1}^\infty k_nW_nh_n$. By leaving out all terms so that $W_n$ is meagre, we can instead suppose that $\bigcup_{n=1}^\infty k_nW_nh_n$ is comeagre in ${\sf Aut}(\ku X)$ and that every $W_n$ is non-meagre. By Lemma \ref{lem:sabok} there is some tuple $\ov a$ and some number $n$ so that 
$$
{\sf Aut}(\ku X, \ov a)\subseteq \big(W_n\inv W_n\big)^{10}.
$$
Because $W_n$ is  non-meagre, it is somewhere dense and thus $W_n\inv W_n$ is dense in some identity neighbourhood. Thus, by enlarging $\ov a$ if necessary, we can suppose that for some $\eps>0$, $W_n\inv W_n$ is dense in $N(\ov a,\eps)$. Furthermore, without loss of generality, we can suppose that $\ov a\in \ku B$. 

By the assumptions in the theorem,  there is a non-empty open set $U \subseteq  N(\ov a,\eps)\times {\sf Aut}(\ku X)$ so that $O(\ov a/g\ov a)\cap \ku O(f\ov a/\ov a)\neq \tom$ for all $(g,f)\in U$. So, by the density of  $W_n\inv W_n$  in $N(\ov a,\eps)$, we can find $g\in N(\ov a,\eps)\cap W_n\inv W_n$, so that the section
$$
U_g=\{f \in {\sf Aut}(\ku X)\del (g,f)\in U\}
$$
is non-empty open. Observe however that, for  $f\in U_g$, there are 
$$
h\in {\sf Aut}(\ku X, g\ov a )=g\cdot {\sf Aut}(\ku X,\ov a)\cdot g\inv \subseteq \big(W_n\inv W_n\big)^{12}
$$
and 
$$
k\in {\sf Aut}(\ku X,\ov a)\subseteq \big(W_n\inv W_n\big)^{10}
$$
so that $h\ov a=kf\ov a$ and hence 
$$
f\in k\inv h\cdot {\sf Aut}(\ku X,\ov a)\subseteq \big(W_n\inv W_n\big)^{32}.
$$
In other words, $\tom\neq U_g\subseteq\big(W_n\inv W_n\big)^{32}$.
\end{proof}

\begin{cor}\label{cor:autconti}
Let $\ku X$ be a separable complete metric $L$-structure in a countable language $L\supseteq L_{\sf dist}$ and assume that $\ku X$ has the extension and Hrushovski properties. Fix also a basis $\ku B$ for  ${\sf Aut}(\ku X)$ and suppose also that, for all $\ov a\in \ku B$ and $\eps>0$, 
$$
{\sf int}\big\{ (g,f)\in N(\ov a,\eps)\times {\sf Aut}(\ku X)\del\ku O(\ov a/g\ov a)\cap \ku O(f\ov a/\ov a)\neq \tom\big\}\;\neq\;\tom.
$$
Then  ${\sf Aut}(\ku X)$ has the automatic continuity property.
\end{cor}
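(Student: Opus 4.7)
The plan is to deduce the corollary from Theorem \ref{thm:autconti} by the standard maneuver converting its combinatorial conclusion into continuity of an arbitrary homomorphism. Let $G={\sf Aut}(\ku X)$ and let $\pi\colon G\to H$ be a homomorphism into a Polish group $H$. Replacing $H$ with the closed, hence Polish, subgroup $\ov{\pi[G]}$, I may assume $\pi[G]$ is dense in $H$; by Corollary \ref{cor:closed graph}, continuity then follows from continuity at the identity.

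Given an identity neighbourhood $V\subseteq H$, I first choose a symmetric open identity neighbourhood $U\subseteq H$ with $U^{128}\subseteq V$. Separability of $H$ together with the density of $\pi[G]$ lets me pick a sequence $(g_n)$ in $G$ such that $\{\pi(g_n)\}$ is dense in $H$, and hence $H=\bigcup_n U\pi(g_n)$. Pulling back along $\pi$ yields a countable covering
\[
G \;=\; \bigcup_{n=1}^\infty W g_n,\qquad W:=\pi\inv(U),
\]
which is precisely of the form $G=\bigcup_n k_nW_nh_n$ with $k_n=1$, $W_n=W$, and $h_n=g_n$.

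I next apply Theorem \ref{thm:autconti} to this covering, which produces some $n$ (any will do, since $W_n=W$ for all $n$) with $(W\inv W)^{32}$ having nonempty interior in $G$. A standard argument then shows that any symmetric set $A\subseteq G$ with nonempty interior satisfies $A^2\supseteq U_0\inv U_0$ for some open identity neighbourhood $U_0$; applied to $A=(W\inv W)^{32}$, this gives that $(W\inv W)^{64}$ contains an identity neighbourhood of $G$. Finally, $\pi\big[(W\inv W)^{64}\big]\subseteq (U\inv U)^{64}=U^{128}\subseteq V$, so $\pi\inv(V)$ is an identity neighbourhood and $\pi$ is continuous at $1$.

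All the substantive work sits upstream in Theorem \ref{thm:autconti} (and, behind it, in Lemma \ref{lem:sabok} and the ample generics machinery of Theorem \ref{thm:ample gen}); the deduction sketched above is a routine power-counting passage from the combinatorial statement to its topological consequence, so I do not foresee any serious obstacle here beyond keeping track of the exponents ($32$, then doubled to $64$, which forces the choice $U^{128}\subseteq V$ at the outset).
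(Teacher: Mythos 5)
Your deduction is correct and is essentially the route the paper has in mind: pull back a countable cover of $H$ to a cover of $G={\sf Aut}(\ku X)$ by right translates of $W=\pi^{-1}(U)$, apply Theorem~\ref{thm:autconti} to get nonempty interior of $(W^{-1}W)^{32}=W^{64}$, double once to get an identity neighbourhood inside $(W^{-1}W)^{64}=W^{128}$, and push forward; this is just an inlined proof of the cited Steinhaus lemma from \cite{ros-sol}. One small note: the set you call $U_0$ is merely a nonempty open subset of $A$, not an identity neighbourhood --- it is $U_0^{-1}U_0$ that is the identity neighbourhood --- and your (correct) bookkeeping gives Steinhaus coefficient $128$ rather than the $64$ asserted in the remark following the corollary, where the final doubling from ``nonempty interior'' to ``identity neighbourhood'' appears to have been dropped; neither point affects the validity of the corollary itself.
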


Let us remark that the proof of Corollary \ref{cor:autconti} shows that ${\sf Aut}(\ku X)$  is actually a Steinhaus group with coefficient $k=64$.


\section{The isometry group of the Urysohn space}
The {\em Urysohn metric space} $\U$ first constructed by P. S. Urysohn in \cite{Urysohn1,Urysohn2} is a separable complete metric space satisfying the following {\em metric extension property}.
\begin{quote}
For any finite metric space  $X$, any subspace $Y\subseteq X$ and any isometric embedding 
$$
Y\overset\phi\longrightarrow \U,
$$ 
there exists an extension $\tilde \phi$ of $\phi$ to an isometric embedding 
$$
X\overset{\tilde\phi}\longrightarrow \U.
$$
\end{quote}
In fact, as shown by Urysohn, separability, completeness and the extension property completely determine $\U$ up to isometry and also imply that $\U$ is universal for all separable metric spaces, i.e., $\U$ contains an isometric copy of every separable metric space.

Consider now $\U$ as a complete metric structure in the language $L_{\sf dist}$ and note that its automorphism group is nothing but the group ${\sf Isom}(\U)$ of all isometries of $\U$ equipped with the topology of pointwise convergence on $\U$. Since finitely generated substructures of $\U$ are just finite metric spaces, the main result of \cite{solecki-isom} simply states that $\U$ has the Hrushovski property. Observe also that, if $X$, $Y$ and $Z$ are finite subspaces of $\U$ so that $X\subseteq Y\cap Z$ and so that
$$
d(y,z)=\min_{x\in X}d(y,x)+d(x,z)
$$
for all $y\in Y$ and $z\in Z$,  then $Y\forkindep_XZ$. A simple application of the metric extension property above thus implies that $\U$ has the extension property as a first-order structure.

We will now proceed to show that the automorphism group, i.e., ${\sf Isom}(\U)$ satisfies the criteria of Corollary \ref{cor:autconti} and thus has the automatic continuity property. For this, we first establish the following lemma. We note that \cite[Lemma 5.1]{Malicki} plays a similar role in \cite{Malicki}, but the proof provided there is not quite correct and needs an argument like ours below. The approach of \cite{sabok,Sabok-err} seems more involved.

\begin{lemme}\label{lem:metric}
Suppose $X=\{a_i,c_j,e_k\}_{i,j,k=1}^n$ is a set of cardinality $3n$ and that 
$$
d\colon X\times X\to [0,\infty[ 
$$
is a function so that the restriction of $d$ to each of the sets $\{a_i,c_j\}_{i,j}$ and  $\{a_i,e_j\}_{i,j}$ is a metric. Assume furthermore that there is some $\delta>0$ so that, for all $i,j,k$,
\begin{enumerate}
\item\label{2} $d(a_i,c_j)+d(c_j,a_k)>d(a_i,a_k)+\delta$,
\vspace{.1cm}
\item\label{3} $\delta>d(a_k,e_k)$,
\vspace{.1cm}
\item\label{4} $d(a_i,e_j)> d(a_i,a_j)$,
\vspace{.1cm}
\item\label{5} $d(a_i,a_j)=d(c_i,c_j)=d(e_i,e_j)$,
\vspace{.1cm}
\item\label{6} $d(c_j,a_k)=d(c_j,e_k)$.
\end{enumerate}
Then $d$ is a metric on $X$.
\end{lemme}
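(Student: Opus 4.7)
The plan is to verify the metric axioms one at a time, treating $d$ as symmetric throughout (symmetry is implicit in calling the restrictions metrics, and on the remaining pairs $\{c_i, e_j\}$ it is encoded by (6) together with the symmetry of the $\{a_i, c_j\}$-metric). The coincidence axiom $d(x,y) = 0 \iff x = y$ follows immediately from this: (5) and (6) force $d(e_i, e_j) = d(a_i, a_j)$ and $d(c_i, e_j) = d(c_i, a_j)$, which are positive whenever the arguments differ. The real content is the triangle inequality.

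Since the hypothesis already handles all triangles lying in $\{a_i, c_j\}_{i,j}$ or in $\{a_i, e_j\}_{i,j}$, and (5) reduces any triangle $\{e_i, e_j, e_k\}$ to $\{a_i, a_j, a_k\}$, only triangles meeting both the $c$-family and the $e$-family remain. These fall into three shapes: (A) $\{a_i, c_j, e_k\}$, (B) $\{c_i, c_j, e_k\}$, and (C) $\{c_i, e_j, e_k\}$. I expect cases (B) and (C) to be routine: applying (5) and (6) to every $c$-$e$ or $e$-$e$ distance that appears, each of the three triangle inequalities in these cases rewrites as an inequality among tuples in $\{a_i, c_j\}_{i,j}$ which is already given by hypothesis. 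For example, $d(c_i, c_j) + d(c_j, e_k) \geq d(c_i, e_k)$ becomes $d(c_i, c_j) + d(c_j, a_k) \geq d(c_i, a_k)$, and $d(c_i, e_j) + d(e_j, e_k) \geq d(c_i, e_k)$ becomes $d(c_i, a_j) + d(a_j, a_k) \geq d(c_i, a_k)$.

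The substantive case is (A), and this is where the strict gap conditions (1), (3) and (4) enter. Rewriting $d(c_j, e_k) = d(c_j, a_k)$ via (6), the three triangle inequalities become: first, $d(a_i, c_j) + d(c_j, a_k) \geq d(a_i, e_k)$, which I will prove using (1) to lower bound the left side by $d(a_i, a_k) + \delta$, and (3) together with the $\{a_i, e_j\}$-triangle inequality to give $d(a_i, e_k) \leq d(a_i, a_k) + d(a_k, e_k) < d(a_i, a_k) + \delta$; second, $d(a_i, c_j) + d(a_i, e_k) \geq d(c_j, a_k)$, for which (4) yields $d(a_i, e_k) > d(a_i, a_k)$ and then the $\{a_i, c_j\}$-triangle gives $d(a_i, c_j) + d(a_i, a_k) \geq d(c_j, a_k)$; and third, $d(c_j, a_k) + d(a_i, e_k) \geq d(a_i, c_j)$, argued by the symmetric use of (4) and the $\{a_i, c_j\}$-triangle.

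The main obstacle I anticipate is the first inequality in (A); indeed, this is the whole purpose of the gap parameter $\delta$. Conditions (1) and (3) are calibrated precisely so that the $c_j$-detour exceeds the $a_i$-to-$a_k$ segment by strictly more than the $a_k$-to-$e_k$ perturbation controlled via the $\{a_i, e_j\}$-metric. Once this single calibrated inequality is in hand, the remaining verifications for (A) have slack to spare and the argument closes.
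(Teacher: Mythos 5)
Your proposal is correct and follows essentially the same route as the paper: handle the all-$c$/$e$ triangles by transporting them to $\{a_i,c_j\}$ via (5)–(6) (the paper phrases this as one observation that the restriction to $\{e_i,c_j\}$ is a metric, you spell out the cases), and then verify the three mixed triangle inequalities for $\{a_i,c_j,e_k\}$ using (1)–(4) exactly as the paper does. No gaps.
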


\begin{proof}
Since the restriction of $d$ to $\{a_i,c_j\}_{i,j}$ is a metric, it follows from \eqref{5} and \eqref{6} that also the restriction  to $\{e_i,c_j\}_{i,j}$ is a metric. It follows from this that it only remains to verify the triangle inequality for triples consisting of one point from each of the sets $\{a_i\}_{i}$, $\{c_i\}_{i}$ and $\{e_i\}_{i}$. This task splits into three cases according to whether the middle point belongs to the first, second or third set.

Let $i,i,k$ be arbitrary. Then
\maths{
d(a_i,c_j)+d(c_j,e_k)
&\stackrel{\eqref{6}}{=} d(a_i,c_j)+d(c_j,a_k)\\
&\stackrel{\eqref{2}}{>} d(a_i,a_k)+\delta\\
&\stackrel{\eqref{3}}{>} d(a_i,e_k),\\
}
\maths{
d(a_i,e_j)+d(e_j,c_k)
&\stackrel{\eqref{6}}{=} d(a_i,e_j)+d(a_j,c_k)\\
&\stackrel{\eqref{4}}{>} d(a_i,a_j)+d(a_j,c_k)\\
&\geqslant d(a_i,c_k),\\
}
and
\maths{
d(e_i,a_j)+d(a_j,c_k)
&\stackrel{\eqref{6}}{=} d(e_i,a_j)+d(e_j,c_k)\\
&\stackrel{\eqref{4}}{>} d(a_i,a_j)+d(e_j,c_k)\\
&\stackrel{\eqref{5}}{=}  d(e_i,e_j)+d(e_j,c_k)\\
&\geqslant d(e_i,c_k)\\
}
which confirms these three triangle inequalities.
\end{proof}

\begin{thm}\cite{sabok} The group ${\sf Isom}(\U)$ has the automatic continuity property.
\end{thm}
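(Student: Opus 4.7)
The plan is to verify the orbit-intersection criterion of Corollary \ref{cor:autconti} for ${\sf Isom}(\U)$, taking as basis $\ku B$ the collection of all finite tuples of pairwise distinct points in $\U$. Having already noted that $\U$ has the Hrushovski and extension properties, this is all that remains. By ultrahomogeneity of $\U$, the orbit of a finite tuple is determined by its mutual distance profile, so a tuple $\ov d$ lies in $\ku O(\ov a/g\ov a)\cap \ku O(f\ov a/\ov a)$ precisely when
\[
d(d_i,d_j)=d(a_i,a_j),\quad d(d_i, ga_j)=d(a_i, ga_j),\quad d(d_i,a_j)=d(fa_i,a_j) \quad \text{for all } i,j.
\]
By the metric extension property of $\U$, such a $\ov d$ exists as soon as these prescribed distances, together with the already-known distances on $\ov a\cup g\ov a$, yield a bona fide metric on $\ov a\cup g\ov a\cup \ov d$. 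Verifying that this is a metric is exactly what Lemma \ref{lem:metric} does, under the identification $(a_i, c_j, e_k) \longleftrightarrow (a_i, ga_j, d_k)$, with conditions (4) and (5) of the lemma satisfied automatically from isometry and the defining equations.

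Given $\ov a \in \ku B$ and $\eps>0$, I fix parameters $0<\mu<\delta<2\eta<2\eps$ and invoke the extension property together with ultrahomogeneity of $\U$ to produce $g_0, f_0\in {\sf Isom}(\U)$ realizing
\[
d(g_0 a_i, a_i)=\eta,\quad d(g_0 a_i, a_j)=d(a_i,a_j)+\eta \qquad (i\neq j),
\]
\[
d(f_0 a_i, a_i)=\mu,\quad d(f_0 a_i, a_j)=d(a_i,a_j)+\mu \qquad (i\neq j).
\]
A routine check of the triangle inequalities shows each of the finite metrics on $\ov a\cup g_0\ov a$ and $\ov a\cup f_0\ov a$ is consistent, and since $\eta<\eps$ we have $g_0\in N(\ov a,\eps)$. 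Condition (1) of Lemma \ref{lem:metric} then reads $d(a_i, g_0 a_j)+d(g_0 a_j,a_k) > d(a_i,a_k)+\delta$, which is strict in every case: the generic case gives $\text{LHS}\geqslant d(a_i,a_k)+2\eta>d(a_i,a_k)+\delta$, while the degenerate case $i=k\neq j$ uses $d(a_i,a_j)>0$. Condition (2) becomes $\delta>\mu$ and condition (3) becomes $d(a_i,f_0 a_j)>d(a_i,a_j)$, both strict.

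Since all three relevant inequalities are strict and depend continuously on $(g,f)$ through the evaluation maps $(g,f)\mapsto (ga_i, fa_j)$, they persist on a small open neighborhood $U\times V$ of $(g_0, f_0)$ inside $N(\ov a,\eps)\times {\sf Isom}(\U)$. For every $(g,f)\in U\times V$, Lemma \ref{lem:metric} supplies a valid metric on $\ov a\cup g\ov a\cup \ov d$, which by the metric extension property of $\U$ is realized by an actual tuple $\ov d$ in $\U$. This exhibits a non-empty open subset on which the orbit intersection is non-empty, verifying the hypothesis of Corollary \ref{cor:autconti} and yielding the automatic continuity property for ${\sf Isom}(\U)$.

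The main obstacle is reconciling the requirement $g\in N(\ov a,\eps)$ (forcing $g\ov a$ to be close to $\ov a$) with condition (1) of Lemma \ref{lem:metric}, which demands that $g\ov a$ lie ``off the geodesics'' between points of $\ov a$ in a quantitative sense. The trick is that the Urysohn space is flexible enough to accommodate a $g_0$ that, while moving each $a_i$ only by the small amount $\eta$, simultaneously pushes $g_0 a_j$ strictly \emph{away} from every other $a_i$ by exactly $\eta$; choosing $\delta<2\eta$ then converts this excess precisely into the strict inequality demanded by the lemma.
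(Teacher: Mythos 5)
Your argument is correct and follows essentially the same route as the paper: you verify the hypothesis of Corollary \ref{cor:autconti} via Lemma \ref{lem:metric}, using the metric extension property and ultrahomogeneity of $\U$ to produce both the witnessing pair $(g_0,f_0)$ and the tuple $\ov d$ realising the prescribed distances. The only cosmetic differences are that you exhibit a concrete $(g_0,f_0)$ (pushing each $g_0a_i$ uniformly off the geodesics by a fixed $\eta$ and then shrinking to an open neighbourhood) where the paper defines the open set directly by the three strict inequalities and checks nonemptiness afterwards, and that you phrase the orbit condition as $d(d_i,a_j)=d(fa_i,a_j)$ rather than the paper's $d(a_i,e_j)=d(a_i,fa_j)$, which is the same constraint up to relabelling indices.
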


\begin{proof}
Observe that the collection $\ku B$ of all finite tuples $\ov a=(a_1,\ldots, a_n)$ of distinct points $a_i\in \U$ is a basis for ${\sf Isom}(\U)$. So suppose $\ov a\in \ku B$ and $\eps>0$ are given and let $U$ denote the set of all $(g,f)\in N(\ov a,\eps)\times{\sf Isom}(\U)$ so that, for some $\delta>0$ and all $i,j,k$, we have
\begin{enumerate}
\item[(i)] $d(a_i,ga_j)+d(ga_j,a_k)>d(a_i,a_k)+\delta$,
\vspace{.1cm}
\item[(ii)] $\delta>d(a_k,fa_k)$,
\vspace{.1cm}
\item[(iii)] $d(a_i,fa_j)> d(a_i,a_j)$.
\end{enumerate}
Observe that, by the metric extension property, there is some $g\in N(\ov a,\eps)$ so that $$
d(a_i,ga_j)+d(ga_j,a_k)>d(a_i,a_k)
$$ for all $i,j,k$. Choosing $\delta>0$ small enough, 
we may ensure that also (ii) holds. Picking then $f\in N(\ov a,\delta)$ so that (iii) is satisfied, we finally have $(g,f)\in U$, showing that $U$ is non-empty and evidently also open.  Furthermore, suppose that $(g,f)\in U$ and write $c_i=ga_i$. Then by Lemma \ref{lem:metric} the following conditions
\begin{enumerate}
\item\label{a} $d(a_i,c_j)+d(c_j,a_k)>d(a_i,a_k)+\delta$,
\vspace{.1cm}
\item\label{b} $\delta>d(a_k,fa_k)=d(a_k,e_k)$,
\vspace{.1cm}
\item\label{c} $d(a_i,e_j)=d(a_i,fa_j)> d(a_i,a_j)$,
\vspace{.1cm}
\item\label{d} $d(a_i,a_j)=d(c_i,c_j)=d(e_i,e_j)$,
\vspace{.1cm}
\item\label{e} $d(c_j,a_k)=d(c_j,e_k)$
\end{enumerate}
determine a metric on the set $\{a_i,c_j,e_k\}_{i,j,k=1}^n$ that agrees with the metric from $\U$ on the subset $\{a_i,c_j\}_{i,j=1}^n$. It thus follows from the metric extension property, that we can identify the points $e_i$ with actual points in $\U$. Thus, by ultrahomogeneity of $\U$ and condition \eqref{e}, we have
$$
\ov e\in \ku O(\ov a/\ov c)=\ku O(\ov a/g\ov a),
$$
whereas, from ultrahomogeneity and condition \eqref{c}, we have 
$$
\ov e\in \ku O(f\ov a/\ov a).
$$
In particular, $\ku O(\ov a/g\ov a)\cap \ku O(f\ov a/\ov a)\neq \tom$. This shows that ${\sf Isom}(\U)$ satisfies the criteria of Corollary \ref{cor:autconti} and therefore has the automatic continuity property
\end{proof}


\section{The automorphism group of the measure algebra}\label{sec:meas}

Let $(X, \Sigma ,\mu)$ be a standard probability space, e.g., $X=[0,1]$ with its Borel $\sigma$-algebra $\Sigma$ and Lebesgue measure $\mu$.  Let also ${\sf{Meas}}_\mu$ be the quotient of the $\sigma$-algebra $\Sigma$ by the ideal of null sets. We observe that ${\sf{Meas}}_\mu$ becomes a complete metric space under the metric 
$$
d\big([A],[B]\big)=\mu(A\triangle B),
$$
where $[A]$ denotes the equivalence class of an element $A\in \Sigma$. 
Observe also that the Boolean operations $\vee$, $\wedge$ and $\neg$ are continuous functions on the metric space ${\sf{Meas}}_\mu$ and hence the latter can be seen as a separable complete metric Boolean algebra structure. In particular, the automorphism group $\sf{Aut}(\sf{Meas}_\mu)$ is Polish in the topology of pointwise convergence with respect to the metric $d$.

\begin{lemme}\cite{sabok}
The measure algebra ${\sf Meas}_\mu$ has the Hrushovski property. 
\end{lemme}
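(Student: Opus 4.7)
The plan is to identify finitely generated substructures of ${\sf Meas}_\mu$ with finite Boolean subalgebras, reduce the Hrushovski property to a multiset-matching condition about a refinement of such an algebra, and build the refinement using atomlessness of $(X,\mu)$.

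First, a finitely generated substructure of ${\sf Meas}_\mu$ is exactly a finite Boolean subalgebra, which is uniquely determined by its finite set of atoms (a measurable partition of $X$ modulo null sets); an isomorphism between two such subalgebras is therefore just a measure-preserving bijection between their atom sets. Given isomorphisms $\phi_i\colon\ku A_i\to\ku B_i$ for $i=1,\ldots,n$, let $\ku A$ be the finite Boolean subalgebra generated by $\bigcup_i(\ku A_i\cup\ku B_i)$, with atoms $a_1,\ldots,a_K$ of measures $\mu_k=\mu(a_k)$; for every $i$ and every atom $p$ of $\ku A_i$ write $p=\bigsqcup_{k\in I^i_p}a_k$ and $\phi_i(p)=\bigsqcup_{l\in J^i_p}a_l$, so that $\sum_{k\in I^i_p}\mu_k=\sum_{l\in J^i_p}\mu_l$. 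The goal is then to find a finite refinement $\ku D\supseteq\ku A$ such that for every $(i,p)$ the multiset of measures of $\ku D$-atoms contained in $p$ equals that of $\ku D$-atoms contained in $\phi_i(p)$; for if so, each $\phi_i$ extends to an automorphism $f_i\in{\sf Aut}({\sf Meas}_\mu)$ by selecting, on every atom $p$ of $\ku A_i$, a measure-preserving bijection between $\ku D$-atoms in $p$ and $\ku D$-atoms in $\phi_i(p)$ (which exists by atomlessness) and gluing over all atoms of $\ku A_i$.

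To build such a $\ku D$ I would exploit atomlessness by realizing each $a_k$ as a product $\{k\}\times[0,1]^{2n}$ with measure $\mu_k\cdot\lambda^{2n}$, and on the $(2i-1)$-st ``in'' coordinate placing the partition of $[0,1]$ into intervals of lengths $\mu_l/\mu(p^i_k)$ indexed by $l\in J^i_{p^i_k}$, where $p^i_k$ denotes the unique atom of $\ku A_i$ containing $a_k$; on the $2i$-th ``out'' coordinate placing the partition indexed by $I^i_{\bar{p}^i_k}$ of lengths $\mu_{k'}/\mu(\bar{p}^i_k)$, where $\bar{p}^i_k$ is the unique atom of $\ku A_i$ whose $\phi_i$-image contains $a_k$. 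Let $\ku D$ be the finite Boolean subalgebra generated by $\ku A$ together with these $2n$ cylindrical partitions on each $a_k$. Each $\phi_i$ then admits a natural extension: on every atom $p$ of $\ku A_i$, and for each $k\in I^i_p$ and $l\in J^i_p$, send the slice of $a_k$ in which the $i$-th ``in'' coordinate lies in the $l$-interval onto the slice of $a_l$ in which the $i$-th ``out'' coordinate lies in the $k$-interval, via the measure-preserving map that swaps the $i$-th ``in'' and ``out'' coordinates (up to the appropriate affine rescaling) and fixes all other coordinates.

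The main obstacle is to verify that this coordinate-swap map is genuinely measure-preserving and that it permutes the atoms of $\ku D$. The measure check reduces to the transportation identity $\mu_k\cdot(\mu_l/\mu(p))=\mu_l\cdot(\mu_k/\mu(p))$, so that the Jacobian of the swap is compensated exactly by the different scalar measures $\mu_k$ and $\mu_l$ on the two product representations; the $\ku D$-preservation check uses the crucial observation that $\bar{p}^i_l=p$ whenever $l\in J^i_p$, so that the ``out'' partition of $a_l$ is indexed precisely by $I^i_p$, which matches the range of $k$ in the construction (and symmetrically for the ``in'' side inside $a_k$). Carrying out this verification for every $(i,p)$ produces a finite $\ku D\supseteq\ku A$ and the desired automorphisms $f_i$, establishing the Hrushovski property for ${\sf Meas}_\mu$.
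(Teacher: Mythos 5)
Your first paragraph is a sound and useful reduction: finitely generated substructures of ${\sf Meas}_\mu$ are exactly the finite Boolean subalgebras, isomorphisms between them are measure-preserving bijections of atom sets, and it does suffice to build a common refinement $\ku D\supseteq \ku A$ such that, for each $i$ and each atom $p$ of $\ku A_i$, the multiset of measures of $\ku D$-atoms below $p$ equals that below $\phi_i(p)$; atomlessness then lets you patch $f_i$ together block by block. This is close in spirit to the paper's ``similarly partitioned'' device, though the paper deliberately works with a stronger invariant (\emph{any} two disjoint elements of equal measure are similarly partitioned by the refinement) because that is what makes the recursive construction close up.

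The gap is in the construction of $\ku D$ and the verification. Your observation that $\bar p^i_l=p$ for $l\in J^i_p$ (and $p^i_k=p$ for $k\in I^i_p$) does ensure that the active in-coordinate of $a_k$ is partitioned by $J^i_p$ and the active out-coordinate of $a_l$ by $I^i_p$, so those two partitions are set up to correspond under the swap. But the swap also matches the out-coordinate of $a_k$ with the in-coordinate of $a_l$, and those are partitioned by $I^i_{\bar p^i_k}$ and $J^i_{p^i_l}$ respectively---indexed by two \emph{unrelated} atoms of $\ku A_i$, with no reason to carry the same interval lengths. Worse, for every $i'\neq i$ the swap is the identity on the $(2i'-1,2i')$ coordinates, yet the interval partitions placed there on $a_k$ and on $a_l$ depend on $(p^{i'}_k,\bar p^{i'}_k)$ versus $(p^{i'}_l,\bar p^{i'}_l)$, which again need not match. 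So the swap map does not permute $\ku D$-atoms, and one cannot conclude the measure-multiset condition from it.

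In fact the measure-multiset condition genuinely fails for this $\ku D$. Take $n=2$, $\ku A_1=\ku B_1$ with atoms $p_1,p_2$ of measure $1/2$, $\phi_1$ swapping $p_1\leftrightarrow p_2$; $\ku A_2=\ku B_2$ with atoms $r_1,r_2,r_3$ of measures $1/2,1/4,1/4$, $\phi_2$ fixing $r_1$ and swapping $r_2\leftrightarrow r_3$; and $\ku A$ with atoms $a_{mj}=p_m\wedge r_j$ of measures $\mu(a_{11},\dots,a_{23})=(0.2,\,0.1,\,0.2,\,0.3,\,0.15,\,0.05)$. Working with $i=2$, $p=r_2$, the largest $\ku D$-atom below $r_2$ sits inside $a_{12}$ and has measure $0.1\cdot 0.6^2\cdot 0.8^2=0.02304$, whereas the largest $\ku D$-atom below $\phi_2(r_2)=r_3$ sits inside $a_{13}$ and has measure $0.2\cdot 0.6^2\cdot 0.6^2=0.02592$. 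Hence the two multisets disagree and your $\ku D$ does not admit the required measure-preserving bijections. The paper avoids this by processing one pair $(a,b)$ of disjoint equal-measure elements at a time, subdividing each atom $x_i$ below $a$ into pieces of measure $\mu(x_i)\mu(y^j)/R$ (and dually for $y^j$ below $b$) so that the ``good'' invariant---equal-measure atoms of the base are similarly partitioned---is preserved at each step; this bilinear subdivision is the key idea missing from your cylinder construction, which only synchronizes the active $i$-th coordinate pair and leaves the remaining coordinates uncoordinated across different atoms of $\ku A$.
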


\begin{proof}
Suppose $\ku A$ is a finite subalgebra of ${\sf Meas}_\mu$ and $a,b\in \ku A$. We say that $a$ and $b$ are {\em similarly partitioned by $\ku A$} provided that we can write their partitions into atoms of $\ku A$ as
$$
a=a_1\vee \cdots \vee a_n \qquad\text{and}\qquad b=b_1\vee\cdots\vee b_n
$$
so that $\mu(a_i)=\mu(b_i)$ for all $i$. In particular, $\mu(a)=\sum_i\mu(a_i)=\sum_i\mu(b_i)=\mu(b)$. 
Also, a pair  $\ku A\subseteq \ku B$ of finite subalgebras is {\em good} if any two atoms $a,b$ of $\ku A$ with $\mu(a)=\mu(b)$ are similarly partitioned by $\ku B$. Observe that, if $a,b\in \ku A$ are similarly partitioned by $\ku A$ and $\ku A\subseteq \ku B$ is good, then $a,b$ are also similarly partitioned by $\ku B$.

\begin{claim}
Suppose $\ku A_0\subseteq {\sf Meas}_\mu$ is a finite subalgebra. Then there is a finite algebra $\ku A_0\subseteq \ku B\subseteq {\sf Meas}_\mu$ so that any two disjoint elements $a,b\in \ku A_0$ with $\mu(a)=\mu(b)$ are similarly partitioned by $\ku B$.
\end{claim}
To see this, list all pairs of disjoint elements of $\ku A_0$ with equal measure as $(a_1,b^1),\ldots,(a_p,b^p)$. By induction on $r\leqslant p$, we build finite algebras $\ku A_r\subseteq {\sf Meas}_\mu$ so that
\begin{enumerate}
\item[(i)] $\ku A_0\subseteq \ldots\subseteq \ku A_p$,
\item[(ii)] each pair $\ku A_{r-1}\subseteq \ku A_{r}$ is good,
\item[(iii)] $a_r$ and $b^r$ are similarly partitioned by $\ku A_r$.
\end{enumerate}
Assuming this can be done, we may then simply let $\ku B=\ku A_p$. 

For the construction, assume that $\ku A_{r-1}$ is given and list without repetition the atoms of $\ku A_{r-1}$  as
$$
x_1,\ldots,  x_n, y^1,\ldots,y^m, z_1,\ldots, z_l
$$
in such a way that, for some $0\leqslant k\leqslant \tfrac l2$, we have
$$
a_r=x_1\vee \cdots \vee x_n \vee z_1\vee \cdots\vee z_k\quad\text{and}\quad b^r=y_1\vee \cdots \vee y_m \vee z_{k+1}\vee \cdots\vee z_{2k},
$$
$\mu(z_i)=\mu(z_{k+i})$ for all $i\leqslant k$ and $\mu(x_i)\neq \mu(y^j)$ for all $i,j$.
Set
$$
x=x_{1}\vee \cdots \vee x_n \qquad\text{and}\qquad y=y^{1}\vee\cdots\vee y^m
$$
and let $R=\mu(x)=\mu(y)$.  For all $i,j$, we choose partitions
\begin{equation}\label{xy partition}
x_i=x_i^{1}\vee \cdots \vee x_i^m
\qquad\text{and}\qquad 
y^j=y^j_{1}\vee \cdots\vee y^j_n
\end{equation}
so that
$$
\mu(x_i^j)=\tfrac{\mu(y^j)}{\mu(y)}\mu(x_i)=\frac{\mu(x_i)\mu(y^j)}{R}=\tfrac{\mu(x_i)}{\mu(x)}\mu(y^j)=\mu(y_i^j).
$$
Also, if $e\in \{z_1,\ldots, z_l\}$ satisfies $\mu(e)=\mu(x_i)$ for some $i$, we choose a partition
\begin{equation}\label{x partition}
e=e^{1}\vee \cdots\vee e^m
\end{equation}
with $\mu(e^j)=\frac{\mu(x_i)\mu(y^j)}{R}$. Similarly, if $e\in \{z_1,\ldots, z_l\}$ satisfies $\mu(e)=\mu(y^j)$ for some $j$, we choose a partition
\begin{equation}\label{y partition}
e=e_{1}\vee \cdots\vee e_n
\end{equation}
with $\mu(e_i)=\frac{\mu(x_i)\mu(y^j)}{R}$. Note that, because $\mu(x_i)\neq \mu(y^j)$, only one of these can happen.
Finally, we may now let $\ku A_r$ be the algebra generated by $\ku A_{r-1}$ along with the new partitioning elements from \eqref{xy partition}, \eqref{x partition} and \eqref{y partition}. Then $\ku A_{r-1}\subseteq \ku A_{r}$ is good and $a_r, b^r$ are similarly partitioned by $\ku A_r$. 

Using the claim, we can now prove the Hrushovski property. Indeed, assume that we are given a finite collection $\{\phi_i\}_{i\in I}$ of isomorphisms 
$$
\ku C_i\maps{\phi_i}\ku D_i
$$
between finitely generated (and thus finite) subalgebras $\ku C_i,\ku D_i\subseteq {\sf Meas}_\mu$. 
Let $\ku A_0$ be the finite algebra generated by all of the $\ku C_i$ and $\ku D_i$ and choose $\ku B$ as in the claim. Then we may extend each $\phi_i$ to an automorphism $\hat\phi_i$ of $\ku B$. Indeed, if $c$ is an atom of $\ku C_i$, then $d=\phi_i(c)$ is an atom of $\ku D_i$. Then $c\wedge \neg d$ and $d\wedge \neg c$ are two disjoint elements of $\ku A_0$ of the same measure and hence are similarly partitioned
$$
c\wedge \neg d=x_1\vee \cdots \vee x_n \qquad\text{and}\qquad d\wedge \neg c=y_1\vee\cdots\vee y_n
$$
by $\ku B$. We may thus define $\hat\phi_i(x_j)=y_j$ for all $j$ and let $\hat\phi_i(x)=x$ for any atom $x$ of $\ku B$ with $x\leqslant c\wedge d$. Doing this for all atoms $c$ of $\ku C_i$ gives us the promised extension $\hat\phi_i$. To further extend $\hat\phi_i$ to an automorphism of ${\sf Meas}_\mu$, for any atom $b$ of $\ku B$, we extend $\hat\phi_i$ arbitrarily to an isomorphism between the measured algebras of ${\sf Meas}_\mu$ beneath the elements $b$ and $\hat\phi_i(b)$.
\end{proof}

\begin{lemme}\label{lem:measurealgebra}
Suppose $a_1\vee \cdots\vee a_n$, $b_1\vee \cdots\vee b_n$ and $c_1\vee \cdots\vee c_m$ are three partitions of the unit in ${\sf Meas}_\mu$ so that
$$
\mu(a_i)=\mu(b_i) \qquad\text{and}\qquad \mu(a_i\wedge\neg b_i)\leqslant \mu(a_i\wedge c_1)
$$
for all $i$.
Then there is a partition of the unit $e_1\vee \cdots\vee e_n$ so that, for all $i, j$, 
\begin{enumerate}
\item $\mu(e_i)=\mu(a_i)=\mu(b_i)$,
\item $\mu(e_i\wedge a_j)=\mu(b_i\wedge a_j)$,
\item $\mu(e_i\wedge c_j)= \mu(a_i\wedge c_j)$.
\end{enumerate}
\end{lemme}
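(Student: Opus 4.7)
The plan is to construct the $e_i$ by modifying $a_i$ only inside the set $c_1$. For each $k \neq 1$ I set the $c_k$-component of $e_i$ to be exactly $a_i \wedge c_k$, which automatically delivers condition (3) for $k \neq 1$ and reduces the whole problem to constructing a partition $E_1, \ldots, E_n$ of $c_1$, where $E_i := e_i \wedge c_1$, having the appropriate intersection measures with the $a_j$.

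Next I would compute what condition (2) demands of these $E_i$. Writing $e_i \wedge a_j = (E_i \wedge a_j) \vee \bigvee_{k \neq 1}(a_i \wedge a_j \wedge c_k)$ and using that $a_i \wedge a_j = 0$ for $i \neq j$, a short calculation turns the requirement $\mu(e_i \wedge a_j) = \mu(b_i \wedge a_j)$ into $\mu(E_i \wedge a_j) = \mu(b_i \wedge a_j)$ when $i \neq j$, and $\mu(E_i \wedge a_i) = \mu(a_i \wedge c_1) - \mu(a_i \wedge \neg b_i)$ when $i = j$. The hypothesis $\mu(a_i \wedge \neg b_i) \leq \mu(a_i \wedge c_1)$ is exactly what guarantees that the latter value is nonnegative, and this is the only place the assumption gets used.

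A routine double-sum check then verifies that these prescribed values, summed over $i$ for fixed $j$, give $\mu(a_j \wedge c_1)$, and summed over $j$ for fixed $i$, give $\mu(a_i \wedge c_1)$ (using $\sum_i \mu(b_i \wedge a_j) = \mu(a_j) = \mu(b_j)$ together with the identity $\mu(a_i \wedge \neg b_i) = \mu(b_i \wedge \neg a_i)$). Since the measure algebra of a standard probability space is atomless, I can then partition each $a_j \wedge c_1$ into $n$ disjoint pieces $E_1^j, \ldots, E_n^j$ realising the prescribed measures, and set $E_i := \bigvee_j E_i^j$ and $e_i := E_i \vee \bigvee_{k \neq 1}(a_i \wedge c_k)$. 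Condition (3) then holds trivially for $k \neq 1$ and for $k = 1$ because $\mu(E_i) = \mu(a_i \wedge c_1)$ by the column-sum; condition (2) holds by design; and condition (1) follows. The only real obstacle is the nonnegativity check in the $i = j$ case, which is exactly the hypothesis of the lemma; the remainder is arithmetic bookkeeping and atomlessness.
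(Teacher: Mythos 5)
Your construction is correct and is essentially the same as the paper's: the paper likewise keeps $a_i \wedge \neg c_1$ intact, partitions each $a_j \wedge c_1$ into pieces $d_{j1},\ldots,d_{jn}$ with $\mu(d_{ji}) = \mu(a_j \wedge b_i)$ for $i \neq j$ (so $d_{jj}$ carries the leftover mass $\mu(a_j\wedge c_1) - \mu(a_j\wedge\neg b_j)$), and sets $e_i = (a_i\wedge\neg c_1)\vee\bigvee_j d_{ji}$. Your $E_i^j$ is the paper's $d_{ji}$, your double-sum check mirrors the paper's verification of (1)--(3), and your observation about where the hypothesis $\mu(a_i\wedge\neg b_i)\leqslant\mu(a_i\wedge c_1)$ is used (nonnegativity of the $i=j$ piece) is exactly right.
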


\begin{proof}
For a given $i$, note that 
$$
\sum_{j\neq i}\mu(a_i\wedge b_j)= \mu(a_i\wedge\neg b_i)\leqslant \mu(a_i\wedge c_1),
$$
whereby we can find a partition 
$$
a_i\wedge c_1 = d_{i1}\vee \cdots\vee d_{in}
$$
satisfying
$$
\mu(d_{ij})=\mu(a_i\wedge b_j)
$$
for all $j\neq i$. We then set
$$
e_i= (a_i\wedge \neg c_1)\vee \bigvee_{ j}d_{ji}.
$$

Observe first that, as $d_{ji}\leqslant a_j\wedge c_1$, we have
$$
e_i\wedge a_i=(a_i\wedge \neg c_1)\vee d_{ii},
$$
whereas, for $j\neq i$,
$$
e_i\wedge a_j=d_{ji}.
$$
So 
\maths{
\mu(e_i\wedge a_i)
&=\mu (a_i\wedge \neg c_1)+\mu(d_{ii})\\
&=\mu (a_i\wedge \neg c_1)+\mu (a_i\wedge c_1)-\sum_{j\neq i}\mu (d_{ij})\\
&=\mu (a_i)-\sum_{j\neq i}\mu (a_i\wedge b_j)\\
&=\mu (a_i)-\mu (a_i\wedge \neg b_i)\\
&=\mu (b_i\wedge a_i)
}
and, for $j\neq i$,
\maths{
\mu(e_i\wedge a_j)
&=\mu (d_{ji})=\mu (b_i\wedge a_j).
}
Thus,
$$
\mu(e_i)=\sum_j\mu(e_i\wedge a_j)=\sum_j\mu(b_i\wedge a_j)=\mu(b_i)=\mu(a_i).
$$
Also, for $j\neq 1$, $e_i\wedge c_j=a_i\wedge c_j$ and so
\maths{
\mu(e_i\wedge c_j)
&=\mu(a_i\wedge c_j),
}
whereas
\maths{
\mu(e_i\wedge c_1)
&=\mu(e_i)-\sum_{j\neq 1}\mu(e_i\wedge c_j)\\
&=\mu(a_i)-\sum_{j\neq 1}\mu(a_i\wedge c_j)\\
&=\mu(a_i\wedge c_1).
}
\end{proof}

\begin{thm}\cite{BBM} The group ${\sf Aut}({\sf Meas}_\mu)$ has the automatic continuity property.    
\end{thm}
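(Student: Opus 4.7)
The plan is to apply Corollary \ref{cor:autconti} to $\ku X = {\sf Meas}_\mu$, viewed as a separable complete metric structure in the countable language consisting of $L_{\sf dist}$ together with the Boolean operations. Since the Hrushovski property is the content of the preceding lemma, what remains is to establish the extension property and the open-interior criterion of the corollary. As basis $\ku B$ I would take the collection of finite tuples $\ov a = (a_1, \dots, a_n)$ whose entries form a partition of the unit into elements of positive measure; this is a basis since, for any given tuple in ${\sf Meas}_\mu$, the atoms of the finite Boolean subalgebra it generates form such a partition whose pointwise stabilizer is contained in that of the original tuple.

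For the extension property, given finite subalgebras $\ku A \subseteq \ku B \cap \ku C$ of ${\sf Meas}_\mu$, I would produce $g \in {\sf Aut}({\sf Meas}_\mu)$ fixing $\ku A$ pointwise so that $\ku B$ and $g[\ku C]$ are stochastically independent relative to $\ku A$, meaning
$$
\mu(b \wedge c') \;=\; \frac{\mu(b)\,\mu(c')}{\mu(\alpha)}
$$
for every atom $\alpha$ of $\ku A$ and all atoms $b \leqslant \alpha$ of $\ku B$ and $c' \leqslant \alpha$ of $g[\ku C]$. Such a $g$ is built atom-by-atom of $\ku A$: within each $\alpha$, the non-atomicity of ${\sf Meas}_\mu\restriction\alpha$ allows one to rearrange the atoms of $\ku C\restriction\alpha$ so that each meets each atom of $\ku B\restriction\alpha$ with exactly the product measure. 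Stochastic independence then yields the required abstract independence $\ku B \forkindep_{\ku A} g[\ku C]$: the atoms of the algebra generated by $\ku B \cup g[\ku C]$ are precisely the products $b \wedge c'$, and any automorphisms $\phi, \psi$ of $\ku B$ and $g[\ku C]$ agreeing on $\ku A$ combine into $\sigma(b \wedge c') = \phi(b) \wedge \psi(c')$, which is measure-preserving by the product formula.

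For the open-interior criterion, given $\ov a = (a_1,\dots,a_n) \in \ku B$ and $\eps>0$, I would exhibit an explicit $g_0 \in N(\ov a, \eps/2)$ and $\delta>0$ with $\mu(a_i \wedge g_0 a_1) \geqslant \delta$ for every $i$: for sufficiently small $\delta$, pick for each $i \geqslant 2$ disjoint sets $s_i \subseteq a_1$ and $t_i \subseteq a_i$ of measure $\delta$, and take $g_0$ to transpose each $s_i$ with $t_i$ and act as the identity elsewhere. With $\eta, \tau < \delta/2$, the set
$$
U \;=\; \bigl(N(\ov a, \eps) \cap g_0 N(\ov a, \eta)\bigr) \times N(\ov a, \tau)
$$
is open and contains $(g_0, 1)$. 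For any $(g,f) \in U$ the isometry property of $g_0$ gives $\mu(g a_1 \triangle g_0 a_1) < \eta$, so
$$
\mu(a_i \wedge \neg f a_i) \;<\; \tau \;<\; \delta/2 \;\leqslant\; \mu(a_i \wedge g a_1).
$$
Lemma \ref{lem:measurealgebra}, applied with $\ov b = f\ov a$ and $\ov c = g\ov a$, then produces a partition $\ov e$ of the unit satisfying $\mu(e_i \wedge a_j) = \mu(f a_i \wedge a_j)$ and $\mu(e_i \wedge g a_j) = \mu(a_i \wedge g a_j)$ for all $i,j$. Since the type of any tuple extending $\ov a$ (respectively $g\ov a$) in ${\sf Meas}_\mu$ is determined by these joint-intersection measures, ultrahomogeneity of ${\sf Meas}_\mu$ gives $\ov e \in \ku O(f\ov a / \ov a)$ and $\ov e \in \ku O(\ov a / g\ov a)$, so $U$ lies in the set whose interior must be shown to be nonempty. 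The only step requiring genuine argument is the extension property; the open-interior criterion falls out almost immediately from Lemma \ref{lem:measurealgebra} once the right $g_0$ has been found.
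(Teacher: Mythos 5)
Your proposal is correct and follows essentially the same route as the paper: you invoke Corollary \ref{cor:autconti}, use the basis of finite partitions of the unit, and apply Lemma \ref{lem:measurealgebra} to produce the witnessing tuple $\ov e$. There are, however, two points worth comparing. First, the paper states the Hrushovski property as a lemma but never explicitly verifies the extension property for ${\sf Meas}_\mu$, even though it is a hypothesis of Corollary \ref{cor:autconti}; your stochastic-independence argument (product formula on atoms, then $\sigma(b\wedge c') = \phi(b)\wedge\psi(c')$) is a correct and welcome way to fill this gap, and this is the most substantive difference between your write-up and the paper's. Second, for the open-interior criterion the paper simply takes
$$
U=\big\{ (g,f) \in N(\ov{a}, \eps) \times {\sf Aut}({\sf Meas}_\mu) \del \mu(a_i \wedge \neg fa_i) < \mu(a_i \wedge ga_1) \text{ for all }i\big\},
$$
observes that it is open (strict inequality between continuous functions of $(g,f)$), and only constructs a single witness $(g,f)$ to show nonemptiness; your construction of an explicit $g_0$ swapping small sets and the triple of radii $(\eps,\eta,\tau)$ is more hands-on but proves strictly less than openness of the paper's $U$, while being enough for the corollary, so it is a matter of taste. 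Your choice $\ov c = g\ov a$ when feeding Lemma \ref{lem:measurealgebra} is the consistent one; the paper writes $\ov c = g\ov b$, which appears to be a slip since its own definition of $U$ and its final conclusion $\ov e \in \ku O(\ov a/g\ov a)$ require $\ov c = g\ov a$, exactly as you have it.
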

\begin{proof}
We show that the conditions of Corollary \ref{cor:autconti} are satisfied by the structure ${\sf Meas}_\mu$. First of all, observe that the collection $\ku{B}$ of all finite tuples $\ov{a} = (a_1, \dots, a_n)$ of finite non-zero partitions of the unit in ${\sf Meas}_\mu$ is a basis for ${\sf Aut}({\sf Meas}_\mu)$. Let $\ov{a} \in \ku{B}$ and $\epsilon >0$ be given. 
We claim that the open set
$$
U=\big\{    (g,f) \in N(\ov{a}, \epsilon) \times {\sf Aut}({\sf Meas}_\mu)   \del    \mu(a_i \wedge \neg fa_i) < \mu(a_i \wedge ga_1) \text{ for all }i\big\}
$$
is nonempty. To see this, pick some $g \in N(\ov{a}, \epsilon)$ such that $\mu(a_i \wedge ga_1) > 0$ for all $i$ and set 
$$
\delta =\min_i\mu(a_i \wedge ga_1).
$$
Then, if $f \in N(\ov{a}, \delta)$, we have, for all $i$,
$$
\mu(a_i \wedge \neg fa_i) \;\leqslant\; \mu(a_i \triangle fa_i)\; < \;\delta\; <\; \mu(a_i \wedge ga_1)
$$
and so $(g,f)\in U$. 

Furthermore, for any $(g,f)\in U$, write $\ov b=f\ov a$ and $\ov c=g\ov b$, whereby
$$
\mu(a_i)=\mu(b_i) \qquad\text{and}\qquad \mu(a_i \wedge \neg b_i) < \mu(a_i \wedge c_1)
$$
for all $i$.  By Lemma \ref{lem:measurealgebra} there is a partition of the unit $e_1 \vee \dots \vee e_n$ so that, for all $i,j$,
\begin{enumerate}
\item $\mu(e_i)=\mu(a_i)=\mu(b_i)$,
\item $\mu(e_i\wedge a_j)=\mu(b_i\wedge a_j)$,
\item $\mu(e_i\wedge c_j)= \mu(a_i\wedge c_j)$.
\end{enumerate}
Condition (2) entails $\ov{e} \in \ku{O}(\ov{b}/\ov{a})$, whereas condition (3) implies $\ov{e} \in \ku{O}(\ov{a}/\ov{c})$ and thus,
$$
\ov{e} \in \ku{O}(f\ov{a}/\ov{a}) \cap \ku{O}(\ov{a}/g\ov{a}).
$$
This shows that ${\sf Aut}({\sf Meas}_\mu)$ satisfies the criteria of Corollary \ref{cor:autconti} and therefore has the automatic continuity property.
\end{proof}


\section{The unitary group of the separable Hilbert space}\label{sec:hilbert}
Let $H$ be a separable infinite-dimensional complex Hilbert space and let $U(H)$ be its unitary group equipped with the strong operator topology, that is, the topology of pointwise norm-convergence on $H$.  Let $\go K\subseteq \C$ be a countable algebraically closed subfield and consider the following first-order language,
$$
L = L_{\sf dist} \cup \{0, +\}\cup \{m_q \del  q \in \go K\}\cup\{  R_r, I_r \del  r \in \Q\},
$$
where the $R_r$ and  $I_r$ are binary relation symbols, $m_r$ are unary function symbols, $+$ is a binary function symbol, and $0$ is a constant symbol. Consider now $H$ as a complete metric structure in the language $L$  with interpretations
\begin{align*}
 & H \models D_r(x,y) \quad\Leftrightarrow\quad \norm{x-y} \leqslant r, \\
    & H \models R_r(x,y) \quad\Leftrightarrow\quad {\mathfrak {Re}}\langle x, y \rangle \leqslant r, \\
    & H \models I_r(x,y) \quad\Leftrightarrow\quad  {\mathfrak {Im}}\langle x, y \rangle \leqslant r, \\
    & m^H_q(x) = qx, \\
    & +^{ H}(x,y) = x+y, \\
    & 0^{ H} = 0.
\end{align*}
Note that ${\sf Aut}(H)$ is isomorphic to $U(H)$, because $\go K$-linear mappings that preserve the inner product are just unitary operators. We remark that by simple arguments using orthogonality and the Gram--Schmidt orthonormalisation procedure, $H$ can be seen to have the extension and Hrushovski properties (see, e.g., \cite[Section 6]{ros-forum}).

\begin{lemme}
 Suppose $K$ is a closed infinite- and coinfinite-dimensional subspace of the Hilbert space $H$. Then there is a closed subgroup $G\leqslant U(H)$ so that $G\iso {\sf Aut}({\sf Meas}_\mu)$ and an isomorphic embedding $U(K)\maps \Delta G$ for which
$$
\Delta(u)|_K=u
$$ 
for all $u\in U(K)$.
\end{lemme}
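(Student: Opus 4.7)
The plan is to realize $G$ as the image under the Koopman representation of the measure-preserving automorphisms of a Gaussian probability space canonically associated to $K$, and to embed $U(K)$ into this group via second quantization.

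Concretely, I would invoke the symmetric Fock space construction. Setting $F_s(K)=\bigoplus_{n\geqslant 0}K^{\otimes_s n}$, with $K^{\otimes_s 0}=\C$, there is classically a standard probability space $(X,\mu)$ and a Hilbert space isomorphism $\Phi\colon F_s(K)\to L^2(X,\mu)$ sending the vacuum $\C$ onto the constants and the first chaos $K^{\otimes_s 1}=K$ isomorphically onto a subspace of centered Gaussian random variables in $L^2(X,\mu)$ which generates the measure algebra ${\sf Meas}_\mu$. Under $\Phi$, the subspace $L^2_0(X,\mu):=L^2(X,\mu)\ominus\C\mathbf{1}$ is identified with $\bigoplus_{n\geqslant 1}K^{\otimes_s n}$.

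Next, second quantization $u\mapsto \Gamma(u)=\bigoplus_n u^{\otimes_s n}$ is a continuous injective homomorphism $U(K)\to U(F_s(K))$, and each $\Gamma(u)$, being a vacuum-preserving Gaussian unitary, corresponds under $\Phi$ to the Koopman operator of a measure-preserving transformation of $(X,\mu)$; this yields a continuous injective group homomorphism $\tilde\Gamma\colon U(K)\to{\sf Aut}({\sf Meas}_\mu)$. On the other hand, the Koopman representation $\Pi\colon{\sf Aut}({\sf Meas}_\mu)\to U(L^2(X,\mu))$ is classically a topological group isomorphism onto a closed subgroup, since strong-operator limits of Koopman operators preserve products of bounded functions and hence again come from measure-preserving transformations. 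Because constants are preserved by every Koopman operator, $\Pi$ restricts to an analogous topological isomorphism $\Pi_0\colon{\sf Aut}({\sf Meas}_\mu)\to U(L^2_0(X,\mu))$ onto a closed subgroup.

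Finally, transport everything into $U(H)$. Since both $K^\perp\leqslant H$ and $\bigoplus_{n\geqslant 2}K^{\otimes_s n}\leqslant L^2_0(X,\mu)$ are separable infinite-dimensional complex Hilbert spaces, one can pick a Hilbert space isomorphism $J\colon H\to L^2_0(X,\mu)$ with $J|_K={\sf id}_K$. Setting $G=J^{-1}\Pi_0({\sf Aut}({\sf Meas}_\mu))J\leqslant U(H)$ and $\Delta(u)=J^{-1}\Pi_0(\tilde\Gamma(u))J$, one has that $G$ is a closed subgroup of $U(H)$ topologically isomorphic to ${\sf Aut}({\sf Meas}_\mu)$ via conjugation by $J$, and $\Delta\colon U(K)\to G$ is a continuous injective group homomorphism. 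Since $\Pi_0(\tilde\Gamma(u))$ acts on the first chaos as $u$ itself, and $J|_K={\sf id}_K$, we get $\Delta(u)|_K=u$; this identity also immediately forces $\Delta$ to be a homeomorphism onto its image, because strong convergence $\Delta(u_n)\to\Delta(u)$ in $U(H)$ restricts to $u_n\to u$ in $U(K)$. The main obstacle is the Gaussian/Fock apparatus itself: verifying that each second-quantized operator $\Gamma(u)$ genuinely comes from a measure-preserving automorphism of $(X,\mu)$ and that $\tilde\Gamma$ and $\Pi$ are topological embeddings are the classical but nontrivial facts (going back to Segal and Nelson) on which the argument rests; the transport step and the final verifications are then essentially formal.
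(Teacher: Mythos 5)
Your proposal is correct and takes essentially the same route as the paper: the paper simply imports the Gaussian construction as a black box, citing Appendix~D of Kechris's \emph{Global aspects of ergodic group actions} for the existence of a closed subspace $L\subseteq L^2(X,\mu)$ (the first chaos) and an embedding $U(L)\to{\sf Aut}(X,\mu)$ satisfying $(\text{Koopman})\circ\Theta(u)|_L=u$, then conjugates by a unitary $T\colon H\to L^2(X,\mu)$ with $T[K]=L$. Your version unpacks that citation into the explicit symmetric Fock space / second quantization picture and uses $L^2_0$ rather than the full $L^2$, but the decomposition, the role of the first chaos, and the final transport-by-a-unitary step are the same.
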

 
\begin{proof}
Following the exposition of \cite[Appendix D]{kechris-global}, there is a standard probability space $(X,\mu)$, a closed infinite- and coinfinite-dimensional subspace $L\subseteq L^2(X,\mu)$ and a isomorphic embedding
$$
U(L)\maps\Theta{\sf Aut}(X,\mu)
$$
so that, if 
$$
{\sf Aut}(X,\mu) \maps \Phi U\big(L^2(X,\mu)\big)
$$
denotes the Koopman representation, i.e.,
$$
\Phi(f)(\phi)=\phi\circ f\inv,
$$
then $(\Phi\circ\Theta)u|_L=u$ for all $u\in U(L)$. Let now $H\maps TL^2(X,\mu)$ be a unitary isomorphism so that $T[K]=L$ and set 
$$
G=\big\{T\inv\circ\Phi(f)\circ T\del f\in {\sf Aut}(X,\mu)\big\}.
$$
Because the Koopman representation embeds ${\sf Aut}(X,\mu)$ into $U\big(L^2(X,\mu)\big)$, we have $G\iso {\sf Aut}({\sf Meas}_\mu)$. Finally, for $u\in U(K)$, let
$$
\Delta(u)=T\inv\circ     \Phi\big(\Theta(TuT\inv|_L)     \big)           \circ T,
$$
whereby $\Delta(u)|_K=u$.
\end{proof}

\begin{thm}\cite{tsankov}
The unitary group $U(H)$ of separable infinite-dimensional Hilbert space $H$ has the automatic continuity property.
\end{thm}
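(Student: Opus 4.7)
The plan is to verify the hypotheses of Corollary~\ref{cor:autconti} for $H$ viewed as the complete metric $L$-structure, so that ${\sf Aut}(H)=U(H)$. The extension and Hrushovski properties of $H$ are already noted (via Gram--Schmidt and orthogonal realisations of partial isometries), so the task reduces to verifying the orbit-intersection criterion. I would take as a basis $\ku B$ the collection of finite orthonormal tuples $\ov a=(a_1,\ldots,a_n)$ in $H$; these form a basis because the pointwise stabiliser of any finite tuple agrees with the pointwise stabiliser of a Gram--Schmidt orthonormalisation of the same span.

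Fix $\ov a\in\ku B$ and $\epsilon>0$. A tuple $\ov c$ realises $\ku O(\ov a/g\ov a)\cap \ku O(f\ov a/\ov a)$ iff $\langle c_i,c_j\rangle=\delta_{ij}$, $\langle c_i,ga_j\rangle=\langle a_i,ga_j\rangle$ and $\langle c_i,a_j\rangle=\langle fa_i,a_j\rangle$ for all $i,j$. Setting $E={\rm span}(\ov a\cup g\ov a)$ and decomposing $c_i=x_i+y_i$ with $x_i\in E$ and $y_i\in E^\perp$, the two linear constraints uniquely determine $\ov x=P_E\ov c$ (provided $\ov a\cup g\ov a$ is linearly independent, an open condition near the witnesses below), and the remaining norm/orthogonality condition on $\ov c$ becomes $G(\ov y)=I-G(\ov x)$. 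Since $E^\perp$ is infinite-dimensional, an $\ov y\in E^\perp$ with any prescribed positive semi-definite Gram matrix exists, so the orbit-intersection criterion reduces to the $n\times n$ condition $M(g,f):=I-G(\ov x(g,f))\succeq 0$, which depends continuously on $(g,f)$.

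For explicit witnesses, pick $\theta>0$ with $2\sin(\theta/2)<\epsilon$ and orthonormal tuples $(e_1,\ldots,e_n)$ and $(h_1,\ldots,h_n)$ orthogonal to $\ov a$ and to one another. Let $g_0$ rotate each plane ${\rm span}(a_i,e_i)$ by angle $\theta$ (identity elsewhere) and let $f_0$ rotate each plane ${\rm span}(a_i,h_i)$ by $\arccos(\cos^2\theta)$. Then $g_0\in N(\ov a,\epsilon)$, and a direct computation gives $x_i=\cos\theta\cdot g_0 a_i$, so $G(\ov x)=\cos^2\theta\cdot I$ and $M(g_0,f_0)=\sin^2\theta\cdot I\succ 0$. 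Strict positive definiteness is an open condition in $(g,f)$, so the orbit-intersection property holds on a full open neighbourhood of $(g_0,f_0)$ in $N(\ov a,\epsilon)\times U(H)$. Corollary~\ref{cor:autconti} then yields that $U(H)$ has the automatic continuity property.

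The only real work is the Gram-matrix bookkeeping: checking that $\ov x$ is well-defined and continuous in $(g,f)$ near $(g_0,f_0)$, that $I-G(\ov x)\succeq 0$ truly characterises realisability of $\ov y$ in the infinite-dimensional $E^\perp$, and that the chosen parameter $\mu=\cos^2\theta$ lies in the feasible window $\mu>\cos(2\theta)$. An alternative route would be to combine the preceding lemma embedding ${\sf Aut}({\sf Meas}_\mu)$ into $U(H)$ with the Steinhaus exponent $64$ for ${\sf Aut}({\sf Meas}_\mu)$ noted after Corollary~\ref{cor:autconti}, obtaining identity neighbourhoods inside these closed subgroups; however, assembling such partial information into an identity neighbourhood of $U(H)$ itself requires a bounded-generation statement for $U(H)$ that is much less transparent than the direct Gram-matrix computation above.
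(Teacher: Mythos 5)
Your proof is correct and takes a genuinely different route from the paper's. You verify the orbit\mbox{-}intersection criterion of Corollary~\ref{cor:autconti} directly for $U(H)$: characterise the intersection $\ku O(\ov a/g\ov a)\cap \ku O(f\ov a/\ov a)$ by the inner\mbox{-}product (type) conditions, observe that the projection $\ov x = P_E\ov c$ onto $E={\rm span}(\ov a\cup g\ov a)$ is uniquely determined by the two families of linear constraints (under the open condition that $\ov a\cup g\ov a$ be linearly independent), and reduce realisability to the positive semi\mbox{-}definiteness of $I-G(\ov x)$, which can always be matched by $\ov y$ in the infinite\mbox{-}dimensional $E^\perp$. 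The witness computation $(g_0,f_0)$ with $x_i=\cos\theta\cdot g_0 a_i$ and $M(g_0,f_0)=\sin^2\theta\cdot I\succ 0$ is correct (and your feasibility window $\mu>\cos(2\theta)$ is exactly the condition $\|x_i\|\leqslant 1$); openness of strict positive definiteness and continuity of $M$ then give a non\mbox{-}empty open set, so Corollary~\ref{cor:autconti} applies.

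The paper instead does \emph{not} check the orbit\mbox{-}intersection criterion for $U(H)$. It applies Lemma~\ref{lem:sabok} only to obtain an orthonormal tuple $\ov a$ with $U(H,\ov a)$ contained in a bounded power of $W$, then imports the Steinhaus property of ${\sf Aut}({\sf Meas}_\mu)$ (established in Section~\ref{sec:meas}) through the Koopman\mbox{-}representation embedding $U(K)\maps\Delta G\iso{\sf Aut}({\sf Meas}_\mu)$, and finishes with an explicit factorisation $u = v^{-1}\cdot\Delta(vuw|_K)\cdot s\cdot w^{-1}$ (with $v,w,s$ in pointwise stabilisers of the fixed tuples) that moves a general element of the identity neighbourhood into the subgroup $\Delta(U(K))$. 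This is precisely the ``alternative route'' you dismiss; the paper does not invoke a general bounded\mbox{-}generation statement for $U(H)$ but rather engineers the decomposition by hand using orthogonal complements, so the route is more workable than your phrasing suggests, though certainly more indirect. The trade\mbox{-}off: your argument is self\mbox{-}contained, gives the cleaner Steinhaus exponent $64$ straight from Corollary~\ref{cor:autconti}, and in particular does not depend on the measure\mbox{-}algebra section; the paper's approach, by piggy\mbox{-}backing on ${\sf Aut}({\sf Meas}_\mu)$, avoids having to manage the Gram\mbox{-}matrix computation but incurs a larger exponent and an auxiliary embedding lemma.
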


\begin{proof}
Fix a symmetric $\sigma$-syndetic subset $W\subseteq U(H)$.
By Lemma \ref{lem:sabok}, there is an orthonormal tuple $\ov a=(a_1,\ldots, a_n)$ in $H$ so that the pointwise stabiliser
$$
U(H,\ov a)=\{u\in U(H)\del u(a_i)=a_i \text{ for all }i\}
$$
is contained in $W^{10}$. Let also $K\subseteq H$ be a closed  infinite- and coinfinite-dimensional subspace of $H$ containing the vectors $a_1.\ldots, a_n$ and fix a closed subgroup $G\leqslant U(H)$ so that $G\iso {\sf Aut}({\sf Meas}_\mu)$ and an isomorphic embedding $U(K)\maps \Delta G$ for which
$$
\Delta(u)|_K=u
$$ 
for all $u\in U(K)$. Because $W$ is $\sigma$-syndetic in $U(H)$, $V=W^2\cap G$ is symmetric $\sigma$-syndetic in $G$ and hence, since ${\sf Aut}({\sf Meas}_\mu)$ is Steinhaus with exponent $64$, the sets 
$$
V^{64}\subseteq W^{128}\cap G
$$
are identity neighbourhoods in $G$. By continuity of $\Delta$, the inverse image $\Delta\inv(W^{128})$
is an identity neighbourhood in $U(K)$ and hence, for some orthonormal tuple $\ov b=(b_1,\ldots, b_m)$ extending $\ov a$ and some $\eps>0$, we find that
$$
M=\{u\in U(K)\del \norm{u(b_i)-b_i}<\eps \text{ for all }i\}\subseteq  \Delta\inv(W^{128}).
$$
We claim that
$$
N=\{u\in U(H)\del \norm{u(b_i)-b_i}<\eps \text{ for all }i\}\subseteq  W^{158}.
$$
To see this, fix some $u\in N$ and expand $\ov b$ to an orthonormal tuple
$$
b_1,\ldots, b_m, c_1,\ldots, c_k
$$
in $H$ so that $u(b_i)\in {\sf span}(b_1,\ldots, b_m, c_1,\ldots, c_k)$ for all $i$. Now find vectors $d_1,\ldots, d_k\in K$ so that also
$$
b_1,\ldots, b_m, d_1,\ldots, d_k
$$
is orthonormal and let $v\in U(H, \ov b)\leqslant U(H,\ov a)\subseteq W^{10}$ be chosen so that $v(c_i)=d_i$ for all $i$. Then, for all $i$,
$$
vu(b_i)\in {\sf span}(b_1,\ldots, b_m, d_1,\ldots, d_k)\subseteq K,
$$
i.e., $b_i\in K\cap (vu)\inv(K)$.
It follows that there is some $w\in U(H, \ov b)\subseteq W^{10}$ so that
$$
w [K]=(vu)\inv (K)
$$
and thus $vuw[K]=K$. Furthermore, for all $i$,
$$
\norm{vuw(b_i)-b_i}=\norm{uw(b_i)-v\inv(b_i)}=\norm{u(b_i)-b_i}<\eps,
$$ 
which shows that $vuw|_K\in M$ and $\Delta(vuw|_K)\in W^{128}$. As $\Delta\big(vuw|_K\big)|_K=vuw|_K$, we finally see that
$$
vuw\in \Delta\big(vuw|_K\big)\cdot U(H, \ov b) \subseteq W^{128}\cdot W^{10},
$$
whereby
$$
u\in v\inv \cdot W^{138}\cdot w\inv \subseteq W^{158}
$$
as claimed. Because $N$ is an identity neighbourhood in $U(H)$, the latter group is Steinhaus and thus satisfies the automatic continuity property.
\end{proof}


\begin{bibdiv}
\begin{biblist}


\bib{alperin}{article}{
   author={Alperin, Roger},
   title={Locally compact groups acting on trees},
   journal={Pacific J. Math.},
   volume={100},
   date={1982},
   number={1},
   pages={23--32},
   issn={0030-8730},
   review={\MR{0661438}},
}


\bib{banakh2022automatic}{article}{
      title={Automatic continuity of measurable homomorphisms on \v{C}ech-complete topological groups}, 
      url = {https://doi.org/10.48550/arXiv.2206.02481},
      author={Banakh, T.},
      year={2022},
      eprint={arXiv 2206.02481},
 
}

\bib{BBM}{article}{
   author={Ben Yaacov, Ita\"{\i}},
   author={Berenstein, Alexander},
   author={Melleray, Julien},
   title={Polish topometric groups},
   journal={Trans. Amer. Math. Soc.},
   volume={365},
   date={2013},
   number={7},
   pages={3877--3897},
   issn={0002-9947},
   review={\MR{3042607}},
   doi={10.1090/S0002-9947-2013-05773-X},
}

\bib{bogopolski}{article}{
   author={Bogopolski, Oleg},
   author={Corson, Samuel M.},
   title={Abstract homomorphisms from some topological groups to
   acylindrically hyperbolic groups},
   journal={Math. Ann.},
   volume={384},
   date={2022},
   number={3-4},
   pages={1017--1055},
   issn={0025-5831},
   review={\MR{4498466}},
   doi={10.1007/s00208-021-02278-4},
}

\bib{bochner}{article}{
   author={Bochner, Salomon},
   author={Montgomery, Deane},
   title={Groups of differentiable and real or complex analytic
   transformations},
   journal={Ann. of Math. (2)},
   volume={46},
   date={1945},
   pages={685--694},
   issn={0003-486X},
   review={\MR{0014102}},
   doi={10.2307/1969204},
}

\bib{NonMeasurable}{article}{
   author={Brzuchowski, Jan},
   author={Cicho\'{n}, Jacek},
   author={Grzegorek, Edward},
   author={Ryll-Nardzewski, Czes\l aw},
   title={On the existence of nonmeasurable unions},
   language={English, with Russian summary},
   journal={Bull. Acad. Polon. Sci. S\'{e}r. Sci. Math.},
   volume={27},
   date={1979},
   number={6},
   pages={447--448},
   issn={0137-639x},
   review={\MR{0560178}},
}

\bib{Bukovsky}{article}{
   author={Bukovsk\'{y}, Lev},
   title={Any partition into Lebesgue measure zero sets produces a
   nonmeasurable set},
   language={English, with Russian summary},
   journal={Bull. Acad. Polon. Sci. S\'{e}r. Sci. Math.},
   volume={27},
   date={1979},
   number={6},
   pages={431--435},
   issn={0137-639x},
   review={\MR{0560175}},
}


\bib{chen-mann}{article}{
   author={Chen, Lei},
   author={Mann, Kathryn},
   title={Structure theorems for actions of homeomorphism groups},
   journal={Duke Math. J.},
   volume={172},
   date={2023},
   number={5},
   pages={915--962},
   issn={0012-7094},
   review={\MR{4568050}},
   doi={10.1215/00127094-2022-0019},
}

\bib{christensen}{article}{
   author={Christensen, Jens Peter Reus},
   title={Borel structures in groups and semigroups},
   journal={Math. Scand.},
   volume={28},
   date={1971},
   pages={124--128},
   issn={0025-5521},
   review={\MR{0308322}},
   doi={10.7146/math.scand.a-11010},
}


\bib{Dantzig}{article}{
	year = {1936},
	journal = {Compositio Mathematica},
	volume = {3},
	title = {Zur topologischen {A}lgebra. {III}. {B}rouwersche und {C}antorsche {G}ruppen},
	author = {D. van Dantzig},
	pages = {408--426}
	} 

\bib{diprisco}{article}{
   author={Di Prisco, Carlos Augusto},
   author={Todorcevic, Stevo},
   title={Perfect-set properties in $L({\bf R})[U]$},
   journal={Adv. Math.},
   volume={139},
   date={1998},
   number={2},
   pages={240--259},
   issn={0001-8708},
   review={\MR{1654181}},
   doi={10.1006/aima.1998.1752},
}

\bib{dixon}{article}{
   author={Dixon, John D.},
   author={Neumann, Peter M.},
   author={Thomas, Simon},
   title={Subgroups of small index in infinite symmetric groups},
   journal={Bull. London Math. Soc.},
   volume={18},
   date={1986},
   number={6},
   pages={580--586},
   issn={0024-6093},
   review={\MR{0859950}},
   doi={10.1112/blms/18.6.580},
}

\bib{dudley}{article}{
   author={Dudley, R. M.},
   title={Continuity of homomorphisms},
   journal={Duke Math. J.},
   volume={28},
   date={1961},
   pages={587--594},
   issn={0012-7094},
   review={\MR{0136676}},
}


\bib{edwards}{article}{
   author={Edwards, Robert D.},
   author={Kirby, Robion C.},
   title={Deformations of spaces of imbeddings},
   journal={Ann. of Math. (2)},
   volume={93},
   date={1971},
   pages={63--88},
   issn={0003-486X},
   review={\MR{0283802}},
   doi={10.2307/1970753},
}

\bib{markovic}{article}{
   author={Epstein, David},
   author={Markovic, Vladimir},
   title={Extending homeomorphisms of the circle to quasiconformal
   homeomorphisms of the disk},
   journal={Geom. Topol.},
   volume={11},
   date={2007},
   pages={517--595},
   issn={1465-3060},
   review={\MR{2302497}},
   doi={10.2140/gt.2007.11.517},
}


\bib{Guran}{article}{
   author={Guran, I. I.},
   title={Topological groups similar to Lindel\"{o}f groups},
   language={Russian},
   journal={Dokl. Akad. Nauk SSSR},
   volume={256},
   date={1981},
   number={6},
   pages={1305--1307},
   issn={0002-3264},
   review={\MR{0606469}},
}


\bib{HHLS}{article}{
   author={Hodges, Wilfrid},
   author={Hodkinson, Ian},
   author={Lascar, Daniel},
   author={Shelah, Saharon},
   title={The small index property for $\omega$-stable $\omega$-categorical
   structures and for the random graph},
   journal={J. London Math. Soc. (2)},
   volume={48},
   date={1993},
   number={2},
   pages={204--218},
   issn={0024-6107},
   review={\MR{1231710}},
   doi={10.1112/jlms/s2-48.2.204},
}


\bib{kaichouh}{article}{
   author={Ka\"ichouh, Adriane},
   author={Le Ma\^itre, Fran\c cois},
   title={Connected Polish groups with ample generics},
   journal={Bull. Lond. Math. Soc.},
   volume={47},
   date={2015},
   number={6},
   pages={996--1009},
   issn={0024-6093},
   review={\MR{3431579}},
   doi={10.1112/blms/bdv078},
}

\bib{kechris-book}{book}{
   author={Kechris, Alexander S.},
   title={Classical descriptive set theory},
   series={Graduate Texts in Mathematics},
   volume={156},
   publisher={Springer-Verlag, New York},
   date={1995},
   pages={xviii+402},
   isbn={0-387-94374-9},
   review={\MR{1321597}},
   doi={10.1007/978-1-4612-4190-4},
}

\bib{kechris-global}{book}{
   author={Kechris, Alexander S.},
   title={Global aspects of ergodic group actions},
   series={Mathematical Surveys and Monographs},
   volume={160},
   publisher={American Mathematical Society, Providence, RI},
   date={2010},
   pages={xii+237},
   isbn={978-0-8218-4894-4},
   review={\MR{2583950}},
   doi={10.1090/surv/160},
}

\bib{kec-ros}{article}{
   author={Kechris, Alexander S.},
   author={Rosendal, Christian},
   title={Turbulence, amalgamation, and generic automorphisms of homogeneous
   structures},
   journal={Proc. Lond. Math. Soc. (3)},
   volume={94},
   date={2007},
   number={2},
   pages={302--350},
   issn={0024-6115},
   review={\MR{2308230}},
   doi={10.1112/plms/pdl007},
}

\bib{kim}{book}{
   author={Kim, Sang-hyun},
   author={Koberda, Thomas},
   title={Structure and regularity of group actions on one-manifolds},
   series={Springer Monographs in Mathematics},
   publisher={Springer, Cham},
   date={[2021] \copyright 2021},
   pages={xiv+323},
   isbn={978-3-030-89005-6},
   isbn={978-3-030-89006-3},
   review={\MR{4381312}},
   doi={10.1007/978-3-030-89006-3},
}

\bib{kwiatkowska}{article}{
   author={Kwiatkowska, Aleksandra},
   title={The group of homeomorphisms of the Cantor set has ample generics},
   journal={Bull. Lond. Math. Soc.},
   volume={44},
   date={2012},
   number={6},
   pages={1132--1146},
   issn={0024-6093},
   review={\MR{3007646}},
   doi={10.1112/blms/bds039},
}


\bib{larson}{article}{
   author={Larson, Paul B.},
   author={Zapletal, Jind\v{r}ich},
   title={Discontinuous homomorphisms, selectors, and automorphisms of the
   complex field},
   journal={Proc. Amer. Math. Soc.},
   volume={147},
   date={2019},
   number={4},
   pages={1733--1737},
   issn={0002-9939},
   review={\MR{3910437}},
   doi={10.1090/proc/14338},
}

\bib{larson2}{book}{
   author={Larson, Paul B.},
   author={Zapletal, Jind\v rich},
   title={Geometric set theory},
   series={Mathematical Surveys and Monographs},
   volume={248},
   publisher={American Mathematical Society, Providence, RI},
   date={[2020] \copyright 2020},
   pages={ix+330},
   isbn={978-1-4704-5462-3},
   review={\MR{4249448}},
}


\bib{Malicki}{article}{
   author={Malicki, Maciej},
   title={Consequences of the existence of ample generics and automorphism
   groups of homogeneous metric structures},
   journal={J. Symb. Log.},
   volume={81},
   date={2016},
   number={3},
   pages={876--886},
   issn={0022-4812},
   review={\MR{3569109}},
   doi={10.1017/jsl.2015.73},
}

\bib{malicki2}{article}{
   author={Malicki, Maciej},
   title={An example of a non non-archimedean Polish group with ample
   generics},
   journal={Proc. Amer. Math. Soc.},
   volume={144},
   date={2016},
   number={8},
   pages={3579--3581},
   issn={0002-9939},
   review={\MR{3503726}},
   doi={10.1090/proc/13017},
}

\bib{mann}{article}{
   author={Mann, Kathryn},
   title={Automatic continuity for homeomorphism groups and applications},
   note={With an appendix by Fr\'{e}d\'{e}ric Le Roux and Mann},
   journal={Geom. Topol.},
   volume={20},
   date={2016},
   number={5},
   pages={3033--3056},
   issn={1465-3060},
   review={\MR{3556355}},
   doi={10.2140/gt.2016.20.3033},
}


\bib{nikolov}{article}{
   author={Nikolov, Nikolay},
   author={Segal, Dan},
   title={On finitely generated profinite groups. I. Strong completeness and
   uniform bounds},
   journal={Ann. of Math. (2)},
   volume={165},
   date={2007},
   number={1},
   pages={171--238},
   issn={0003-486X},
   review={\MR{2276769}},
   doi={10.4007/annals.2007.165.171},
}


\bib{Rogers}{collection}{
editor={Rogers, Claude Amboise},
   title={Analytic sets},
   note={Lectures delivered at a Conference held at University College,
   University of London, London, July 16--29, 1978},
   publisher={Academic Press, Inc. [Harcourt Brace Jovanovich, Publishers],
   London-New York},
   date={1980},
   pages={x+499},
   isbn={0-12-593150-6},
   review={\MR{0608794}},
}

\bib{rosendal-israel}{article}{
   author={Rosendal, Christian},
   title={Automatic continuity in homeomorphism groups of compact
   2-manifolds},
   journal={Israel J. Math.},
   volume={166},
   date={2008},
   pages={349--367},
   issn={0021-2172},
   review={\MR{2430439}},
   doi={10.1007/s11856-008-1034-x},
}

\bib{ros-forum}{article}{
   author={Rosendal, Christian},
   title={A topological version of the Bergman property},
   journal={Forum Math.},
   volume={21},
   date={2009},
   number={2},
   pages={299--332},
   issn={0933-7741},
   review={\MR{2503307}},
   doi={10.1515/FORUM.2009.014},
}

\bib{rosendal-BSL}{article}{
   author={Rosendal, Christian},
   title={Automatic continuity of group homomorphisms},
   journal={Bull. Symbolic Logic},
   volume={15},
   date={2009},
   number={2},
   pages={184--214},
   issn={1079-8986},
   review={\MR{2535429}},
   doi={10.2178/bsl/1243948486},
}

\bib{rosendal-jsl}{article}{
   author={Rosendal, Christian},
   title={Finitely approximable groups and actions part II: Generic
   representations},
   journal={J. Symbolic Logic},
   volume={76},
   date={2011},
   number={4},
   pages={1307--1321},
   issn={0022-4812},
   review={\MR{2895397}},
   doi={10.2178/jsl/1318338851},
}

\bib{rosendal-Pi}{article}{
   author={Rosendal, Christian},
   title={Continuity of universally measurable homomorphisms},
   journal={Forum Math. Pi},
   volume={7},
   date={2019},
   pages={e5, 20},
   review={\MR{3996719}},
   doi={10.1017/fmp.2019.5},
}

\bib{ros-sol}{article}{
   author={Rosendal, Christian},
   author={Solecki, S\l awomir},
   title={Automatic continuity of homomorphisms and fixed points on metric
   compacta},
   journal={Israel J. Math.},
   volume={162},
   date={2007},
   pages={349--371},
   issn={0021-2172},
   review={\MR{2365867}},
   doi={10.1007/s11856-007-0102-y},
}


\bib{sabok}{article}{
   author={Sabok, Marcin},
   title={Automatic continuity for isometry groups},
   journal={J. Inst. Math. Jussieu},
   volume={18},
   date={2019},
   number={3},
   pages={561--590},
   issn={1474-7480},
   review={\MR{3936642}},
   doi={10.1017/s1474748017000135},
}

\bib{Sabok-err}{article}{
   author={Sabok, Marcin},
   title={Automatic continuity for isometry groups---erratum},
   journal={J. Inst. Math. Jussieu},
   volume={21},
   date={2022},
   number={6},
   pages={2253--2255},
   issn={1474-7480},
   review={\MR{4515296}},
   doi={10.1017/S1474748020000766},
}

\bib{saxl}{article}{
   author={Saxl, Jan},
   author={Wilson, John S.},
   title={A note on powers in simple groups},
   journal={Math. Proc. Cambridge Philos. Soc.},
   volume={122},
   date={1997},
   number={1},
   pages={91--94},
   issn={0305-0041},
   review={\MR{1443588}},
   doi={10.1017/S030500419600165X},
}

\bib{shelah}{article}{
   author={Shelah, Saharon},
   title={Can you take Solovay's inaccessible away?},
   journal={Israel J. Math.},
   volume={48},
   date={1984},
   number={1},
   pages={1--47},
   issn={0021-2172},
   review={\MR{0768264}},
   doi={10.1007/BF02760522},
}

\bib{slutsky}{article}{
   author={Slutsky, Konstantin},
   title={Automatic continuity for homomorphisms into free products},
   journal={J. Symbolic Logic},
   volume={78},
   date={2013},
   number={4},
   pages={1288--1306},
   issn={0022-4812},
   review={\MR{3156525}},
}

\bib{specker}{article}{
   author={Specker, Ernst},
   title={Additive Gruppen von Folgen ganzer Zahlen},
   language={German},
   journal={Portugal. Math.},
   volume={9},
   date={1950},
   pages={131--140},
   issn={0032-5155},
   review={\MR{0039719}},
}

\bib{solecki}{article}{
   author={Solecki, S\l awomir},
   title={Actions of non-compact and non-locally compact Polish groups},
   journal={J. Symbolic Logic},
   volume={65},
   date={2000},
   number={4},
   pages={1881--1894},
   issn={0022-4812},
   review={\MR{1812189}},
   doi={10.2307/2695084},
}

\bib{solecki-isom}{article}{
   author={Solecki, S\l awomir},
   title={Extending partial isometries},
   journal={Israel J. Math.},
   volume={150},
   date={2005},
   pages={315--331},
   issn={0021-2172},
   review={\MR{2255813}},
   doi={10.1007/BF02762385},
}

\bib{solovay}{article}{
   author={Solovay, Robert M.},
   title={A model of set-theory in which every set of reals is Lebesgue
   measurable},
   journal={Ann. of Math. (2)},
   volume={92},
   date={1970},
   pages={1--56},
   issn={0003-486X},
   review={\MR{0265151}},
   doi={10.2307/1970696},
}

\bib{steinhaus}{article}{
    AUTHOR = {Steinhaus, Hugo},
     TITLE = {Sur les distances des points dans les ensembles de mesure positive},
   JOURNAL = {Fund. Math.},
    VOLUME = {1},
      YEAR = {1920},
     PAGES = {93--104}
}


\bib{tao}{book}{
   author={Tao, Terence},
   title={Hilbert's fifth problem and related topics},
   series={Graduate Studies in Mathematics},
   volume={153},
   publisher={American Mathematical Society, Providence, RI},
   date={2014},
   pages={xiv+338},
   isbn={978-1-4704-1564-8},
   review={\MR{3237440}},
   doi={10.1090/gsm/153},
}

\bib{thomas}{article}{
   author={Thomas, Simon},
   title={Infinite products of finite simple groups. II},
   journal={J. Group Theory},
   volume={2},
   date={1999},
   number={4},
   pages={401--434},
   issn={1433-5883},
   review={\MR{1718758}},
   doi={10.1515/jgth.1999.029},
}

\bib{thomaszapletal}{article}{
   author={Thomas, Simon},
   author={Zapletal, Jind\v rich},
   title={On the Steinhaus and Bergman properties for infinite products of
   finite groups},
   journal={Confluentes Math.},
   volume={4},
   date={2012},
   number={2},
   pages={1250002, 26},
   issn={1793-7442},
   review={\MR{2982769}},
   doi={10.1142/S1793744212500028},
}

\bib{tsankov}{article}{
   author={Tsankov, Todor},
   title={Automatic continuity for the unitary group},
   journal={Proc. Amer. Math. Soc.},
   volume={141},
   date={2013},
   number={10},
   pages={3673--3680},
   issn={0002-9939},
   review={\MR{3080189}},
   doi={10.1090/S0002-9939-2013-11666-7},
}


\bib{Urysohn1}{article}{
    author = {Urysohn, P. S.},
    title = {Sur un espace m\'etrique universel},
    journal = {C. R. Acad. Sci. Paris} ,
    year = {1925},
    VOLUME = {180},
    PAGES = {803--806},
}

\bib{Urysohn2}{article}{
    author = {Urysohn, P.},
    title = {Sur un espace m\'etrique universel},
    journal = {Bull. Sci. Math.} ,
    year = {1927},
    VOLUME = {51},
    PAGES = {43–-64 and 74-–90},
}

\bib{uspenskii}{article}{
   author={Uspenskij, Vladimir V.},
   title={On subgroups of minimal topological groups},
   journal={Topology Appl.},
   volume={155},
   date={2008},
   number={14},
   pages={1580--1606},
   issn={0166-8641},
   review={\MR{2435151}},
   doi={10.1016/j.topol.2008.03.001},
}


\bib{weil}{book}{
    AUTHOR = {Weil, Andr\'{e}},
     TITLE = {L'int\'{e}gration dans les groupes topologiques et ses
              applications},
    SERIES = {},
    VOLUME = {No. 869.},
      NOTE = {[This book has been republished by the author at Princeton, N.
              J., 1941.]},
 PUBLISHER = {Hermann \& Cie, Paris},
      YEAR = {1940},
     PAGES = {158}

}

\bib{wesolek}{article}{
   author={Wesolek, Phillip},
   title={Conjugacy class conditions in locally compact second countable
   groups},
   journal={Proc. Amer. Math. Soc.},
   volume={144},
   date={2016},
   number={1},
   pages={399--409},
   issn={0002-9939},
   review={\MR{3415606}},
   doi={10.1090/proc/12645},
}

\end{biblist}
\end{bibdiv}

\end{document}